\documentclass[12pt]{amsart}
\usepackage{amssymb}
\usepackage{verbatim}
\usepackage[toc,page]{appendix}
\usepackage{mathrsfs}
\usepackage{mathtools}

\usepackage{xcolor}

\newtheorem{thm}{Theorem}[section]

\newtheorem{lem}[thm]{Lemma}
\newtheorem{rem}[thm]{Remark}

\theoremstyle{definition}

\numberwithin{equation}{section}
\newcommand{\Mod}[1]{\ (\textup{mod}\ #1)}

\newcommand{\Zag}{\mathscr{L}}% Zagier L-series
\newcommand{\MT}{\mathbf{MT}} % main term
\newcommand{\SZE}{\Sigma_{1}}%sum over n<2l
\newcommand{\SIN}{\Sigma_{2}}%sum over n>2l
\newcommand{\MTn}{\Sigma_{0}}%main term n=2l
\newcommand{\Resid}{\mathcal{R}}

\newcommand{\ups}{\upsilon}% coef of Zagier/zeta
\newcommand{\G}{\mathcal{G}}
\newcommand{\Hf}{\mathcal{H}}
\newcommand{\Cc}{\mathcal{C}}% coef in main term
\newcommand{\MAP}{\mathcal{M}}% arith part in main term
\newcommand{\ES}{\mathcal{E}}%sum over e of SG
\newcommand{\TM}{\vartheta_k}

\newcommand{\M}{\mathcal{M}}

\providecommand{\sgn}{\operatorname{sgn}}

\providecommand{\sym}{\operatorname{sym}}
\providecommand{\arcosh}{\operatorname{arcosh}}

\DeclareMathOperator{\res}{res}

\newcommand{\GenHyG}[5]{ {}_{#1}F_{#2} \left( \begin{matrix} #3 \\ #4 \end{matrix} ; #5 \right) }

\mathtoolsset{showonlyrefs}
\begin{document}

\title[Moments and Non-Vanishing of Maass Form Symmetric Square $L$-Functions]{Moments and Non-Vanishing of Maass Form Symmetric Square $L$-Functions in Short Intervals}

\begin{abstract}
Recently, Li obtained a mean Lindel\"of estimate for the cubic moment of the central values of Maass form symmetric square $L$-functions over short intervals $(T-H, T+H)$ of length $H \ge T^{18/19+\epsilon}$. We improve this result by showing that the estimate holds for $H \gg T^{6/7+\epsilon}$. The key ingredient in our proof is a new asymptotic formula for the second twisted moment of Maass symmetric square $L$-functions. Based on this formula, we also improve the lower bound for the proportion of non-vanishing central $L$-values in short intervals. Previously, even under the assumption of the Lindel\"of hypothesis for Dirichlet $L$-functions, the proportion of non-vanishing values in intervals of length $H = T^{\beta}$ was only known to be at least $\frac{3\beta-1}{4}$. We establish an unconditional lower bound of $\frac{7\beta-2}{8}$.

\end{abstract}

\author{Olga  Balkanova}
\address{Steklov Mathematical Institute of Russian Academy of Sciences, 8 Gubkina st., Moscow, 119991, Russia}
\email{balkanova@mi-ras.ru}

\author{Dmitry Frolenkov}
\address{HSE University and Steklov Mathematical Institute of Russian Academy of Sciences, 8 Gubkina st., Moscow, 119991, Russia}
%\thanks{The work was supported by the Theoretical Physics and Mathematics Advancement Foundation BASIS}
\email{frolenkov@mi-ras.ru}

\keywords{L-functions; non-vanishing; Voronoi summation formula; Gauss hypergeometric function}
\subjclass[2010]{Primary:  11F12, 11L05, 11M06}

\maketitle

%\tableofcontents

%%%%%%%%%%%%%%%%%%%%%%%%%%%%%%%%%%%%%%%%%%%%%%%%%%%%%%%%%%%%%%%%%%%%%%%%%%%%%%%%%%%%%%%%%%%%%%%%%%%%%%%%%%%%%%%%%%%%%%%%%%%%%%%%%%
%%%%%%%%%%%%%%%%%%%%%%%%%%%%%%%%%%%%%%%%%%%%%%%%%%%%%%%%%%%%%%%%%%%%%%%%%%%%%%%%%%%%%%%%%%%%%%%%%%%%%%%%%%%%%%%%%%%%%%%%%%%%%%%%%%

\section{Introduction}
The study of moments of symmetric square $L$-functions associated with automorphic forms has a rich history spanning the past thirty years, originating with the seminal papers of Luo--Sarnak \cite{LuoSarnakPG} and Iwaniec--Michel \cite{IwMi2001}. In these works, they obtained an upper bound of the correct order of magnitude for the second moment of symmetric square $L$-functions for Maass forms in the spectral aspect and holomorphic cusp forms in the level aspect, respectively. Although the result of Iwaniec--Michel remains unimproved, progress in other directions -- specifically, the weight aspect with an additional average over the weight, and the spectral aspect -- has been more promising. This discrepancy arises because the ratio $\log(\text{conductor}) : \log(\text{size of the family})$ equals $1$ in these cases, whereas it equals $2$ in the level aspect considered by Iwaniec and Michel. The averaged weight and spectral aspects share significant commonalities; heuristically, a breakthrough in one setting often translates to a similar result in the other. In the weight aspect, Khan \cite[Theorem 3.1]{Khan2010} obtained an asymptotic formula for the averaged twisted second moment
\begin{equation}\label{twisted 2mom WA}
\sum_{k}h\left(\frac{k}{K}\right)\sum_{f \in H_{2k}}^{h}\lambda_f(r^2)L^2(\sym^2f,1/2)=\frac{K}{\sqrt{r}}P_3(\log K)+O(r^{1/2}K^{\epsilon}), \quad r\ll K^{1-\epsilon}.
\end{equation}
The condition $r \ll K^{1-\epsilon}$ enabled Khan to obtain a non-vanishing proportion of $70.37\%$, but this range is too narrow to yield an asymptotic formula for the cubic moment. The case of the third moment was subsequently considered by Das and Khan in \cite[Theorem 1.2]{DasKhan}, where they proved the asymptotic formula
\begin{equation}\label{3mom WA}
\sum_{k}h\left(\frac{k}{K}\right)\sum_{f \in H_{2k}}^{h}L^3(\sym^2f,1/2)=KP_6(\log K)+O(K^{1/2+\epsilon}).
\end{equation}
Recently, the authors \cite[Theorem 1.2]{BF2momWA2026} improved Khan's second-moment result by establishing the following asymptotic formula for $r \ll K^{5/4-\epsilon}$:
\begin{equation}\label{twisted 2mom WA new}
\sum_{k}h\left(\frac{k}{K}\right)\sum_{f \in H_{2k}}^{h}\lambda_f(r^2)L^2(\sym^2f,1/2)=\frac{K}{\sqrt{r}}P_3(\log K)+O\left(\frac{r^{3/2}}{K^{3/2-\epsilon}}+\frac{(rK)^{\epsilon}}{\sqrt{r}}\right).
\end{equation}
This enables one to derive \eqref{3mom WA} directly from the twisted second moment. Heuristically, one can represent a single factor of $L(\sym^2 f, 1/2)$ via the approximate functional equation; the resulting contribution of the error term is then
\begin{equation}\label{3mom WA2}
\sum_{r\ll K^{1+\epsilon}}\frac{1}{\sqrt{r}}\left(\frac{r^{3/2}}{K^{3/2-\epsilon}}+\frac{(rK)^{\epsilon}}{\sqrt{r}}\right)\ll K^{1/2+\epsilon}.
\end{equation}
It is natural to expect that an asymptotic formula similar to \eqref{3mom WA} also holds for Maass forms, which constitutes one of the main results of the present paper. Nevertheless, in the context of Maass forms, it is more common to investigate moments over short intervals. In this direction, our goal is to determine the smallest window size $G$ for which the following mean Lindel\"of estimate holds:
\begin{equation}\label{Maass n-mom}
\sum_{|t_j-T|\ll G}\alpha_{j}L(\sym^2 u_{j},1/2)^n\ll TG^{1+\epsilon}.
\end{equation}
Here, $u_j$ is a Hecke--Maass cusp form for the full modular group with Laplace eigenvalue $\kappa_{j}=1/4+t_{j}^2$. In the case of $n=2$, the current record $G\gg T^{1/5+\epsilon}$ is due to Khan and Young \cite[Theorem 1.3]{KhYoung} (see also \cite{Frolsym2} for an alternative proof). In the case of $n=3$, Li \cite[Theorem 1.1]{Li2025} established \eqref{Maass n-mom} for $G \gg T^{18/19+\epsilon}$.

 For $G \ll T^{1-\epsilon}$, let
\begin{equation}\label{Mn def}
\M_n(r,1/2):=\sum_{j}e^{-(t_j-T)^2/G^2}\alpha_{j}\lambda_{j}(r^2)L(\sym^2 u_{j},1/2)^n.
\end{equation}
The main result of the paper is the following theorem.
%%%%%%%%%%%%%%%%%%%%%%%%%%%%%%%%%%%%%%%%%%%%%%%%%%%%%%55
\begin{thm}\label{main thm 3mom}
We have the asymptotic formula
\begin{equation}\label{3momAF0}
\M_3(1,1/2)
=\int_0^{\infty}te^{-(t-T)^2/G^2}P_6(\log t)dt+
O\left(\frac{T^{4+\epsilon}}{G^{5/2}}\right),
\end{equation}
which implies that for $G \gg T^{6/7+\epsilon}$,
\begin{equation}\label{mean Lindelof}
\sum_{T<t_j\le T+G}|L(\sym^2 u_{j},1/2)|^3\ll T^{1+\epsilon}G.
\end{equation}
\end{thm}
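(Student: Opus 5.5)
Following the heuristic \eqref{3mom WA2}, I would deduce the cubic moment from an asymptotic formula for the twisted second moment $\M_2(r,1/2)$. The paper announces this formula as its key ingredient and proves it later; I take it in the expected shape
\begin{equation}
\M_2(r,1/2)=\frac{1}{\sqrt r}\int_0^\infty t e^{-(t-T)^2/G^2}P_{3}(\log t,\log r)\,dt+O\Bigl(\mathcal{E}(r,T,G)\Bigr),
\end{equation}
valid for $r$ up to about $T^{2+\epsilon}$. Here $\mathcal{E}$ is an explicit error term growing polynomially in $r$.

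\textbf{Step 1: approximate functional equation.} Since the analytic conductor of $L(\sym^2 u_j,s)$ is of size $t_j^2$, I write
\begin{equation}
L(\sym^2 u_j,1/2)=2\sum_{n}\frac{\lambda_j(n^2)}{n^{1/2}}V(n/t_j),
\end{equation}
with $n$ effectively restricted to $n\ll T^{1+\epsilon}$. The Dirichlet series of $L(\sym^2 u_j,s)$ is $\zeta(2s)\sum_n\lambda_j(n^2)n^{-s}$, so after factoring out the $\zeta(2s)$ part the coefficients are exactly $\lambda_j(n^2)$. The weight $V(n/t_j)$ depends on $t_j$. Because $G\ll T^{1-\epsilon}$ and the Gaussian localises $t_j=T+O(G\log T)$, I would separate variables by a Mellin integral, writing $V(n/t_j)=\frac{1}{2\pi i}\int \tilde V(s)(t_j/n)^s ds$ with $\Re s=\epsilon$. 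I then replace $(t_j/T)^s$ by $1$ plus a Taylor error. Alternatively, that factor can be absorbed into a slightly modified Gaussian weight, to which the twisted second moment formula applies equally.

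\textbf{Step 2: main term.} This gives
\begin{equation}
\M_3(1,1/2)=2\sum_{n\ll T^{1+\epsilon}}\frac{1}{\sqrt n}\M_2(n,1/2)\text{-type sums}.
\end{equation}
Inserting the main term of the twisted second moment produces $\sum_n n^{-1}P_3(\log t,\log n)V(n/t)$ inside the $t$-integral. Shifting the Mellin contour past $s=0$ gives a pole of order $5$ coming from the Dirichlet series $\sum_n n^{-1-s}$ times the log-polynomial. Its residue gives $\int t e^{-(t-T)^2/G^2}P_6(\log t)\,dt$. The contribution of the shifted contour is $O(T^{-A})$ relative to this, up to $O(G T^{\epsilon})$.

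\textbf{Step 3: error terms and the main obstacle.} The real work is bounding
\begin{equation}
\sum_{n\ll T^{1+\epsilon}}n^{-1/2}\mathcal{E}(n,T,G).
\end{equation}
To obtain $T^{4+\epsilon}G^{-5/2}$, I need the twisted second moment error to be of the form
\begin{equation}
\mathcal{E}\ll \frac{r^{3/2}T^{3/2+\epsilon}}{G^{5/2}}+\frac{T^{1+\epsilon}}{\sqrt r}\cdot(\text{smaller terms}),
\end{equation}
which is the spectral analogue of \eqref{twisted 2mom WA new}. Summing $r^{-1/2}\cdot r^{3/2}T^{3/2}/G^{5/2}$ over $r\le T$ gives exactly $T^{4}/G^{5/2}$. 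Establishing this uniform-in-$r$ formula is the main obstacle. It requires the Kuznetsov formula, Voronoi summation on the Kloosterman/Zagier $L$-series side, and careful asymptotics of the resulting hypergeometric integrals in the transition range $r\approx T^{1+\epsilon}$. Here I also have to check that the off-diagonal main terms combine into the polynomial $P_3$. I would treat this as the cited later result and only verify that its range covers $r\ll T^{1+\epsilon}$.

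\textbf{Step 4: deducing \eqref{mean Lindelof}.} First, $L(\sym^2 u_j,1/2)\ge 0$ (a known nonnegativity result), so the absolute values can be dropped. Second, $\alpha_j\gg t_j^{-\epsilon}$ by the standard Hoffstein–Lockhart/Iwaniec lower bound for $L(\sym^2 u_j,1)$. Third, the Gaussian weight is $\gg1$ on $[T,T+G]$. Together these give
\begin{equation}
\sum_{T<t_j\le T+G}L(\sym^2 u_j,1/2)^3\ll T^{\epsilon}\M_3(1,1/2)\ll T^{1+\epsilon}G+T^{4+\epsilon}G^{-5/2}.
\end{equation}
The second term is at most $TG$ precisely when $G^{7/2}\ge T^{3}$, that is, $G\gg T^{6/7+\epsilon}$.
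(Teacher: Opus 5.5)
Your outline is the paper's: the approximate functional equation for one factor, reduction to $\M_2(r;h(\cdot)V(r,\cdot))$ for $r\ll T^{1+\epsilon}$, summation of the errors, a residue computation for the main term, and then nonnegativity with $\alpha_j\gg t_j^{-\epsilon}$ for the mean Lindel\"of bound. But both quantitative steps are wrong as written.

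In Step 3, $\sum_{r\le T}r^{-1/2}\cdot r^{3/2}T^{3/2}G^{-5/2}\asymp T^{7/2}G^{-5/2}$, not $T^{4}G^{-5/2}$. An error term of the shape you ask for would give the mean Lindel\"of bound already for $G\gg T^{5/7+\epsilon}$, which contradicts your Step 4. What Theorem~\ref{thm:2mom average} actually supplies is $O\bigl(T^{\epsilon}\max(G,T^{2/3})+T^{2+\epsilon}r^{3/2+\epsilon}G^{-5/2}\bigr)$. It is the factor $T^{2}$ that produces $\sum_{r\ll T^{1+\epsilon}}r^{-1/2}\,T^{2}r^{3/2}G^{-5/2}\ll T^{4+\epsilon}G^{-5/2}$. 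The first term contributes only $T^{1/2+\epsilon}\max(G,T^{2/3})$, which is smaller for $G\ll T^{1-\epsilon}$.

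The more substantive problem is Step 2. The main term of $\M_2(r)$ is not $r^{-1/2}P_3(\log t,\log r)$. It carries the arithmetic sum $\sum_{e\mid r^2}e_1^{-1/2}(\cdots)$, with $e=e_1e_2^2$ and entries $\log(te_2/r)$ and $\log(t/e_1e_2)$. With your shape, a term $\log^a t\,\log^b r$ ($a+b\le 3$) leads to $t^s\zeta(1+2s)\zeta^{(b)}(1+s)s^{-1}$. That is a pole of order $b+3$, hence a polynomial of degree at most $a+b+2\le 5$ in $\log t$, so ``its residue gives $P_6$'' does not follow.

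The missing point is that the $e_2$-sum behaves like a divisor function. After replacing $r$ by $re_1e_2$ one gets
\[
\sum_{e_1,e_2,r\ge1}\frac{|\mu(e_1)|}{e_1^{3/2+v+z}e_2^{1+v+z}r^{1+v}}=\zeta(1+v+z)\zeta(1+v)\frac{\zeta(3/2+v+z)}{\zeta(3+2v+2z)},
\]
and the extra factor $\zeta(1+v+z)$ is exactly what lifts the degree to six. The paper keeps the $z$-residue as an integral over a small circle and shifts the $v$-line coming from $V$ to $\Re v=-1/2+\epsilon$. That shift costs $O(T^{1/2+\epsilon}G)$, not $O(T^{-A})$, because of $\zeta(3/2+v+z)$. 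It then collects the residues at $v=0$ and at $v=-z$ (the latter gives only $P_1(\log t)$). Finally it checks that the coefficient of $\log^6 t$ is $\frac{2}{45\pi^2}\frac{\zeta(3/2)}{\zeta(3)}\neq 0$.

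In Step 1, take your second option, which is what the paper does: apply the twisted second moment directly with weight $h(\cdot)V(r,\cdot)$. Simply replacing $(t_j/T)^s$ by $1$ leaves a remainder of size $G/T$ times a cubic-type moment. Using only the second moment and convexity, that is about $T^{1/2}G^{2}$, which exceeds $T^{4}G^{-5/2}$ once $G>T^{7/9}$.
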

%%%%%%%%%%%%%%%%%%%%%%%%%%%%%%%%%%%%%%%%%%%%%%%%%%%%%%%%%%%%%%5
This result is an immediate consequence of our new asymptotic formula for the twisted second moment, which serves as the Maass form analogue of \eqref{twisted 2mom WA new} (cf. \cite[Theorem 1.2]{BF2momWA2026}).

%%%%%%%%%%%%%%%%%%%%%%%%%%%%%%%%%%%%%%%%%%%%%%%
\begin{thm}\label{thm:2mom average}
We have the asymptotic formula
\begin{multline}\label{2momAF0}
\M_2(r,1/2)
=\frac{2}{\pi^2}\int_0^{\infty}te^{-(t-T)^2/G^2}\sum_{e|r^2}
\frac{1}{\sqrt{re_1}}\Biggl(
\frac{1}{2}\log\frac{te_2}{r}\log^2\frac{t}{e_1e_2}-\frac{1}{6}\log^3\frac{t}{e_1e_2}+\\+
\log\frac{te_2}{r}P_1(\log\frac{t}{e_1e_2})+P_2(\log\frac{t}{e_1e_2})\Biggr)dt+
O\left(T^{\epsilon}\max(H,T^{2/3})+
\frac{T^{2+\epsilon}r^{3/2}}{G^{5/2}}
\right),
\end{multline}
where $P_j(x)$ denotes a polynomial of degree $j$, and $e=e_1e_2^2$ with $e_1$ square-free.

\end{thm}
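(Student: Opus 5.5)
We prove Theorem \ref{thm:2mom average} through the Kuznetsov trace formula, using an exact expression for the twisted first moment of $L(\sym^2 u_j,s)$ in terms of Zagier $L$-series. The plan has three stages.

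First, we write one factor $L(\sym^2 u_j,1/2)$ via an approximate functional equation of length about $t_j^{1+\epsilon}$. Then
\begin{equation*}
\M_2(r,1/2)=\sum_{n}\frac{V(n/T)}{\sqrt n}\,\M_1^{*}(r,n),
\end{equation*}
where $\M_1^{*}(r,n)$ is a twisted first moment. The twist is $\lambda_j(r^2)\lambda_j(n^2)$, which we expand by the Hecke relations as a sum of $\lambda_j(\cdot)$ terms. The weight $V$ depends mildly on $t_j$; we treat that dependence by Mellin inversion, so the weight factors out.

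Second, we evaluate each twisted first moment of $L(\sym^2 u_j,s)$ by Kuznetsov. The diagonal term gives a main term. The off-diagonal Kloosterman terms are rewritten, via the standard identity of Zagier type, as a sum of Zagier $L$-series $\Zag_{n^2-4 l}(s)$ against a special function given by a Gauss hypergeometric integral. Summing over $n$ splits naturally into three ranges:
\begin{itemize}
\item $n=2l$, which is $\MTn$;
\item $n<2l$, which is $\SZE$;
\item $n>2l$, which is $\SIN$.
\end{itemize}
Together with the diagonal, the term $\MTn$ (involving $\zeta$) supplies the full main term, the cubic polynomial in logarithms with divisor sum over $e\mid r^2$. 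Extracting it requires shifting contours and collecting residues. We expect the $P_3$ structure to come from a triple pole, combining the double pole of $\zeta(2s)\zeta(\cdot)$-type factors with the $\log$ from the length of the approximate functional equation.

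Third, we bound the remaining sums, and this is the main obstacle. For the Gaussian weight of width $G$, the special function is negligible unless $n$ lies in a range of about $r\,T^{\epsilon}/G$ near $2l$, or in the oscillatory regime. We would apply uniform asymptotics of the hypergeometric function (a Liouville–Green type analysis) to extract the oscillating factor. We then use Voronoi-type summation in $l$, in the spirit of \cite{BF2momWA2026}, combined with subconvexity or second-moment bounds for $\Zag_{n^2-4l}(1/2+it)$ in $n$. The error $T^{2+\epsilon}r^{3/2}G^{-5/2}$ should arise from the range $|n-2l|\ll rT/G$ after estimating $\Zag$ on average by a large sieve for quadratic characters. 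The term $T^{\epsilon}\max(H,T^{2/3})$ should come from the continuous spectrum and the Eisenstein contribution, and from the small-$n$ tail, handled by the Weyl-type bound for $\zeta$ on the line.

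The crux is achieving the $G^{-5/2}$ saving. We would obtain it by integrating by parts in $t$ against $e^{-(t-T)^2/G^2}$ twice and a half, via stationary phase in the Mellin variable, before summing over $n$ and $l$.
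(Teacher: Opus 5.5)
\textbf{There is a genuine gap: you assign the main term to the wrong pieces.} You take it to come from the diagonal together with $\MTn$, and treat $\SZE$, $\SIN$ purely as error terms. In the paper, $\MTn$ is the $CT+ET$ part of the reciprocity formula (continuous spectrum and $\Zag_{-4l^2}(1/2)$). It is only an error term $O(T^{\epsilon}\max(G,T^{2/3}))$.

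By contrast, $\SZE$ and $\SIN$ carry a main term of full size once $r\gg T^{1-\epsilon}$. That is exactly the range the theorem is needed for: the cubic-moment theorem sums over $r\ll T^{1+\epsilon}$, and the non-vanishing theorem takes $r=T^{2\Delta}$ up to nearly $T^{5/4}$. This main term comes from the terms with $0<x-2\ll T^{-2+\epsilon}$ in $\SIN$ (and $0<2-x\ll T^{-2+\epsilon}$ in $\SZE$), i.e.\ $n\ll mrT^{-2+\epsilon}/e_2$. That range is nonempty only when $r\gg T^{1-\epsilon}$. There $I(m,x)$ does not oscillate and has size $T^{1+\epsilon}G|\log(T\sqrt{x-2})|$, so no stationary phase, integration by parts in $t$, or large sieve can make it small.

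You can see from the answer itself that this term is not negligible. The diagonal equals $\frac{2}{\pi^2\sqrt r}\int_0^\infty th(t)\tanh(\pi t)\frac{1}{2\pi i}\int_{(a)}H(z)\frac{dz}{z}\,dt$. For divisors with $e_1e_2\gg T^{1+\epsilon}$ (e.g.\ $e=r^2$), shifting the $z$-contour to the right shows their diagonal contribution is negligible. Yet the stated main term assigns them $\frac{1}{\sqrt{re_1}}$ times a non-vanishing cubic in $\log\frac{t}{e_1e_2}$ and $\log\frac{te_2}{r}$. So your plan can at best give the theorem for $r\ll T^{1-\epsilon}$, where that range is empty and the diagonal contour can be moved left onto the residue at $z=0$.

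\textbf{The missing idea is the off-diagonal main term and how it combines with the diagonal.} The paper proceeds as follows:
\begin{itemize}
\item It substitutes $q=4mr/e_2\pm n$ and then $l=qn$, and detects $l\equiv n^2\Mod{4rn/e_2}$ with additive characters.
\item It applies the Voronoi formula for Zagier's $L$-series. The dual sums are small (via the bounds for $\widehat{g_3}$, $\widehat{g_1}$), but the polar terms $\Resid(\cdot\,;c)$ survive.
\item It computes the resulting integrals $I_{>,0}(t,z)$, $I_{<,0}(t,z)$ in closed form, sums the arithmetic factors, and uses the functional equation of $\zeta$. This turns the off-diagonal contribution into $\frac{2}{\pi^2\sqrt r}\int th(t)\frac{1}{2\pi i}\int_{(a)}H(-z)\frac{dz}{z}\,dt$.
\end{itemize}
Neither piece alone is a polynomial in logarithms. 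Their sum integrates $(H(z)+H(-z))/z$ and collapses to $\res_{z=0}H(z)/z$. That is a pole of order four, not three, and this is what produces the cubic.

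\textbf{Your explanation of the $G^{-5/2}$ saving is also not the actual mechanism.} In the oscillatory range the Gaussian forces $x-2\ll T^{\epsilon}G^{-2}$, i.e.\ $L\gg n^2G^{2-\epsilon}$. There the Voronoi polar terms are negligible, and $\widehat{g_2}\ll T^{\epsilon}L^{-1/4}$ on the window $\sqrt m\asymp Tnc/L$. This gives $\Sigma_{2,2}^{(j)}\ll T^{1+\epsilon}n(rn/e_2)^{3/2}L^{-5/4}$, and $L^{-5/4}\ll (nG)^{-5/2+\epsilon}$ is where $G^{-5/2}$ comes from.
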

%%%%%%%%%%%%%%%%%%%%%%%%%%%%%%%%%%%%%%%%%%%%%%%%%%%%%%%%%5
Another classical application of the asymptotic formula for the twisted second moment is the non-vanishing problem for $L(\sym^2 u_{j},1/2)$. Since Theorem \ref{thm:2mom average} improves upon \cite[Theorem 1.2]{BF2mom}, we are able to increase the non-vanishing proportion established in \cite[Theorem 1.1]{BF2mom}. This result is detailed in the following theorem.

%%%%%%%%%%%%%%%%%%%%%%%%%%%%%%%%%%%
\begin{thm}\label{thm:nonvan}
For any $\epsilon>0$ and sufficiently large $T$ we have
\begin{equation}\label{nonvan 1}
\sum_{\substack{T\leq t_j\leq T+H\\ L(\sym^2 u_{j},1/2)\neq 0}}\alpha_j\geq \left( 1-\frac{1}{(1+\Delta)^3}-\epsilon\right)
\sum_{\substack{\\T\leq t_j\leq T+H}}\alpha_j,
\end{equation}
where $H=T^{\beta}$ with $2/7<\beta<1$, and $\Delta<\frac{7\beta-2}{8}$.
\end{thm}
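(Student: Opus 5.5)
We will use the mollifier method, with Theorem \ref{thm:2mom average} supplying the second moment, following the scheme of \cite[Theorem 1.1]{BF2mom}. Let
\begin{equation}
M(u_j)=\sum_{r\le T^{\Delta}} \frac{x_r}{\sqrt r}\lambda_j(r^2)
\end{equation}
be a mollifier with $x_r=\mu(r)P(\log(T^{\Delta}/r)/\log T^{\Delta})$-type coefficients, where $P$ is a polynomial with $P(0)=0$, possibly corrected by the Hecke relations for $\lambda_j(r^2)$. By Cauchy--Schwarz,
\begin{equation}
\sum_{\substack{T\le t_j\le T+H\\ L\neq0}}\alpha_j\ \ge\ \frac{\bigl(\sum\alpha_j L(\sym^2u_j,1/2)M(u_j)\bigr)^2}{\sum\alpha_j L(\sym^2u_j,1/2)^2M(u_j)^2},
\end{equation}
where the sums run over $T\le t_j\le T+H$. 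We first pass to the smooth Gaussian weights $e^{-(t_j-T')^2/G^2}$ and average over centres $T'$ in $[T,T+H]$, with $G=T^{\beta-\epsilon}$ much smaller than $H$; this recovers the sharp interval up to negligible edge effects.

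For the denominator, we expand $M^2$ and linearize $\lambda_j(r_1^2)\lambda_j(r_2^2)$ by the Hecke relation into a combination of $\lambda_j(r^2)$ with $r\le T^{2\Delta}$. This reduces the denominator to a linear combination of the twisted moments $\M_2(r,1/2)$, and Theorem \ref{thm:2mom average} applies to each. The error contribution is
\begin{equation}
\sum_{r\le T^{2\Delta}}\frac{|x_{r_1}x_{r_2}|}{\sqrt{r_1r_2}}\Bigl(T^{\epsilon}\max(H,T^{2/3})+\frac{T^{2+\epsilon}r^{3/2}}{G^{5/2}}\Bigr)\ll T^{\epsilon}\Bigl(T^{\Delta}\max(H,T^{2/3})+\frac{T^{2+4\Delta}}{G^{5/2}}\Bigr),
\end{equation}
since the off-diagonal $r=r_1r_2$ contributes $\sum r^{3/2}/r\approx T^{2\Delta\cdot 3/2}\cdot T^{\Delta}$. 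Integrating over centres multiplies this by $H/G$, while the main term has size $\asymp TH$. The requirement $T^{2+4\Delta}/G^{5/2}\ll T^{1-\epsilon}G$ gives $T^{1+4\Delta}\ll G^{7/2}$, that is, $\Delta<(7\beta-2)/8$. This is exactly the stated range, and $\Delta>0$ forces $\beta>2/7$. The first error term $T^{\Delta}\max(H,T^{2/3})$ is smaller than $TH$ under this constraint, since $\Delta<1$; more precisely we use $\Delta<(7\beta-2)/8<\beta/\ldots$, which is checked directly.

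The first moment in the numerator, $\sum\alpha_j\lambda_j(r^2)L(\sym^2u_j,1/2)$, follows from the known twisted first moment with a much wider range, or from the same method with one fewer $L$-factor. It poses no restriction.

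The main terms then yield the quadratic forms in $P$ familiar from Khan's weight-aspect computation \cite{Khan2010}. The ratio reduces to a variational problem whose optimum is $1-(1+\Delta)^{-3}$, the cube reflecting the degree-3 nature of $L(\sym^2,1/2)$ in the log-structure of \eqref{2momAF0}. We optimize with $P(x)$ a suitable polynomial, as in \cite{BF2mom}, reusing that computation verbatim with the new admissible $\Delta$.

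The main obstacle is the precise bookkeeping of the main-term sum over $e\mid r^2$ with $e=e_1e_2^2$ after Hecke linearization. The Euler-product and Mellin contour analysis must produce the quadratic form with the correct leading $\log^3$ structure. Since that computation is identical to \cite{BF2mom}, the only genuinely new input is the improved error term, i.e.\ the exponent count above.
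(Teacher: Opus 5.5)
Your proposal is correct and follows the paper's route: the mollifier argument of \cite[Theorem 1.1]{BF2mom} is reused verbatim with Theorem \ref{thm:2mom average} as the only new input. The admissible range comes from requiring the error $T^{2+\epsilon}r^{3/2}/G^{5/2}$ at $r=T^{2\Delta}$ to be smaller than the main term $TG/\sqrt{r}$, which, after multiplying through by $T^{\Delta}$, is exactly your comparison $T^{2+4\Delta}/G^{5/2}\ll T^{1-\epsilon}G$ giving $\Delta<(7\beta-2)/8$; your garbled aside on the first error term only needs $\Delta<\beta+1/3$ when $G<T^{2/3}$, which holds automatically.
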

%%%%%%%%%%%%%%%%%%%%%%%%%%%%%%%%%%%
Previously, it was shown in \cite[Theorem 1.1]{BF2mom} that the inequality \eqref{nonvan 1} holds for (see \cite[(1.11)]{BF2mom})
\begin{equation}\label{Delta est1/6}
\Delta < \max\left(\beta - \frac{1}{2}, \frac{11\beta - 5}{16}\right),
\end{equation}
and conditionally on the Lindel\"of hypothesis for Dirichlet $L$-functions for
\begin{equation}\label{Delta est0}
\Delta < \frac{3\beta - 1}{4}.
\end{equation}
It is straightforward to see that our new unconditional estimate, $\Delta < \frac{7\beta - 2}{8}$, improves even the conditional bound \eqref{Delta est0}.

The proof of the key Theorem \ref{thm:2mom average} follows the general strategy used to establish \eqref{twisted 2mom WA new} in \cite{BF2momWA2026}. Applying the approximate functional equation to one of the two $L$-functions, we relate the second moment to the first moment. We then apply a reciprocity-type formula to the first moment, expressing it as a sum of the main term and sums of Zagier's $L$-series. This yields a formula for the second moment that consists of a diagonal main term and two double sums of Zagier's $L$-series. To handle these double sums, we transform them into a form amenable to the Voronoi summation formula for Zagier's $L$-series, which extracts the off-diagonal main terms and the error terms. To bound the remaining errors, we combine various techniques from \cite{BF2momWA2026} and \cite{Frolsym2}. 

For large twists, it turns out that evaluating the diagonal and off-diagonal parts of the main term individually is unfeasible. Remarkably, both components can be transformed into a unified structure that enables the evaluation of their sum. Specifically, their sum can be written as a contour integral of an odd function, which reduces to a specific residue at $z = 0$. It is worth mentioning that such a cancellation mechanism is not unprecedented; similar phenomena have previously appeared in the work of Kowalski--Michel--VanderKam \cite{KMV} and Soundararajan \cite{Sound}.

The paper is organized as follows. In Section \ref{sec:Notations}, we collect some standard facts regarding $L(\sym^2 u_{j},s)$, and in Section \ref{sec:1st mom}, we present a reciprocity-type formula for the first twisted moment. Section \ref{sec:2mom to 1mom} relates the second moment to the first moment, yielding an expression for the second moment as a sum of the diagonal main term and two off-diagonal sums. In Section \ref{sec: main term diagonal}, we begin the evaluation of the diagonal component of the main term. Sections \ref{sec:SIN} and \ref{sec:SZE} are devoted to the analysis of the two off-diagonal sums, based on the Voronoi summation formula for Zagier's $L$-series stated in Section \ref{sec:Voronoi}. In these sections, we establish the new estimate for the error term in the asymptotic formula for the twisted second moment and determine the off-diagonal part of the main term. In Section \ref{sec:Thm 2mom}, we complete the proof of Theorem \ref{thm:2mom average} by evaluating the combined sum of the diagonal and off-diagonal main terms, and consequently deduce Theorem \ref{thm:nonvan}. Finally, Section \ref{sec:3mom} establishes the new asymptotic formula for the cubic moment and verifies that the main term involves precisely a polynomial of degree six.

%%%%%%%%%%%%%%%%%%%%%%%%%%%%%%%%%%%%%%%%%%%%%%%%%%%%%%%%%%%%%%%%%%%%%%%%%%%%%%%%%%%%%%%%%%%
\section{Notations and preliminaries}\label{sec:Notations}
We denote by  $\{u_j\}$ an orthonormal basis of the space of Hecke-Maass cusp forms of level one. Let $\kappa_{j}=1/4+t_{j}^2$ (with  $t_j>0$)  be the eigenvalue of the hyperbolic Laplacian acting on $u_{j}$, and $\{\lambda_{j}(n)\}$ be the eigenvalues of Hecke operators acting on $u_{j}$.  Every $u_j$ possesses the following Fourier expansion
\begin{equation*}
u_{j}(x+iy)=\sqrt{y}\sum_{n\neq 0}\rho_{j}(n)K_{it_j}(2\pi|n|y)e(nx),
\end{equation*}
where $K_{\alpha}(x)$ is the $K$-Bessel function and $\rho_{j}(n)=\rho_{j}(1)\lambda_{j}(n).$  The coefficients $\{\lambda_{j}(n)\}$  satisfy (for $m,n\geq 1$) the Hecke identity
\begin{equation}\label{eq:multipFourcoeff2}
\lambda_{j}(n)\lambda_{j}(m)=\sum_{d|(m,n)}\lambda_{j}\left( \frac{nm}{d^2}\right).
\end{equation}
Define
\begin{equation*}
\alpha_j=\frac{|\rho_{j}(1)|^2}{\cosh(\pi t_j)}.
\end{equation*}
It is known \cite[(30)]{HM}, that   $t_j^{-\epsilon}\ll \alpha_j \ll t_j^{\epsilon}$. The  symmetric square $L$-function attached to $u_j$ is defined (for $\Re{s}>1$) by
\begin{equation*}
L(\sym^2 u_{j},s)=\zeta(2s)\sum_{n=1}^{\infty}\frac{\lambda_{j}(n^2)}{n^s}
\end{equation*}
and can be  analytically  continued to the whole complex plane. The following functional equation takes place:
\begin{equation}\label{func.eq}
L_{\infty}(s,t_j)L(\sym^2 u_{j},s)=L_{\infty}(1-s,t_j)L(\sym^2 u_{j},1-s),
\end{equation}
where
\begin{equation}\label{L.infinity}
L_{\infty}(s,t_j)=\pi^{-3s/2}\Gamma\left(\frac{s}{2}\right)\Gamma\left(\frac{s+2it_j}{2}\right)\Gamma\left(\frac{s-2it_j}{2}\right).
\end{equation}
Using  \eqref{func.eq}, we obtain the following approximate functional equation.
%%%%%%%%%%%%%%%%%%%%%%%%%%%%%%%%%%%%%%%%%%%%%%%%%%%%%%%%%%%%%%%%%%%%%%%%%%%%%%
\begin{lem}
We have
\begin{equation}\label{approx.func.eq.}
L(\sym^2 u_{j},1/2)=2\sum_{m=1}^{\infty}\frac{\lambda_j(m^2)}{m^{1/2}}V(m,t_j),
\end{equation}
where for any $y>0$ and $a>0$
\begin{equation}\label{approx.fun.eq.Vdef}
V(y,t_j)=\frac{1}{2\pi i}\int_{(a)}\frac{L_{\infty}(1/2+z,t_j)}{L_{\infty}(1/2,t_j)}\zeta(1+2z)G(z)y^{-z}\frac{dz}{z},
\end{equation}
and
\begin{equation}\label{Gdef}
G(z)=\exp(z^2)P_{2026}(z^2).
\end{equation}
Here $P_{2026}$ is a polynomial of degree 2026 such that $P_{2026}(0)=1$ and  $P_{2026}(1)=P_{2026}((1/2+2j)^2)=0$ for $j=0,\ldots, 2025$. Moreover, for any positive $y,\,t_j, A$ we have
\begin{equation}\label{Vestimate}
V(y,t_j)\ll\left(\frac{t_j}{y}\right)^{A},
\end{equation}
and for $1\le y\ll t_j^{1+\epsilon}$ the following asymptotic formula holds
\begin{multline}\label{Vapproximation}
V(y,t_j)=\frac{1}{2\pi i}\int_{(a)}
\left(\frac{t_j}{\pi^{3/2}y}\right)^z
\frac{\Gamma(1/4+z/2)}{\Gamma(1/4)}\zeta(1+2z)G(z)\\\times
\left(1+\sum_{k=1}^{N-1}\frac{p_{2k}(v)}{t_j^k}\right)\frac{dz}{z}+O(t_j^{-N+\epsilon}),
\end{multline}
where $v=\Im(z)$, and $p_n(v)$ is a polynomial of degree n.
\end{lem}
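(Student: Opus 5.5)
The plan is the standard contour-shift argument built on the functional equation \eqref{func.eq}. Let $\Lambda(s)=L_{\infty}(s,t_j)L(\sym^2 u_j,s)$; it is entire because $u_j$ is a cusp form of level one. Put
\[
I(y)=\frac{1}{2\pi i}\int_{(a)}\frac{\Lambda(1/2+z)}{L_{\infty}(1/2,t_j)}G(z)y^{-z}\frac{dz}{z},
\]
with $a>1/2$, so that the Dirichlet series converges absolutely. The factor $\exp(z^2)$ in $G$ gives rapid decay in vertical strips, which justifies shifting the contour to $\Re z=-a$. The only pole crossed is the simple pole at $z=0$, with residue $L(\sym^2u_j,1/2)$, using $G(0)=1$.

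Next substitute $z\mapsto -z$ in the shifted integral and apply \eqref{func.eq} in the form $\Lambda(1/2-z)=\Lambda(1/2+z)$. Since $G$ is even, the shifted integral becomes $-I(1)$ at $y=1$. Hence $L(\sym^2u_j,1/2)=2I(1)$.

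To identify $I(1)$, expand $L(\sym^2u_j,1/2+z)=\zeta(1+2z)\sum_m\lambda_j(m^2)m^{-1/2-z}$ and integrate termwise, which is legitimate by absolute convergence on $\Re z=a$. This gives \eqref{approx.func.eq.} with $V$ as in \eqref{approx.fun.eq.Vdef}. The statement allows any $a>0$, so I also need to move between different $a>0$. The integrand of $V$ has no poles for $\Re z>0$: $\zeta(1+2z)$ has its pole only at $z=0$, and the ratio of Gamma factors is holomorphic there. So any $a>0$ is admissible.

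For \eqref{Vestimate}, shift the contour to $\Re z=A$ and use Stirling's formula. Writing $\Gamma\bigl(\frac{1/2+z\pm 2it_j}{2}\bigr)$ relative to the value at $z=0$, the ratio is $\ll (1+|t_j|+|z|)^{\Re z}$ up to polynomial factors in $|z|$, and $e^{z^2}$ absorbs those factors. This yields $V\ll (t_j/y)^{A}$, up to constants depending on $A$.

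For \eqref{Vapproximation}, first truncate to $|\Im z|\le t_j^{\epsilon}$; the tails are negligible because of $e^{-(\Im z)^2}$. On the truncated range, expand
\[
\frac{\Gamma\bigl(\frac{1/2+z}{2}+it_j\bigr)\Gamma\bigl(\frac{1/2+z}{2}-it_j\bigr)}{\Gamma\bigl(\frac14+it_j\bigr)\Gamma\bigl(\frac14-it_j\bigr)}
\]
by Stirling's formula with $N$ terms. Its leading term is $|t_j|^{z}$. The corrections come from $\log\Gamma(w+it)$ expanded in powers of $z/t_j$. Combining the conjugate pair makes the odd powers of $t_j^{-1}$ cancel, so the corrections form a series in $t_j^{-k}$ with polynomial coefficients in $z$. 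The factor $\pi^{-3z/2}$ and the remaining factor $\Gamma(1/4+z/2)/\Gamma(1/4)$ are kept exactly.

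The main obstacle is this expansion: matching the claimed form $p_{2k}(v)$, a polynomial of degree $2k$ in $v=\Im z$ alone. My first attempt is to move the contour to $\Re z=a$ with $a$ small (say $a=\epsilon$, or even $a\to0$ after regularizing). The real part then contributes only bounded polynomial factors, which can be re-expanded into polynomials in $v$; degree $2k$ at order $t_j^{-k}$ is what the Stirling terms $z^{j}/t^{j-1}$ combine to give. The $\Re z$-dependence should be absorbed by redoing the expansion with $z=a+iv$ at fixed $a$. I need to check carefully that the truncation error is $O(t_j^{-N+\epsilon})$ uniformly for $1\le y\ll t_j^{1+\epsilon}$. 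This holds because on $\Re z=a$ small we have $(t_j/y)^{a}\ll t_j^{\epsilon}$.
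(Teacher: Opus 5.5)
Your proposal is correct. It is the standard argument the paper has in mind; the paper itself only says the lemma follows from the functional equation \eqref{func.eq}. The symmetric contour shift, using that $G$ is even with $G(0)=1$, gives the factor $2$; shifting to $\Re z=A$ gives the decay bound; and Stirling's formula for the conjugate pair $\Gamma(\frac14+\frac z2\pm it_j)$ on a line $\Re z=a\le\epsilon$, restricted to $|\Im z|\le t_j^{\epsilon}$, gives the expansion.

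Your degree count is fine as an upper bound. In fact only even powers $t_j^{-2m}$ occur, with coefficients of degree at most $3m$ in $z$. Since you only use that $G$ is entire, even, Gaussian-decaying and satisfies $G(0)=1$, the specific zeros imposed on $P_{2026}$ play no role here.
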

%%%%%%%%%%%%%%%%%%%%%%%%%%%%%%%%%%%%%%%%%%%%%%%%%%%%%%%%%%%%%%%%%%%%%%%%%%%%%%

%%%%%%%%%%%%%%%%%%%%%%%%%%%%%%%%%%%%%%%%%%%%%%%%%%%%%%%%%%%%%%%%%%%%%%%%%%%%%%%%%%%%%%%%%%%
%%%%%%%%%%%%%%%%%%%%%%%%%%%%%%%%%%%%%%%%%%%%%%%%%%%%%%%%%%%%%%%%%%%%%%%%%%%%%%%%%%%%%%%%%%%
\section{The first twisted moment}\label{sec:1st mom}
Our derivation of the new asymptotic formula for the second twisted moment is based on the following reciprocity-type formula for the first twisted moment:
\begin{equation}\label{1mom def}
\M_1(l;h):=\sum_{j}h(t_j)\alpha_{j}\lambda_{j}(l^2)L(\sym^2 u_{j},1/2).
\end{equation}
Here  $h(t)$ is a weight function satisfying the following conditions:
\begin{description}
  \item[C1] $h(t)$ is an even function;
  \item[C2] $h(t)$ is holomorphic in the strip $|\Im(t)|<\Upsilon$ for some $\Upsilon>1/2$;
  \item[C3] $h(t)$ is such that the following estimate $h(t)\ll(1+|t|)^{-2-\epsilon}$ holds in the strip $|\Im(t)|<\Upsilon$, $\Upsilon>1/2$;
  \item[C4] $h(\pm(n+1/2)i)=0$ for $n=0,1,\ldots N-1$, where $N>0$ is a sufficiently large integer.
\end{description}
The reciprocity formula expresses \eqref{1mom def} in terms of the moments of Zagier's $L$-series, which are defined (cf. \cite[Section 2]{SY}, \cite[Proposition 3]{Zag}) for $\Re(s) > 1$ as
\begin{equation}\label{Lbyk}
\Zag_{n}(s)=\frac{\zeta(2s)}{\zeta(s)}\sum_{q=1}^{\infty}\frac{\rho_q(n)}{q^{s}}=\zeta(2s)\sum_{q=1}^{\infty}\frac{\ups_q(n)}{q^{s}},
\end{equation}
where
\begin{equation}\label{rho upsilon def}
\rho_q(n):=\#\{x\Mod{2q}:x^2\equiv n\Mod{4q}\},\quad
\ups_q(n):=\sum_{q_2q_3=q}\mu(q_2)\rho_{q_3}(n).
%\lambda_q(n):=\sum_{q_{1}^{2}q_2q_3=q}\mu(q_2)\rho_{q_3}(n).
\end{equation}
The following is a special case of the main result of \cite{Bal} (see Theorem 12 and Lemmas 4, 7, and 9 therein).

%%%%%%%%%%%%%%%%%%%%%%%%%%%%%%%%%%%%%%%%%%%%%%%%%%%%%%%%%%%%%%%%%%%%%%%%%%%%%%

\begin{thm}\label{thm rho=1/2 exact} For  any function $h(t)$  satisfying the conditions $(C1)-(C4)$ we have
\begin{equation}\label{eq:M1lrho=1/2}
\M_1(l;h(\cdot))=MT(l;h)+CT(l;h)+ET(l;h)+S_1(l;h)+S_2(l;h),
\end{equation}
where
\begin{multline}\label{M1 MT}
MT(l;h)=\frac{1}{2\pi^2l^{1/2}}\int_{-\infty}^{\infty}th(t)\tanh(\pi t)\Biggl(
3\gamma-\frac{\pi}{2}-2\log l-3\log(2\pi)\\+
\psi(1/4+it)+\psi(1/4-it)\Biggr)dt,
\end{multline}
\begin{equation}\label{M1 CT}
CT(l;h)=-\frac{\zeta(1/2)}{\pi}\int_{-\infty}^{\infty}\frac{\tau_{it}(l^2)h(t)}{|\zeta(1+2it)|^2}\zeta(1/2+2it)\zeta(1/2-2it)dt,
\end{equation}
\begin{equation}\label{M1 ET}
ET(l;h)=-\frac{2\zeta(0)}{\zeta(3/2)}\tau_{1/4}(l^2)h\left(\frac{1}{4i}\right)-
\frac{\mathscr{L}_{-4l^2}(1/2)}{i(2 l\pi^3)^{1/2}}\int_{-\infty}^{\infty}\frac{th(t)}{\cosh(\pi t)}\frac{\Gamma(1/4+it)}{\Gamma(3/4+it)}dt,
\end{equation}
\begin{equation}\label{M1 S12}
S_1(l;h)=\sum_{n=1}^{2l-1}\frac{\Zag_{(2l-n)^2-4l^2}(1/2)}{\sqrt{2\pi(2l- n)}}I\left(2-\frac{n}{l};h\right),\quad
S_2(l;h)=\sum_{n=1}^{\infty}\frac{\Zag_{(n+2l)^2-4l^2}(1/2)}{\sqrt{2\pi(n+2l)}}I\left(2+\frac{n}{l};h\right),
\end{equation}
and for  $x \geq 2$
\begin{multline}\label{integralIgeq2}
I(x;h):=\frac{2^{3/2}i}{\pi^{3/2}}\int_{-\infty}^{\infty}\frac{th(t)}{\cosh(\pi t)}\left(\frac{2}{x}\right)^{2it}
\frac{\Gamma(1/4+it)\Gamma(3/4+it)}{\Gamma(1+2it)}\\ \times \sin\left( \pi(1/4-it)\right)
{}_2F_{1}\left(1/4+it,3/4+it,1+2it;\frac{4}{x^2} \right)dt,
\end{multline}
while  for $0<x<2$
\begin{multline}\label{integralIleq2}
I(x;h):=\frac{2i}{\pi^{3/2}}\int_{-\infty}^{\infty}\frac{th(t)}{\cosh(\pi t)}x^{1/2}
\frac{\Gamma(1/4+it)\Gamma(1/4-it)}{\Gamma(1/2)}\\ \times \cos\left( \pi(1/4+it)\right)
{}_2F_{1}\left(1/4+it,1/4-it,1/2;\frac{x^2}{4} \right)dt.
\end{multline}
\end{thm}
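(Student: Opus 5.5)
Theorem \ref{thm rho=1/2 exact} is a specialization of the general reciprocity formula of \cite{Bal}, so the plan is to start from the spectral identity for the first twisted moment and trace how each term specializes at $s=1/2$, $l$ in place of $l^2$.

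\textbf{Step 1: reduce to Kuznetsov.} Write $L(\sym^2 u_j,s)$ for $\Re s>1$ as its Dirichlet series. Apply the Kuznetsov trace formula to
\[
\sum_j h(t_j)\alpha_j\lambda_j(l^2)\lambda_j(n^2).
\]
Adding the continuous spectrum contribution produces the $\tau_{it}(l^2)$ terms. The conditions (C1)--(C4) guarantee the absolute convergence and the contour shifts needed later, because $h$ decays and vanishes at the poles $\pm(n+1/2)i$ of the Gamma factors.

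\textbf{Step 2: evaluate the diagonal and Kloosterman parts.} The diagonal $n=l$ gives a term of the form $l^{-s}\int t h(t)\tanh(\pi t)\,dt$. In the Kloosterman part, execute the $n$-sum via Poisson or the Gauss-sum evaluation of $\sum_n S(l^2,n^2;c)n^{-s}$. This converts the sum into
\[
\sum_{m}\frac{\rho_c(m^2-4l^2)}{c^{s}}
\]
type expressions, i.e.\ into the Zagier series $\Zag_{m^2-4l^2}(s)$, weighted by an integral transform of $h$ written through the Bessel kernel.

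\textbf{Step 3: continue analytically to $s=1/2$.} Here the Mellin--Barnes analysis produces two sources of extra terms. The pole of the Bessel transform together with the $\zeta(2s)/\zeta(s)$ structure yields the $\psi(1/4\pm it)$ main term $MT$; it arises as a limit $s\to1/2$ combining the diagonal with the $m=2l$ (i.e.\ $n=0$) degenerate term, where $\Zag_0$ is expressed through $\zeta$. The terms with $m^2-4l^2$ a negative nonsquare at $m=0$ and the residue at $s=1/4$-type points give $ET$, including $\Zag_{-4l^2}(1/2)$ and $\tau_{1/4}(l^2)h(1/(4i))$. The continuous spectrum at $s=1/2$ gives $CT$.

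\textbf{Step 4: identify the kernel $I(x;h)$.} The remaining terms with $m=2l\pm n$ give $S_1$ and $S_2$. The weight there is a Mellin integral. Evaluate it in closed form as a ${}_2F_1$ using Lemmas 4, 7, 9 of \cite{Bal}; the representation differs according as $x=m/l>2$ or $<2$, via the corresponding quadratic transformations.

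The main obstacle is Step 3: justifying the analytic continuation to $s=1/2$ and collecting all degenerate terms with correct constants, notably the $\gamma$, $\pi/2$ and $\log(2\pi)$ constants in $MT$. Since this is precisely the content of \cite[Theorem 12]{Bal}, in practice I would quote it and verify the specialization $s=1/2$ term by term.
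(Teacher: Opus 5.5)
Your proposal is correct and takes the same route as the paper, which gives no independent derivation but quotes \cite[Theorem 12]{Bal} together with Lemmas 4, 7 and 9 there. Your Kuznetsov--Poisson--Zagier outline is consistent with that source, in particular $MT$ arising as the $u\to0$ limit of the diagonal term $\zeta(1+2u)l^{-1/2-u}$ plus the $m=2l$ term built on $\Zag_0$ (cf.\ \eqref{M1 MT eq2}); one small correction is that the term $-\frac{2\zeta(0)}{\zeta(3/2)}\tau_{1/4}(l^2)h\left(\frac{1}{4i}\right)$ in $ET$ is not a residue in $s$, but the pair of residues at $t=\pm\frac{1}{4i}$ of the continuous-spectrum integrand $\tau_{it}(l^2)h(t)\zeta(s)\zeta(s+2it)\zeta(s-2it)/\bigl(\zeta(1+2it)\zeta(1-2it)\bigr)$, whose poles from $\zeta(s\pm 2it)$ cross the real $t$-axis as $s$ is continued from $\Re s>1$ to $s=1/2$.
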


Furthermore, by \cite[(21), (23)]{Bal}, we have
\begin{equation}\label{eq:integralI}
I(x;h)=\frac{2i}{\pi}\int_{-\infty}^{\infty}\frac{th(t)}{\cosh(\pi t)}I_t(x)dt,
\end{equation}
where $-1-2N<\Delta<1/2$, and
\begin{equation}\label{I_t(x)def}
I_t(x)=\frac{1}{2\pi i}\int_{(\Delta)}\frac{\Gamma(w/2+it)}{\Gamma(1-w/2+it)}\Gamma(1/2-w)\sin\left( \pi \frac{1/2+w}{2}\right)x^wdw.
\end{equation}

%%%%%%%%%%%%%%%%%%%%%%%%%%%%%%%%%%%%%%%%%%%%%%%%%%%%%%%%%%%%%%%%%%%%%%%%%%%%%%%%%%%%%%%%%%%
%%%%%%%%%%%%%%%%%%%%%%%%%%%%%%%%%%%%%%%%%%%%%%%%%%%%%%%%%%%%%%%%%%%%%%%%%%%%%%%%%%%%%%%%%%%
\section{The second twisted moment I}\label{sec:2mom to 1mom}
Let
\begin{equation}\label{2mom def}
\M_2(r;h(\cdot)):=\sum_{j}h(t_j)\alpha_{j}\lambda_{j}(r^2)L(\sym^2 u_{j},1/2)^2.
\end{equation}
Expressing one of the $L$-function via the approximate functional equation  \eqref{approx.func.eq.} and applying \eqref{eq:multipFourcoeff2}, we get
\begin{equation}\label{M2 to M1 eq1}
\M_2(r;h(\cdot))=2\sum_{m=1}^{\infty}\frac{1}{\sqrt{m}}\sum_{e|(m^2,r^2)}\M_1\left(\frac{mr}{e};h(\cdot)V(m,\cdot)\right).
\end{equation}
Throughout the paper, we will use the decomposition $e = e_1 e_2^2$, where $e_1$ is square-free. Under this convention, $e \mid m^2$ if and only if $e_1 e_2 \mid m$. Therefore,
\begin{equation}\label{M2 to M1 eq2}
\M_2(r;h(\cdot))=2\sum_{e|r^2}\sum_{m=1}^{\infty}\frac{1}{\sqrt{me_1e_2}}\M_1\left(\frac{mr}{e_2};h(\cdot)V(me_1e_2,\cdot)\right).
\end{equation}
Applying \eqref{eq:M1lrho=1/2}, we obtain
\begin{equation}\label{M2 to M1 eq3}
\M_2(r;h(\cdot))=\MT_2(r)+\MTn(r)+\SZE(r)+\SIN(r),
\end{equation}
where
\begin{equation}\label{2mom MT2def}
\MT_2(r;h)=2\sum_{e|r^2}\sum_{m=1}^{\infty}\frac{1}{\sqrt{me_1e_2}}MT\left(\frac{mr}{e_2},h(\cdot)V(me_1e_2,\cdot)\right),
\end{equation}
\begin{equation}\label{2mom MTn def}
\MTn(r;h)=2\sum_{e|r^2}\sum_{m=1}^{\infty}\frac{1}{\sqrt{me_1e_2}}\left(
CT\left(\frac{mr}{e_2},h(\cdot)V(me_1e_2,\cdot)\right)+ET\left(\frac{mr}{e_2},h(\cdot)V(me_1e_2,\cdot)\right)\right),
\end{equation}
\begin{multline}\label{2mom SZE def}
\SZE(r;h)=\frac{2}{\sqrt{2\pi}}\sum_{e|r^2}\sum_{m=1}^{\infty}\frac{1}{\sqrt{me_1e_2}}
\sum_{n=1}^{2mr/e_2-1}\frac{\Zag_{-n(4mr/e_2-n)}(1/2)}{\sqrt{2mr/e_2- n}}\\\times
I\left(2-\frac{ne_2}{mr};h(\cdot)V(me_1e_2,\cdot)\right),
\end{multline}
\begin{equation}\label{2mom SIN def}
\SIN(r;h)=\frac{2}{\sqrt{2\pi}}\sum_{e|r^2}\sum_{m=1}^{\infty}\frac{1}{\sqrt{me_1e_2}}
\sum_{n=1}^{\infty}\frac{\Zag_{n(n+4mr/e_2)}(1/2)}{\sqrt{n+2mr/e_2}}I\left(2+\frac{ne_2}{mr};h(\cdot)V(me_1e_2,\cdot)\right).
\end{equation}
It follows from \eqref{M1 CT}, \eqref{M1 ET} and \cite[Lemma 5.3]{BF2mom} that for $G\ll T^{1-\epsilon}$ one has
\begin{equation}\label{2mom MTn est}
\MTn(r;h)\ll T^{\epsilon}\max(G,T^{2/3}).
\end{equation}
To simplify the notation (cf. \cite[(4.3)]{Frolsym2}), let
\begin{equation}\label{I(m,x) def}
I(m,x):=I\left(x;h(\cdot)V(m,\cdot)\right).
\end{equation}

%%%%%%%%%%%%%%%%%%%%%%%%%%%%%%%%%%%%%%%%%%%%%%%%%%%%%%%%%%%%%%%%%%%%%%%%%%%%%%%%%%%%%%%%%%%
%%%%%%%%%%%%%%%%%%%%%%%%%%%%%%%%%%%%%%%%%%%%%%%%%%%%%%%%%%%%%%%%%%%%%%%%%%%%%%%%%%%%%%%%%%%
\section{Evaluation of $\MT_2(r;h)$}\label{sec: main term diagonal}
It follows from \cite[(33), (34)]{Bal} that
\begin{multline}\label{M1 MT eq2}
MT(l;h)=\frac{1}{\pi^2}\int_{-\infty}^{\infty}th(t)\tanh(\pi t)
\lim_{u\rightarrow0}\Biggl(
\frac{\zeta(1+2u)}{l^{1/2+u}}+
\frac{\zeta(1-2u)}{l^{1/2-u}}
\Gamma(1/2-u)\\\times 
\frac{2^{1/2+u}\pi^{-1/2+3u}i}{\sinh(\pi t)}
\sin\left( \pi(1/4+u/2-it)\right)\frac{\Gamma(1/4-u/2+it)\Gamma(3/4-u/2+it)}{\Gamma(1/4+u/2+it)\Gamma(3/4+u/2+it)}
\Biggr)dt.
\end{multline}
Let us introduce the notation
\begin{multline}\label{MT_1(u,t)}
MT_1(u,t):=
\frac{2^{1/2+u}\pi^{-1/2+3u}i}{\sinh(\pi t)}
\sin\left( \pi(1/4+u/2-it)\right)\Gamma(1/2-u)\\\times\frac{\Gamma(1/4-u/2+it)\Gamma(3/4-u/2+it)}{\Gamma(1/4+u/2+it)\Gamma(3/4+u/2+it)}.
\end{multline}
It follows from \eqref{2mom MT2def}, \eqref{M1 MT eq2}, \eqref{MT_1(u,t)} and \eqref{approx.fun.eq.Vdef} that
\begin{multline}\label{2mom MT2 eq1}
\MT_2(r;h)=\frac{2}{\pi^2}\int_{-\infty}^{\infty}th(t)\tanh(\pi t)
\sum_{e|r^2}\frac{1}{\sqrt{re_1}}\frac{1}{2\pi i}\int_{(a)}\frac{L_{\infty}(1/2+z,t)}{L_{\infty}(1/2,t)}\\\times
\frac{\zeta(1+2z)G(z)}{(e_1e_2)^z}
\lim_{u\rightarrow0}\left(
\frac{\zeta(1+2u)\zeta(1+z+u)}{(r/e_2)^{u}}+
\frac{\zeta(1-2u)\zeta(1+z-u)}{(r/e_2)^{-u}}MT_1(u,t)
\right)\frac{dz}{z}dt.
\end{multline}
Note that \eqref{2mom MT2 eq1} is the Maass form analogue of \cite[(7.5)]{BF2momWA2026}. Evaluating the limit via L'H\^opital's rule, we obtain
\begin{multline}\label{2mom MT2 lim}
\lim_{u\to0}\biggl(\frac{\zeta(1+2u)\zeta(1+z+u)}{(r/e_2)^{u}}+
\frac{\zeta(1-2u)\zeta(1+z-u)}{(r/e_2)^{-u}}MT_1(u,t) \biggr)=\\=
\zeta'(1+z)+
\zeta(1+z)\biggl(2\gamma-\log\frac{r}{e_2}-\frac{1}{2}\frac{d}{du}MT_1(u,t)\bigl|_{u=0}\biggr).
\end{multline}
One has
\begin{multline}\label{d/du MT_1(u,t)}
\frac{d}{du}MT_1(u,t)\bigl|_{u=0}=
\frac{2^{1/2}i}{\sinh(\pi t)}
\sin\left( \pi(1/4-it)\right)\Bigl(\log2+3\log\pi-\psi(1/4+it)-\\-\psi(3/4+it)-\psi(1/2)\Bigr)+
\frac{\pi i}{2^{1/2}\sinh(\pi t)}\cos\left( \pi(1/4-it)\right).
\end{multline}
It follows from \cite[5.5.4]{HMF} that
\begin{equation}
\psi(3/4+it)=\psi(1/4-it)+\frac{\pi}{\tan\pi(1/4-it)},
\end{equation}
and therefore, by \cite[5.4.13]{HMF},
\begin{multline}\label{d/du MT_1(u,t)2}
\frac{d}{du}MT_1(u,t)\bigl|_{u=0}=
\frac{2^{1/2}i}{\sinh(\pi t)}
\sin\left( \pi(1/4-it)\right)\biggl(3\log2+3\log\pi-\\-\psi(1/4+it)-\psi(1/4-it)+\gamma\biggr)-
\frac{\pi i}{2^{1/2}\sinh(\pi t)}\cos\left( \pi(1/4-it)\right).
\end{multline}
Since $h(t)$ is even, it follows that 
\begin{equation}
\int_{-\infty}^{\infty} t h(t) \tanh(\pi t) f(t) \, dt = 0
\end{equation}
whenever $f(t)$ is an odd function. Since
\begin{equation}
\frac{\sin\left( \pi(1/4-it)\right)}{\sinh(\pi t)}=\frac{\cosh\left( \pi t\right)}{\sqrt{2}\sinh(\pi t)}-\frac{i}{\sqrt{2}},\,
\frac{\cos\left( \pi(1/4-it)\right)}{\sinh(\pi t)}=\frac{\cosh\left( \pi t\right)}{\sqrt{2}\sinh(\pi t)}+\frac{i}{\sqrt{2}},
\end{equation}
it follows from \eqref{2mom MT2 lim} and  \eqref{d/du MT_1(u,t)2} that
\begin{multline}\label{2mom MT2 eq3}
\MT_2(r;h)=\frac{1}{\pi^2}\int_{-\infty}^{\infty}th(t)\tanh(\pi t)
\sum_{e|r^2}\frac{1}{\sqrt{re_1}}\frac{1}{2\pi i}\int_{(a)}\frac{L_{\infty}(1/2+z,t)}{L_{\infty}(1/2,t)}
\frac{\zeta(1+2z)G(z)}{(e_1e_2)^z}\\\times
\Bigl(2\zeta'(1+z)+
\zeta(1+z)\biggl(3\gamma-2\log\frac{r}{e_2}-3\log(2\pi)-\frac{\pi}{2}+\psi(1/4+it)+\psi(1/4-it)\biggr)\Bigr)
\frac{dz}{z}dt,
\end{multline}
which serves as an analogue of \cite[(7.7)]{BF2momWA2026}. Let
\begin{equation}\label{C1 def}
\Cc_1(z):=\log\frac{t^2}{8\pi^3}+2\frac{\zeta'(1+z)}{\zeta(1+z)}+3\gamma-\frac{\pi}{2},
\end{equation}
\begin{equation}\label{MT sum e def}
\sum_{e_1e_2|r}\frac{|\mu(e_1)|}{(e_1e_2)^{z}\sqrt{e_1}}\left(\Cc_1(z)+2\log\frac{e_2}{r}\right)=:r^{-z}\MAP(z,r).
\end{equation}
Proceeding as in \cite{BF2momWA2026} and applying \eqref{Vapproximation}, we finally obtain
\begin{multline}\label{2mom MT2 eq4}
\MT_2(r;h)=\frac{2}{\pi^2\sqrt{r}}\int_{0}^{\infty}th(t)\tanh(\pi t)
\frac{1}{2\pi i}\int_{(a)}\pi^{-3/2z}\frac{\Gamma(1/4+z/2)}{\Gamma(1/4)}\zeta(1+z)\\\times\zeta(1+2z)G(z)
\MAP(z,r)\left(\frac{t}{r}\right)^{z}\frac{dz}{z}dt+O\left(\frac{T^{\epsilon}}{\sqrt{r}}\right).
\end{multline}
%%%%%%%%%%%%%%%%%%%%%%%%%%%%%%%%%%%%%%%%%%%%%%%%%%%%%%%%%%%%%%%%%%%%%%%%%%%%%%%%%%%%%%%%%%%
%%%%%%%%%%%%%%%%%%%%%%%%%%%%%%%%%%%%%%%%%%%%%%%%%%%%%%%%%%%%%%%%%%%%%%%%%%%%%%%%%%%%%%%%%%%
\section{The Voronoi summation formula for Zagier's $L$-series}\label{sec:Voronoi}
After applying suitable transformations, we evaluate \eqref{2mom SZE def} and \eqref{2mom SIN def} by utilizing the Voronoi summation formula for Zagier's $L$-series established in \cite{BFVoron}. To this end, let (cf. \cite[Lemma 4.1]{BF2momWA2026})
\begin{equation}\label{Voronoi MTeq0}
\Resid(\phi;x)=\int_0^{\infty}\frac{\phi(y)}{x\sqrt{2y}}\left(\log\frac{2y}{\pi x^2}-\frac{\pi}{2}+3\gamma\right)dy+
\int_0^{\infty}\frac{\phi(-y)}{x\sqrt{2y}}\left(\log\frac{2y}{\pi x^2}+\frac{\pi}{2}+3\gamma\right)dy,
\end{equation}
and (cf. \cite[(1.11), (1.12)]{BFVoron})
\begin{equation}\label{phi hat+ to Phipm def0}
\widehat{\phi}(y):=\int_0^{\infty}\left(\frac{\phi(x)}{x}\Phi^{(+,+)}(xy)+\frac{\phi(-x)}{x}\Phi^{(+,-)}(xy)\right)dx,
\end{equation}
\begin{equation}\label{phi hat- to Phipm def0}
\widehat{\phi}(-y):=\int_0^{\infty}\left(\frac{\phi(x)}{x}\Phi^{(-,+)}(xy)+\frac{\phi(-x)}{x}\Phi^{(-,-)}(xy)\right)dx,
\end{equation}
where $y>0$. The kernel functions $\Phi^{(+,+)}(x)$ and $\Phi^{(-,-)}(x)$ are defined by (cf. \cite[(1.9)]{BFVoron}, \cite[(4.15)]{BF2momWA2026})
\begin{equation}\label{Phi++--def}
\Phi^{(\pm,\pm)}(x)=
-\frac{\sqrt{x}}{\sqrt{2}}\left(Y_0(2\sqrt{x})\mp J_{0}(2\sqrt{x})\right).
\end{equation}
The functions $\Phi^{(+,-)}(x)$ and $\Phi^{(-,+)}(x)$ are given by (cf. \cite[(1.10)]{BFVoron})
\begin{equation}\label{Phi+--+def}
\Phi^{(\pm,\mp)}(x):=\frac{2\sqrt{x}K_{0}(2\sqrt{x})}{\Gamma^2(1/2\mp 1/4)}.
\end{equation}
As usual,  $\left(\frac{c}{d}\right)$ denotes the extended Jacobi symbol (cf. \cite[Sec.~A1]{Biro2000}), and $\epsilon_d$ is defined by
\begin{equation}\label{epsilon def}
\epsilon_{q} =
\begin{cases}
1, & \text{if } q \equiv 1 \pmod{4}, \\
i, & \text{if } q \equiv 3 \pmod{4}.
\end{cases}
\end{equation}

Finally, we record the Voronoi summation formula from \cite[Theorems 1.1, 1.2, and 1.4]{BFVoron}, where $\sgn(\phi) = \pm 1$ depending on whether the support of $\phi$ is contained in $\mathbb{R}_{\pm}$.

%%%%%%%%%%%%%%%%%%%%%%%%%%%%%%%%%%%%%%%%%%%%%%%%%%%%%5
\begin{lem}\label{Thm Voronoi an c0mod4}
For $c\equiv0\Mod{4}$ and $ad\equiv1\Mod{c}$ we have
\begin{multline}\label{Thm.eq Voronoi an c0mod4}
\sum_{n=-\infty}^{\infty}\frac{\Zag_{n}\left(1/2\right)}{\sqrt{|n|}}\phi(n)e\left( \frac{an}{c}\right)=
\TM(M_0)e(1/8)\Resid(\phi;c)+\\+
\TM(M_0)e(1/8)\sum_{n\neq0}\frac{\Gamma\left(\frac{1}{2}-\frac{\sgn{n}}{4}\right)}{\Gamma\left(\frac{1}{2}-\frac{\sgn{\phi}}{4}\right)}
\frac{\Zag_{n}\left(1/2\right)}{\sqrt{|n|}}\widehat{\phi}\left(\frac{4\pi^2n}{c^2}\right)e\left( -\frac{dn}{c}\right),
\end{multline}
where $\TM(M_0)=\bar{\epsilon}_{d}\left(\frac{c}{d}\right)$.
\end{lem}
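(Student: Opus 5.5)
The plan is to deduce the formula from the automorphy of a weight-$1/2$ real-analytic Eisenstein series whose Fourier coefficients are essentially $\Zag_n(1/2)/\sqrt{|n|}$. This is the mechanism behind \cite[Theorems 1.1, 1.2, 1.4]{BFVoron}, and in the paper one may simply invoke those results. Still, it is worth outlining how they arise.

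\emph{Step 1: the generating Eisenstein series.} Recall (Cohen, Zagier \cite{Zag}) that the Eisenstein series $E_{1/2}(z,s)$ of weight $1/2$ for $\Gamma_0(4)$ in Kohnen's plus space has $n$-th Fourier coefficient, for $n\neq0$, equal to an explicit Gamma factor times $\Zag_n(s')/|n|^{s'}$ times a Whittaker function $W_{\pm1/4,\,s'-1/2}(4\pi|n|y)$. The zeroth coefficient involves $\zeta(2s')$ and $\zeta(2s'-1)$. At the central point $s'=1/2$ the two terms of the constant term collide. Their limit produces $y^{1/2}$ and $y^{1/2}\log y$ terms with constants such as $3\gamma\pm\pi/2$, which is exactly the shape of $\Resid(\phi;x)$. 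The sign of $n$ selects the Whittaker index $\pm1/4$; this is the source of the factors $\Gamma(1/2-\sgn n/4)$.

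\emph{Step 2: the additive twist and the transformation.} Write $\phi(n)$ as a Mellin/Kontorovich--Lebedev type integral of the $n$-th coefficient over $y$, and evaluate $E_{1/2}$ at $z=a/c+iy$. Choose $\gamma=\begin{pmatrix}a&b\\c&d\end{pmatrix}\in\Gamma_0(4)$, which is possible since $4\mid c$ and $ad\equiv1\Mod{c}$. This $\gamma$ maps $-d/c+i/(c^2y)$ to $a/c+iy$. The theta multiplier of $\gamma$ is $\bar\epsilon_d\left(\frac{c}{d}\right)$, which equals $\TM(M_0)$, up to the weight factor $e(1/8)$ coming from $(cz+d)^{1/2}/|cz+d|^{1/2}$ at the point.

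Unfolding both sides, we compare the twisted sum over $n$ with:
\begin{itemize}
\item the constant term at $-d/c$, which yields $\TM(M_0)e(1/8)\Resid(\phi;c)$;
\item the dual sum with $e(-dn/c)$ and argument $4\pi^2n/c^2$.
\end{itemize}

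\emph{Step 3: identifying the kernels.} The transform from the $y$-variable to the dual variable is computed sign by sign. Integrating products of the Whittaker functions $W_{\pm1/4,0}$ against the weight factor yields Bessel functions of order $0$. For equal signs one gets the combination $Y_0\mp J_0$ at $2\sqrt{x}$. For opposite signs one gets $K_0$, with normalization $1/\Gamma^2(1/2\mp1/4)$. These are exactly $\Phi^{(\pm,\pm)}$ and $\Phi^{(\pm,\mp)}$. The ratio of Gamma factors in front of $\widehat\phi$ records the mismatch between the normalization on the $n$-side and the $\phi$-side.

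\emph{Main obstacle.} The main difficulty is justifying convergence and the interchanges at the central point $s=1/2$, where the Eisenstein series is not absolutely convergent. My approach is to prove the identity for $\Re s$ large, with test functions $\phi$ smooth and compactly supported away from $0$. I would then continue analytically in $s$ and pass to the limit $s\to1/2$, tracking the double pole in the constant term to obtain $\Resid$. Finally, extending to general $\phi$ uses the decay of $\widehat\phi$ obtained by integrating by parts against the Bessel kernels.
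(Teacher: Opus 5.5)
Your proposal matches the paper: the lemma is not proved there but quoted directly from \cite[Theorems 1.1, 1.2, 1.4]{BFVoron}, so invoking those theorems is the entire argument. Your Eisenstein-series outline (the theta multiplier $\bar\epsilon_d\left(\frac{c}{d}\right)$, the factor $e(1/8)$ coming from $cz+d=i/(cy)$, and the logarithmic terms from the degenerate constant term) is extra motivation the paper does not give. As a standalone proof, however, it is missing one mechanism you should add. A single $y$-integral of $E_{1/2}(a/c+iy)$ against a test function always produces both signs of $n$ at once, because the transforms built from $W_{1/4,0}$ and $W_{-1/4,0}$ determine each other. So reaching a one-sided $\phi$, as the factor $\Gamma\left(\frac12-\frac{\sgn\phi}{4}\right)$ in the lemma requires, needs a second independent functional, for example an $x$-derivative or a raising/lowering operator. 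Equivalently, you need a $2\times 2$ system of functional equations for the twisted Dirichlet series over $n>0$ and $n<0$.
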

%%%%%%%%%%%%%%%%%%%%%%%%%%%%%%%%%%%%%%%%%%%%%5
\begin{lem}\label{Thm Voronoi an codd}
For $(c,2)=1$ and $4ad\equiv -1\Mod{c}$ we have
\begin{multline}\label{Thm.eq Voronoi an codd}
\sum_{n=-\infty}^{\infty}\frac{\Zag_{n}\left(1/2\right)}{\sqrt{|n|}}\phi(n)e\left( \frac{an}{c}\right)=
\TM(M_1)\sqrt{2}\Resid(\phi;4c)+\\+
\TM(M_1)\sqrt{2}\sum_{n\neq0}
\frac{\Gamma\left(\frac{1}{2}-\frac{\sgn{n}}{4}\right)}{\Gamma\left(\frac{1}{2}-\frac{\sgn{\phi}}{4}\right)}
\frac{\Zag_{4n}\left(1/2\right)}{\sqrt{|4n|}}
\widehat{\phi}\left(\frac{\pi^2n}{c^2}\right)e\left(\frac{dn}{c}\right),
\end{multline}
where $\TM(M_1)=\bar{\epsilon}_{c}\left(\frac{4d}{c}\right)$.
\end{lem}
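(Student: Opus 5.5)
The statement just above is Lemma \ref{Thm Voronoi an codd}, the Voronoi summation formula for Zagier's $L$-series with odd modulus $c$. I will not rederive it from scratch. The plan is to read it off from \cite[Theorems 1.1, 1.2, 1.4]{BFVoron}, after matching that paper's normalizations with the ones used here.

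The route I would follow is the standard one. First, write
\[
\sum_n \frac{\Zag_n(1/2)}{\sqrt{|n|}}\phi(n)e\!\left(\frac{an}{c}\right)
\]
as a period integral of the weight-$1/2$ (nonholomorphic) Eisenstein series attached to the theta multiplier. The $\Zag_n(s)$ are exactly the normalized Fourier coefficients of the Cohen–Zagier type Eisenstein series, evaluated at $s=1/2$. Second, represent $\phi$ by Mellin inversion, so that the twisted sum becomes a twisted Dirichlet series $\sum_n \Zag_n(1/2)|n|^{-1/2-w}e(an/c)$, taken separately for $n>0$ and $n<0$. Third, obtain the functional equation of this twisted series by applying the transformation law of the Eisenstein series under a matrix $M_1\in\Gamma_0(4)$ that sends the cusp $a/c$ to $\infty$.

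For odd $c$, the matrix $M_1$ is not in $\Gamma_0(4)$ directly. One has to pass through the cusp $1/2$ (or $0$), which is why the argument $4n$ and the factor $\sqrt2$ appear, and why the condition is $4ad\equiv-1\pmod c$. The multiplier value $\TM(M_1)=\bar\epsilon_c\left(\frac{4d}{c}\right)$ then comes from the explicit theta multiplier formula (cf.\ \cite{Biro2000}).

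The gamma factors in the functional equation mix the two signs $n>0$ and $n<0$. Inverting the Mellin transforms turns them into the kernels $\Phi^{(\pm,\pm)}$, namely the $Y_0\mp J_0$ kernels for equal signs and the $K_0$ kernels for opposite signs. The ratio $\Gamma(1/2-\sgn n/4)/\Gamma(1/2-\sgn\phi/4)$ records the relative normalization between the two sign components.

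The residue term $\Resid(\phi;4c)$ comes from the double pole at $w=0$ of the twisted series, caused by the constant term of the Eisenstein series at $s=1/2$, where $\zeta(2s)$ and the scattering term merge. Differentiating in $s$ produces the constants $3\gamma\pm\pi/2$ and the logarithm $\log(2y/\pi x^2)$.

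The main obstacle is the bookkeeping of constants: the multiplier and Jacobi-symbol conventions, and the exact gamma ratios. In this paper I would simply cite \cite{BFVoron} for these, after checking that the normalization of $\widehat\phi$ in \eqref{phi hat+ to Phipm def0}–\eqref{phi hat- to Phipm def0} coincides with \cite[(1.11),(1.12)]{BFVoron}.
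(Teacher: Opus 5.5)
Your proposal matches the paper: the paper gives no independent argument for this lemma and simply records it from \cite[Theorems 1.1, 1.2, 1.4]{BFVoron}, which is exactly your plan of citing that work after matching the normalizations of $\widehat{\phi}$, $\Resid$ and $\TM(M_1)$. One small slip in your heuristic aside: for odd $c$ the cusp $a/c$ is $\Gamma_0(4)$-equivalent to $0$, not to $1/2$ (the cusp $1/2$ arises only when $c\equiv 2\Mod{4}$, the situation of Lemma \ref{Thm Voronoi an c2mod4}).
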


%%%%%%%%%%%%%%%%%%%%%%%%%%%%%%%%%%%%%%%%%%%%%%%%%%%%%%%%%%%%%%%%%%%%%%%%%%%%%%%%%

\begin{lem}\label{Thm Voronoi an c2mod4}
For  $c\equiv2\Mod{4}$, let $c_1=c/2$. For $2ad_5\equiv-1\Mod{c_1}$,  $8ad_4\equiv-1\Mod{c_1}$ we have
\begin{multline}\label{Thm.eq Voronoi an c2mod4}
\sum_{n=-\infty}^{\infty}\frac{\Zag_{n}\left(1/2\right)}{\sqrt{|n|}}\phi(n)e\left( \frac{an}{c}\right)=
\frac{\sqrt{2}}{\TM(M_4)}\Resid(\phi;4c_1)-
\TM(M_5)\sqrt{2}\Resid(\phi;4c_1)\\+
\frac{\sqrt{2}}{\TM(M_4)}\sum_{n\neq0}\frac{\Gamma\left(\frac{1}{2}-\frac{\sgn{n}}{4}\right)}{\Gamma\left(\frac{1}{2}-\frac{\sgn{\phi}}{4}\right)}
\frac{\Zag_{n}\left(1/2\right)}{\sqrt{|n|}}
\widehat{\phi}\left(\frac{\pi^2n}{4c_1^2}\right)e\left(\frac{d_4n}{c_1}\right)-\\-
\TM(M_5)\sqrt{2}\sum_{n\neq0}\frac{\Gamma\left(\frac{1}{2}-\frac{\sgn{n}}{4}\right)}{\Gamma\left(\frac{1}{2}-\frac{\sgn{\phi}}{4}\right)}
\frac{\Zag_{4n}\left(1/2\right)}{\sqrt{|4n|}}
\widehat{\phi}\left(\frac{\pi^2n}{c_1^2}\right)e\left(\frac{d_5n}{c_1}\right),
\end{multline}
where $\TM^{-1}(M_4)=\epsilon_{c_1}\left(\frac{8a}{c_1}\right)$ and $\TM(M_5)=\bar{\epsilon}_{c_1}\left(\frac{4d_5}{c_1}\right)$.
\end{lem}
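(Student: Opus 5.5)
We plan to reduce the case $c\equiv2\Mod{4}$ to Lemmas \ref{Thm Voronoi an c0mod4} and \ref{Thm Voronoi an codd}, using only an additive decomposition of the twist $e(an/c)$ and the support of $n\mapsto\Zag_n(1/2)$. The key fact is that $\Zag_n(1/2)$ is attached to discriminants, so it vanishes unless $n\equiv0,1\Mod{4}$. (Equivalently, everything comes from the modularity of the weight $1/2$ Eisenstein series on $\Gamma_0(4)$, whose Fourier coefficients are the $\Zag_n(1/2)/\sqrt{|n|}$. The cusp $a/(2c_1)$ is equivalent to the irregular cusp $1/2$, which explains why two dual sums appear.)

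\emph{Step 1: splitting the twist.} Write $c=2c_1$ with $c_1$ odd and $(a,c)=1$, so $a$ is odd. By the Chinese remainder theorem choose $u,v$ with $uc_1+2v=1$; then $u$ is odd and $2v\equiv1\Mod{c_1}$. This gives
\[
e\left(\frac{an}{c}\right)=(-1)^{n}\,e\left(\frac{avn}{c_1}\right).
\]
Since only $n\equiv0,1\Mod{4}$ contribute, $(-1)^n=2\cdot\mathbf 1_{4\mid n}-1$. We then express $\mathbf 1_{4\mid n}$ on this support by the exact identity
\[
\mathbf 1_{4\mid n}=\frac{e(n/4)-i}{1-i},\qquad n\equiv0,1\Mod{4}.
\]
Hence the left-hand side of \eqref{Thm.eq Voronoi an c2mod4} becomes a linear combination of two kinds of twisted sums: one with twist $e\bigl(n(c_1+4av)/(4c_1)\bigr)$, whose modulus $4c_1$ is $\equiv0\Mod 4$, and others with twist $e(avn/c_1)$, whose modulus $c_1$ is odd. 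The coefficient of the odd-modulus sums is $-2i/(1-i)-1=-i$.

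\emph{Step 2: applying the known formulas.} To the sum with modulus $4c_1$ we apply Lemma \ref{Thm Voronoi an c0mod4}, with numerator $a'=c_1+4av$ and $d$ its inverse modulo $4c_1$. The dual frequencies $4\pi^2n/(4c_1)^2=\pi^2n/(4c_1^2)$ and the factors $\Zag_n(1/2)/\sqrt{|n|}$ are exactly those of the $M_4$-term. Reducing $a'd\equiv1$ modulo $c_1$ and using $4v\equiv2\Mod{c_1}$ gives $8ad\equiv1\Mod{c_1}$; we expect this to become $8ad_4\equiv-1$ after the substitution $d\mapsto -d$ (or $n\mapsto -n$). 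To the odd-modulus sums we apply Lemma \ref{Thm Voronoi an codd} with numerator $av$. There $4avd\equiv-1$ reads $2ad_5\equiv-1\Mod{c_1}$, and the output has the shape of the $M_5$-term: frequencies $\pi^2n/c_1^2$ and coefficients $\Zag_{4n}(1/2)/\sqrt{|4n|}$. The residual terms transform in the same way, with $\Resid(\phi;4c_1)$ arising in both cases.

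\emph{Step 3: the multipliers (main obstacle).} It remains to check that the scalar factors collapse to the constants stated in the lemma. On the $4c_1$ side, $\frac{2}{1-i}\,\TM(M_0)e(1/8)$ with $\TM(M_0)=\bar\epsilon_d\bigl(\tfrac{4c_1}{d}\bigr)$ should equal $\sqrt2/\TM(M_4)=\sqrt2\,\bar\epsilon_{c_1}\bigl(\tfrac{8a}{c_1}\bigr)^{-1}$. On the odd side, the coefficient $-i$ times $\TM(M_1)\sqrt2$ should equal $-\TM(M_5)\sqrt2$. Both identities must be verified with quadratic reciprocity for the extended Jacobi symbol, the relations $d\equiv\overline{8a}$ and $d_5\equiv-\overline{2a}\Mod{c_1}$, and the dependence of $\epsilon_d$ on $d\Mod 4$ (note that $d$ is determined modulo $4$ by $c_1$ and $a$). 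This sign and eighth-root-of-unity bookkeeping is where I expect the real work to lie. If a direct verification gets too messy, the fallback is to redo the derivation at the level of $\Gamma_0(4)$: conjugate the cusp $1/2$ by an explicit matrix and read off the theta multipliers. In that approach $M_4$ and $M_5$ correspond to the two matrices relating the cusp $1/2$ to $\infty$ and to $0$, which is what the notation suggests.
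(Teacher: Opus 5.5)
The paper gives no proof of this lemma; it quotes it from \cite{BFVoron}. Your Step 1 is correct, and reducing to Lemmas \ref{Thm Voronoi an c0mod4} and \ref{Thm Voronoi an codd} is a workable route. However, Step 2 misidentifies the output of Lemma \ref{Thm Voronoi an c0mod4}, and as a result the multiplier identities in Step 3 are false.

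Reducing $a'd\equiv1\Mod{4c_1}$ modulo $c_1$ gives $2ad\equiv1$, not $8ad\equiv1$. Thus $d\equiv-d_5\Mod{c_1}$, and reducing modulo $4$ gives $d\equiv c_1$. The dual twist $e(-dn/(4c_1))$ still has modulus $4c_1$. By the Chinese remainder theorem,
\[
e\Bigl(-\frac{dn}{4c_1}\Bigr)=e\Bigl(-\frac{n}{4}\Bigr)e\Bigl(\frac{d_4n}{c_1}\Bigr),\qquad 8ad_4\equiv-1\Mod{c_1},
\]
and this is where $d_4$ actually comes from. The factor $e(-n/4)$ is not constant on the support $n\equiv0,1\Mod4$; there it equals $-i+(1+i)\mathbf 1_{4\mid n}$. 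Hence the dual sum from Lemma \ref{Thm Voronoi an c0mod4} is $-i$ times the full $M_4$-type sum plus $(1+i)$ times its $4\mid n$ part. After $n=4m$ and $4d_4\equiv d_5\Mod{c_1}$, that part is exactly the $M_5$-type sum, so the $M_5$ term receives contributions from both lemmas.

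Without this, Step 3 cannot close. Since $d_5$ is precisely the $d$ of Lemma \ref{Thm Voronoi an codd} for numerator $av$, you have $\TM(M_1)=\TM(M_5)$ identically. Your requirement $-i\TM(M_1)\sqrt2=-\TM(M_5)\sqrt2$ would then force $i=1$. Already for $c=2$, $a=1$, where every multiplier equals $1$, your two identities read $\sqrt2\, i=\sqrt2$ and $-i\sqrt2=-\sqrt2$.

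With the correction, and using $(1+i)e(1/8)=\sqrt2\, i$, the coefficients are:
\begin{itemize}
\item $\sqrt2\,\TM(M_0)$ for the $M_4$-sum;
\item $\sqrt2(i-1)\TM(M_0)-\sqrt2\, i\,\TM(M_1)$ for the $M_5$-sum;
\item $\sqrt2\, i\,(\TM(M_0)-\TM(M_1))$ for $\Resid(\phi;4c_1)$.
\end{itemize}
The lemma then follows from $\TM(M_0)=\TM(M_1)=\TM(M_5)=\TM(M_4)^{-1}$. The equality $\TM(M_0)=\TM(M_5)$ is quadratic reciprocity for $\bigl(\frac{c_1}{d}\bigr)$ (take $d>0$), combined with $d\equiv c_1\Mod4$ and $d\equiv-d_5\Mod{c_1}$. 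The equality $\TM(M_5)=\TM(M_4)^{-1}$ is $\epsilon_{c_1}^2=\bigl(\frac{-1}{c_1}\bigr)$. In particular, the two $\Resid$ terms in the statement cancel. So your reduction becomes a complete, elementary proof once the $4\mid n$ part of the $c\equiv0\Mod4$ dual sum is moved into the $M_5$ term.
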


%%%%%%%%%%%%%%%%%%%%%%%%%%%%%%%%%%%%%%%%%%%%%%%%%%%%%%%%%%%%%%%%%%%%%%%%%%%%%%
To estimate the integral transforms on the right-hand side of the Voronoi formula, we utilize the following classical result \cite[Lemma 8.1]{BKY} (see also \cite[Lemma A.1]{AHLQ}).

\begin{lem}\label{Lemma BKY}
Suppose that there exist positive parameters $R, P, X, Y$, and $V$ such that
\begin{equation}\label{BKYconditions}
|f'(x)|\gg R,\quad
f^{(i)}(x)\ll\frac{Y}{P^i},\quad
g^{(j)}(x)\ll\frac{X}{V^j}
\end{equation}
for $i\ge1,j\ge0$. Then
\begin{equation}\label{I BKY est}
I=\int_a^{b}g(x)e(f(x))dx\ll(b-a)X\left(\frac{1}{RV}+\frac{1}{RP}+\frac{Y}{R^2P^2}\right)^{A}.
\end{equation}
\end{lem}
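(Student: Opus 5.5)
This is the standard bound for a non-stationary oscillatory integral, proved by repeated integration by parts. The plan is to write
\[
e(f(x)) = \frac{1}{2\pi i f'(x)}\,\frac{d}{dx}\,e(f(x)),
\]
which is allowed because $|f'|\gg R>0$ on $[a,b]$. Then integrate by parts $A$ times. Formally, define the operator $D$ by $D(g)=\frac{d}{dx}\bigl(g/(2\pi i f')\bigr)$, so that $I=(-1)^A\int_a^b D^A(g)\,e(f)\,dx$. As in the cited \cite[Lemma 8.1]{BKY}, I assume $g$ is smooth and compactly supported in $(a,b)$, or at least vanishes to high order at the endpoints, so that all boundary terms drop out. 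Bounding trivially then gives
\[
I\ll (b-a)\,\sup_{[a,b]}|D^A g|.
\]

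The core of the argument is a bound for $D^A g$. Expanding by the Leibniz rule, $D^A g$ is a linear combination of terms of the form
\[
\frac{g^{(\nu)}\,\prod_{k} f^{(i_k)}}{(f')^{A+m}},
\]
where each $i_k\ge2$ and a counting identity holds: each application of $D$ either puts a derivative on $g$, or raises the derivative order of an existing factor $f^{(i)}$ (the new $f''/f'^2$ factor from differentiating $1/f'$ being the case $i=1\mapsto2$). Track each term by homogeneity. A derivative landing on $g$ costs $1/V$. A derivative landing on an $f^{(i)}$ factor, or producing a new $f''/f'$ factor, costs a factor $\ll 1/P$ times possibly an extra $Y/(PR)$, using $f^{(i)}\ll Y/P^i$ and $|f'|\gg R$. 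Each $D$ also carries a $1/f'$, contributing $1/R$.

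I then group each step's contribution. Steps of type ``derivative on $g$'' contribute $1/(RV)$. Steps that raise the order of an existing $f^{(i)}$ contribute $1/(RP)$. Steps that create a new $f''/f'$ factor contribute $Y/(R^2P^2)$. Hence every term is bounded by
\[
X\Bigl(\tfrac{1}{RV}\Bigr)^{a_1}\Bigl(\tfrac{1}{RP}\Bigr)^{a_2}\Bigl(\tfrac{Y}{R^2P^2}\Bigr)^{a_3}, \qquad a_1+a_2+a_3=A,
\]
which is at most $X\bigl(\tfrac{1}{RV}+\tfrac{1}{RP}+\tfrac{Y}{R^2P^2}\bigr)^A$. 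Since the number of terms depends only on $A$, this gives \eqref{I BKY est}.

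The main obstacle is the bookkeeping in this last step. One has to verify, by induction on $A$, that the powers of $f'$ in the denominator always exactly compensate the factors $f^{(i)}$ produced, so that no term is worse than the three listed scales. I would set up an explicit induction on $A$ with a description of the monomials appearing in $D^A g$. Alternatively, as a shortcut, one can cite the proof of \cite[Lemma 8.1]{BKY} directly, since the statement here is identical to that lemma.
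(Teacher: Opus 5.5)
Your argument is correct and is the repeated integration-by-parts proof of \cite[Lemma 8.1]{BKY}; the paper does not reprove the lemma but only cites it. Your monomial bookkeeping $g^{(\nu)}\prod_k f^{(i_k)}/(f')^{A+m}$ with $i_k\ge2$ and $\nu+\sum_k(i_k-1)=A$ is exactly what produces the three scales $1/(RV)$, $1/(RP)$, $Y/(R^2P^2)$. You are also right to require that $g$ vanish to order $A$ at $a$ and $b$, as BKY do. Without it the lemma as printed fails: take $g\equiv1$, $f(x)=Rx$, $Y=RP$ and $P=V\to\infty$. The condition does hold in all of the paper's applications, since the weights there contain the compactly supported factor $U$.
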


%%%%%%%%%%%%%%%%%%%%%%%%%%%%%%%%%%%%%%%%%%%%%%%%%%%%%%%%%%%%%%%%%%%%%%%%%%%%%%%%%%%%%%%%%%%
%%%%%%%%%%%%%%%%%%%%%%%%%%%%%%%%%%%%%%%%%%%%%%%%%%%%%%%%%%%%%%%%%%%%%%%%%%%%%%%%%%%%%%%%%%%
\section{Evaluation of $\SIN(r;h)$}\label{sec:SIN}
We begin our investigation of \eqref{2mom SIN def} by studying the asymptotic properties of $I\left(me_1e_2, 2+\frac{ne_2}{mr}\right)$ (cf. \eqref{I(m,x) def}). In the case where
\begin{equation}
\frac{ne_2}{mr} \gg T^{-2+\epsilon},
\end{equation}
this analysis was carried out in \cite[Section 4]{Frolsym2}, where it was shown that the contribution of terms satisfying $\frac{ne_2}{mr} \gg T^{\epsilon}G^{-2}$ is negligible. Furthermore, within the range
\begin{equation}
T^{-2+\epsilon} \ll \frac{ne_2}{mr} \ll T^{\epsilon}G^{-2},
\end{equation}
one can replace the function by its main term given in \cite[(4.11)]{Frolsym2}:
\begin{equation}\label{2mom SIN I eq1}
I\left(me_1e_2,x\right)=\left(\frac{x^2}{x^2-4}\right)^{1/4}
GT^{1/2}\exp(-2iTA(x))\exp\left(-G^2A(x)^2\right)V(me_1e_2,T),
\end{equation}
where $A(x):=\arcosh{x/2}.$ 

If $x \ll T^{-2+\epsilon}$, we are unable to obtain an asymptotic expansion for $I(m,x)$, as the argument of the confluent hypergeometric functions in \cite[Lemma 4.4]{BF2mom} remains bounded by $T^{\epsilon}$. To estimate $I(m,x)$, we must first bound the hypergeometric function featured in \cite[Lemma 4.4]{BF2mom}. For this purpose, note that by \cite[(13.6.10)]{HMF},
\begin{equation}\label{U to K0}
U(1/2,1,z)=\frac{e^{z/2}}{\sqrt{\pi}}K_0(z/2).
\end{equation}
Using \cite[(13.3.27),(10.29.3)]{HMF} and  \eqref{U to K0}, we have
\begin{multline}\label{U1/2 2 to K eq1}
U(1/2,2,z)=-e^z\frac{d}{dz}\left(e^{-z}U(1/2,1,z)\right)=-e^z\frac{d}{dz}\left(\frac{e^{-z/2}}{\sqrt{\pi}}K_0(z/2)\right)=\\=
\frac{-e^z}{\sqrt{\pi}}\left(-\frac{e^{-z/2}}{2}K_0(z/2)-\frac{e^{-z/2}}{2}K_1(z/2)\right)=
\frac{e^{z/2}}{2\sqrt{\pi}}\left(K_0(z/2)+K_1(z/2)\right).
\end{multline}
It follows from \cite[Lemma 4.4]{BF2mom}, \eqref{U to K0}, \eqref{U1/2 2 to K eq1} and \cite[(10.30.2),(10.30.3)]{HMF} that for $t/z\ll t^{\epsilon}$
\begin{equation}\label{2F1(1/2,1/2,1+2ir)est}
\frac{\Gamma(1/2+2it)}{\Gamma(1+2it)}{}_2F_{1}\left( \frac{1}{2},\frac{1}{2},1+2it; -z\right)\ll
z^{-1/2}|\log(t/z)|.
\end{equation}
In \cite[Lemma 4.6]{BF2mom}, the result of \cite[Lemma 4.4]{BF2mom} is applied with $z = \frac{x-\sqrt{x^2-4}}{2\sqrt{x^2-4}} \asymp (x-2)^{-1/2}$ as $x$ is close to $2$. Combining \eqref{2F1(1/2,1/2,1+2ir)est} with the final equation in \cite[Lemma 4.6]{BF2mom}, we obtain
\begin{multline}\label{2F1(1/4+ir,3/4+ir,1+2ir)est}
{}_2F_{1}\left( \frac{1}{4}+it,\frac{3}{4}+it,1+2it; \frac{4}{x^2}\right)\ll \sqrt{t}
\left(\frac{x^2}{x^2-4}\right)^{1/4}(x-2)^{1/4}\log(t\sqrt{x-2})\ll\\\ll \sqrt{t}\log(t\sqrt{x-2}).
\end{multline}
Combining \eqref{2F1(1/4+ir,3/4+ir,1+2ir)est}, \eqref{I(m,x) def}, and \eqref{integralIgeq2}, we prove that  for $0<x-2\ll t^{-2+\epsilon}$
\begin{equation}\label{I(m,x)x=2 est}
I\left(me_1e_2,x\right)\ll
\int_{-\infty}^{\infty}th(t)|\log(t\sqrt{x-2})V(me_1e_2,t)|dt\ll T^{1+\epsilon}G|\log(T\sqrt{x-2})|.
\end{equation}
Since the behavior of $I\left(me_1e_2, x\right)$ is governed by two distinct regimes, we split \eqref{2mom SIN def} into two subsums as follows:
\begin{equation}\label{SIN=Sigma21+Sigma22}
\SIN(r;h)=\Sigma_{2,1}(r;h)+\Sigma_{2,2}(r;h)+O(T^{-A}),
\end{equation}
where in $\Sigma_{2,1}$ the $n$-sum is over
\begin{equation}
0<n\ll\frac{mr}{e_2}T^{\epsilon-2},
\end{equation}
and in $\Sigma_{2,2}$ the $n$-sum is over
\begin{equation}
\frac{mr}{e_2}T^{\epsilon-2}\ll n\ll\frac{mrT^{\epsilon}}{e_2G^2}.
\end{equation}
The summands corresponding to $n \gg \frac{mr T^{\epsilon}}{e_2 G^2}$ contribute an error of $O(T^{-A})$ according to the preceding discussion. Furthermore, we can truncate the $m$-sum to the range $m \ll T^{1+\epsilon}/(e_1 e_2)$ by virtue of \eqref{Vestimate}. Initially, we treat both sums in \eqref{SIN=Sigma21+Sigma22} in an identical manner. Proceeding similarly to \cite[Section 8]{BF2momWA2026}, we perform the change of variables
$4mr/e_2+n=q$, i.e.
\begin{equation}\label{SIN m to q}
m=\frac{q-n}{4re_2^{-1}},\quad q\equiv n\Mod{\frac{4r}{e_2}}.
\end{equation}
In order to do this, we first change the order of summation to make the sum over $n$ the outer one; thus, in $\Sigma_{2,1}$ and $\Sigma_{2,2}$, respectively, we have
\begin{equation}\label{Sigma2122 n m sums}
\sum_{n\ll \frac{rT^{\epsilon-1}}{e}}\sum_{ne_2T^{2-\epsilon}/r\ll m\ll T^{1+\epsilon}/e_1e_2},\quad
\sum_{n\ll \frac{rT^{1+\epsilon}}{G^{2}e}}\sum_{ne_2G^{2-\epsilon}/r\ll m\ll \min(T^{1+\epsilon}/e_1e_2,ne_2T^{2-\epsilon}/r)}.
\end{equation}
Therefore, after applying the change of variables \eqref{SIN m to q}, we finally obtain
\begin{equation}\label{Sigma21 eq1}
\Sigma_{2,1}(r;h)=\frac{4}{\sqrt{\pi}}\sum_{e|r^2}\frac{\sqrt{r}}{\sqrt{e}}\sum_{n\ll \frac{rT^{-1+\epsilon}}{e}}
\sum_{\substack{nT^{2-\epsilon}\ll q\ll rT^{1+\epsilon}/e\\q\equiv n\Mod{4r/e_2}}}
\frac{\Zag_{qn}(1/2)}{\sqrt{q^2-n^2}}I\left(e\frac{q-n}{4r}, 2\frac{q+n}{q-n}\right),
\end{equation}
\begin{equation}\label{Sigma22 eq1}
\Sigma_{2,2}(r;h)=\frac{4}{\sqrt{\pi}}\sum_{e|r^2}\frac{\sqrt{r}}{\sqrt{e}}\sum_{n\ll \frac{rT^{1+\epsilon}}{eG^2}}
\sum_{\substack{nG^{2-\epsilon}\ll q\ll \min(rT^{1+\epsilon}/e,nT^{2-\epsilon}) \\q\equiv n\Mod{4r/e_2}}}
\frac{\Zag_{qn}(1/2)}{\sqrt{q^2-n^2}}I\left(e\frac{q-n}{4r}, 2\frac{q+n}{q-n}\right).
\end{equation}
Performing an additional change of variables $q = l/n$, which introduces the congruence condition $l \equiv n^2 \pmod{4rn/e_2}$, and detecting this congruence condition via
\begin{equation}\label{deltaq(l)}
\delta_{4rn/e_2}(l-n^2)=\frac{e_2}{4rn}\sum_{c|4rn/e_2}
\mathop{{\sum}^*}_{a \Mod{c}}e\left(\frac{a(l-n^2)}{c}\right),
\end{equation}
we get
\begin{multline}\label{Sigma21 eq2}
\Sigma_{2,1}(r;h)=\frac{1}{\sqrt{\pi}}\sum_{e|r^2}\frac{1}{\sqrt{re_1}}\sum_{n\ll \frac{rT^{-1+\epsilon}}{e}}\sum_{c|4rn/e_2}
\mathop{{\sum}^*}_{a \Mod{c}}e\left(\frac{-an^2}{c}\right)\\
\sum_{n^2T^{2-\epsilon}\ll l\ll nrT^{1+\epsilon}/e}
\frac{\Zag_{l}(1/2)}{\sqrt{l^2-n^4}}
I\left(e\frac{l-n^2}{4rn}, 2\frac{l+n^2}{l-n^2}\right)e\left(\frac{al}{c}\right),
\end{multline}
\begin{multline}\label{Sigma22 eq2}
\Sigma_{2,2}(r;h)=
\frac{1}{\sqrt{\pi}}\sum_{e|r^2}\frac{1}{\sqrt{re_1}}\sum_{n\ll \frac{rT^{1+\epsilon}}{eG^2}}\sum_{c|4rn/e_2}
\mathop{{\sum}^*}_{a \Mod{c}}e\left(\frac{-an^2}{c}\right)\\
\sum_{n^2G^{2-\epsilon}\ll l\ll \min(nrT^{1+\epsilon}/e,n^2T^{2-\epsilon})}
\frac{\Zag_{l}(1/2)}{\sqrt{l^2-n^4}}
I\left(e\frac{l-n^2}{4rn}, 2\frac{l+n^2}{l-n^2}\right)e\left(\frac{al}{c}\right).
\end{multline}
%%%%%%%%%%%%%%%
Let us first consider \eqref{Sigma22 eq2}. In that case we perform a smooth partition of unity in $l$ variable and apply \eqref{2mom SIN I eq1}, obtaining
\begin{multline}\label{Sigma22 eq3}
\Sigma_{2,2}(r;h)=
\frac{G\sqrt{T}}{\sqrt{2\pi}}\sum_{e|r^2}\frac{1}{\sqrt{re_1}}\sum_{n\ll\frac{rT^{1+\epsilon}}{eG^2}}\frac{1}{\sqrt{n}}\sum_{c|4rn/e_2}
\mathop{{\sum}^*}_{a \Mod{c}}e\left(\frac{-an^2}{c}\right)\\
\sum_{n^2G^{2-\epsilon}\ll L\ll \min(nrT^{1+\epsilon}/e,n^2T^{2-\epsilon})}\sum_{l=-\infty}^{\infty}
\frac{\Zag_{l}(1/2)}{\sqrt{l}}g_2(l)e\left(\frac{al}{c}\right),
\end{multline}
where
\begin{equation}\label{g2 def}
g_2(l)=\frac{l^{1/4}}{\sqrt{l-n^2}}U\left(\frac{l}{L}\right)
\exp(-2iTA(l))\exp\left(-G^2A(l)^2\right)V\left(e\frac{l-n^2}{4rn},T\right),
\end{equation}
with $A(l)=\arcosh\frac{l+n^2}{l-n^2}$. The next step is to apply the Voronoi summation formula (see Section \ref{sec:Voronoi}) to the sum over $l$ in \eqref{Sigma22 eq3}. To this end, we split the sum over $c$ in \eqref{Sigma22 eq3} into three cases:
\begin{equation}\label{c cases}
c\equiv0\Mod{4},\quad (c,2)=1,\quad c\equiv2\Mod{4}.
\end{equation}
As a result,
\begin{equation}\label{Sigma22 toSigma22j}
\Sigma_{2,2}(r;h)=
\frac{G\sqrt{T}}{\sqrt{2\pi}}\sum_{e|r^2}\frac{1}{\sqrt{re_1}}\sum_{n\ll \frac{rT^{1+\epsilon}}{eG^2}}\frac{1}{\sqrt{n}}
\sum_{n^2G^{2-\epsilon}\ll L\ll \min(nrT^{1+\epsilon}/e,n^2T^{2-\epsilon})}
\sum_{j=0}^2\Sigma_{2,2}^{(j)}(r,n,L),
\end{equation}
where
\begin{equation}\label{Sigma22j def}
\Sigma_{2,2}^{(j)}(r,n,L)=
\sum_{\substack{c|4rn/e_2\\c\equiv \pm j\Mod{4}}}
\mathop{{\sum}^*}_{a \Mod{c}}e\left(\frac{-an^2}{c}\right)
\sum_{l=-\infty}^{\infty}
\frac{\Zag_{l}(1/2)}{\sqrt{l}}g_2(l)e\left(\frac{al}{c}\right),
\end{equation}
Note that setting $n = cq$ in \eqref{g2 def} under the assumption that $e = r$ yields the function studied in \cite[(4.22)]{Frolsym2}; therefore, the analysis of the integral transforms \eqref{Voronoi MTeq0}, \eqref{phi hat+ to Phipm def0}, and \eqref{phi hat- to Phipm def0} is highly analogous. We first show that the main terms arising from the Voronoi summation formula are negligible. The following lemma serves as the  analogue of \cite[Lemma 9]{Frolsym2}.

%%%%%%%%%%%%%%%%%%%%%%%%%%%%%%%%%%%%%%%%%%%%%
\begin{lem}\label{lem:g2 VoronoiMT}
For $\alpha=1,2,4$ the following estimate holds:
\begin{equation}\label{Res(g2) est0}
\Resid(g_2;\alpha c)\ll T^{-A}.
\end{equation}
\end{lem}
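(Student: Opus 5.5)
Since $g_2$ is supported on $l>0$ (because $U(l/L)$ localizes $l\asymp L>0$), only the first integral in \eqref{Voronoi MTeq0} survives. Substituting $y=l$, we get
\begin{equation}
\Resid(g_2;\alpha c)=\frac{1}{\alpha c\sqrt{2}}\int_0^{\infty}\frac{g_2(y)}{\sqrt{y}}\left(\log\frac{2y}{\pi\alpha^2c^2}-\frac{\pi}{2}+3\gamma\right)dy .
\end{equation}
The plan is to show that this integral is negligible by repeated integration by parts, via Lemma \ref{Lemma BKY}, applied to the oscillatory factor $\exp(-2iTA(y))$ with $A(y)=\arcosh\frac{y+n^2}{y-n^2}$. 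All other factors are non-oscillatory: the amplitude $y^{-1/4}(y-n^2)^{-1/2}U(y/L)\exp(-G^2A(y)^2)V(e\frac{y-n^2}{4rn},T)\log(\cdots)$. This is the same mechanism as in \cite[Lemma 9]{Frolsym2}, to which $g_2$ reduces when $n=cq$ and $e=r$.

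First, the phase. On the support $y\asymp L$ with $L\gg n^2G^{2-\epsilon}$, the ratio $x=\frac{y+n^2}{y-n^2}$ satisfies $x-2\asymp n^2/y$, hence $A(y)\asymp n/\sqrt{y}$. Differentiating, $A'(y)\asymp n y^{-3/2}$, and more generally $A^{(i)}(y)\ll n y^{-1/2-i}$. Thus $f(y)=-TA(y)/\pi$ satisfies $|f'|\gg R:=Tn L^{-3/2}$ and $f^{(i)}\ll Y/P^i$ with $Y=Tn L^{-1/2}$ and $P=L$.

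Second, the amplitude. With $V=L$, each derivative of $U(y/L)$, of the power factors, and of the logarithm costs $1/L$. For the Gaussian factor, $\frac{d}{dy}G^2A^2\ll G^2n^2L^{-2}$; since $G^2n^2/L\ll T^{\epsilon}$ on the range $L\gg n^2G^{2-\epsilon}$, each derivative costs at most $T^{\epsilon}/L$. For $V(e\frac{y-n^2}{4rn},T)$, \eqref{Vapproximation} (or the Mellin representation \eqref{approx.fun.eq.Vdef}) shows that derivatives in $y$ cost $T^{\epsilon}/y$. So the amplitude has $V=L T^{-\epsilon}$.

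Applying Lemma \ref{Lemma BKY}, the relevant ratios are
\begin{equation}
\frac{1}{RV}\asymp\frac{T^{\epsilon}L^{1/2}}{Tn},\qquad \frac{1}{RP}\asymp\frac{L^{1/2}}{Tn},\qquad \frac{Y}{R^2P^2}\asymp\frac{L^{1/2}}{Tn}.
\end{equation}
Because $L\ll n^2T^{2-\epsilon}$, we have $L^{1/2}\ll nT^{1-\epsilon/2}$, so all three ratios are $\ll T^{-\epsilon/2}$ after shrinking the $T^\epsilon$ in $V$ appropriately. Taking $A$ large, together with the trivial polynomial bound on the size of the amplitude and on $L$, yields \eqref{Res(g2) est0}. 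The bound is uniform in $\alpha\in\{1,2,4\}$, since $\alpha c$ enters only through the constant $1/(\alpha c)$ and a constant inside the logarithm.

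The main obstacle is the bookkeeping of the epsilons at the upper end of the range, $L\approx n^2T^{2-\epsilon}$, where the saving $L^{1/2}/(Tn)$ is only a power $T^{-\epsilon/2}$. This must dominate the $T^{\epsilon'}$ losses from the derivatives of $V$ and of the Gaussian. I would handle this by choosing the $\epsilon$ in the definition of the $\Sigma_{2,2}$ range (inherited from \eqref{2mom SIN I eq1}) larger than the implicit $\epsilon'$ losses. Alternatively, I would note that $V$'s derivatives in fact cost only $O(1/y)$ uniformly, which removes that loss entirely.
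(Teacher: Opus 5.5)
Your proposal is correct and matches the paper's proof. The paper rescales $y\to Ly$ and applies Lemma \ref{Lemma BKY} to the phase $-2TA(y)$ with $R=Y=Tn/\sqrt{L}$ and $P=1$. This is exactly your choice $R=TnL^{-3/2}$, $Y=TnL^{-1/2}$, $P=L$ in the unscaled variable, and the paper likewise concludes from $L\ll n^2T^{2-\epsilon}$.

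Two small remarks. For $x=\frac{y+n^2}{y-n^2}$ one has $x-1\asymp n^2/y$, not $x-2$; your conclusion $A(y)\asymp n/\sqrt{y}$ is still right. Also, the Gaussian factor loses nothing per derivative, because $\lambda^k e^{-c\lambda}\ll_k 1$ with $\lambda=G^2n^2/L$. So the $\epsilon$-bookkeeping you worry about is harmless.
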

\begin{proof}
It follows from \eqref{Voronoi MTeq0} and \eqref{g2 def} that
\begin{equation}\label{Res(g2) eq1}
\Resid(g_2;\alpha c)\ll \frac{L^{1/4}}{c}\int_0^{\infty}
f_2(y)\exp(-2iTA(y))
\left(\log(cL)+\log(y)\right)dy,
\end{equation}
where
\begin{equation}\label{g2 f2 def}
f_2(y)=\frac{U(y)}{y^{1/4}\sqrt{y-n^2/L}}\exp\left(-G^2A(y)^2\right)V\left(eL\frac{y-n^2/L}{4rn},T\right),
\end{equation}
\begin{equation}\label{Res(g2) Ay def}
A(y)=\arcosh\frac{y+n^2/L}{y-n^2/L}.
\end{equation}
To estimate this integral, we apply  Lemma \ref{Lemma BKY} with  (cf. \cite[(5.3),(5.4)]{Frolsym2})
\begin{equation}\label{BKYcond1}
R=Y=\frac{Tn}{\sqrt{L}},\, P=1,\, X=T^{\epsilon},\, V=T^{-2\epsilon}.
\end{equation}
Since $L\ll n^2T^{2-\epsilon}$, we prove \eqref{Res(g2) est0}.
\end{proof}
%%%%%%%%%%%%%%%%%%%%%%%%%%%%%%%%%%%%%%%%%%%%%%%
Next, we obtain the following analogue of \cite[Lemma 10]{Frolsym2}.

%%%%%%%%%%%%%%%%%%%%%%%%%%%%%%%%%%%%%%%%%%%%%%%%%%%%%%%%%%%%%%
\begin{lem}\label{lem g2hat large n}
Let $\alpha$ be some fixed positive number. The following estimate holds:
\begin{equation}\label{g2hat- est0}
\widehat{g_2}\left(-\frac{\alpha m}{c^2}\right)\ll T^{-A}.
\end{equation}
Furthermore, for $m\gg T^{\epsilon}(Tnc/L)^2$ and $m\ll T^{\epsilon}c^2/L$ we have
\begin{equation}\label{g2hat+ est0}
\widehat{g_2}\left(\frac{\alpha m}{c^2}\right)\ll T^{-A}.
\end{equation}
\end{lem}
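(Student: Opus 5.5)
We will write $\widehat{g_2}(\pm y)$ via \eqref{phi hat+ to Phipm def0}, \eqref{phi hat- to Phipm def0} and use that $g_2$ is supported on $l\asymp L>0$, so only $\Phi^{(+,+)}$ and $\Phi^{(-,+)}$ occur. As in Lemma \ref{lem:g2 VoronoiMT}, substitute $x=Ly$. Then
\[
\widehat{g_2}(\pm\alpha m/c^2)=\int_0^\infty \frac{g_2(Ly)}{y}\Phi^{(\pm,+)}\left(\frac{\alpha mLy}{c^2}\right)dy,
\]
where $g_2(Ly)=L^{-1/4}f_2(y)\exp(-2iTA(y))$ with $f_2,A$ as in \eqref{g2 f2 def}, \eqref{Res(g2) Ay def}. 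The amplitude $f_2$ is supported on $y\asymp1$ and satisfies $f_2^{(j)}\ll T^{\epsilon}T^{2\epsilon j}$; the factor $\exp(-G^2A^2)$ is harmless since $G^2A(y)^2\ll T^{\epsilon}$ in this range. The phase $TA(y)$ has derivatives of size $Tn/\sqrt{L}$ at every order: $A(y)\approx 2n/\sqrt{Ly}$ because $n^2/L$ is small. This is exactly the data \eqref{BKYcond1}.

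For the minus sign, $\Phi^{(-,+)}(x)=2\sqrt{x}K_0(2\sqrt{x})/\Gamma^2(1/4)$. If $\alpha mL/c^2\gg T^{\epsilon}$, the exponential decay of $K_0$ already gives $T^{-A}$. Otherwise $K_0$ is smooth, with derivatives in $y$ bounded up to logarithms (its $\log$ singularity only enters at $y=0$, outside the support). Then Lemma \ref{Lemma BKY} with $R=Y=Tn/\sqrt L$, $P=1$ applies, because $Tn/\sqrt L\gg T^{\epsilon}$ follows from $L\ll n^2T^{2-\epsilon}$. This gives \eqref{g2hat- est0} for all $m$.

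For the plus sign, $\Phi^{(+,+)}(x)=-\sqrt{x/2}\,(Y_0-J_0)(2\sqrt{x})$. When $m\ll T^{\epsilon}c^2/L$ the argument $z=\alpha mLy/c^2$ is $\ll T^{\epsilon}$, and $\Phi^{(+,+)}(z)$ is non-oscillatory. We treat it as part of the amplitude: by the power series, $y$-derivatives cost at most $T^{\epsilon}$ each, the $\log z$ term in $Y_0$ being harmless for $y\asymp1$. Lemma \ref{Lemma BKY} with the same parameters then gives $T^{-A}$.

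When $z\gg T^{\epsilon}$, we use the Hankel asymptotic expansion $(Y_0-J_0)(2\sqrt z)=z^{-1/4}\sum_\pm e^{\pm 2i\sqrt z}W_\pm(z)$, with $W_\pm$ symbolic. This produces phases
\[
f_\pm(y)=-\frac{TA(y)}{\pi}\pm\frac{\sqrt{\alpha mLy}}{\pi c}.
\]
The derivative of the first term is $\asymp Tn/\sqrt L$ and that of the second is $\asymp \sqrt{mL}/c$. Under $m\gg T^{\epsilon}(Tnc/L)^2$ we get $\sqrt{mL}/c\gg T^{\epsilon/2}Tn/\sqrt L$, so $|f_\pm'|\gg R:=\sqrt{mL}/c$ with no stationary point. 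Taking $Y=R$ and $P=1$ (the $T$-part has $Y'=Tn/\sqrt L\le R$), Lemma \ref{Lemma BKY} gives $(1/R)^{A}$ up to $T^{\epsilon}$ powers, hence $T^{-A}$. The truncation error of the Hankel expansion is $O(z^{-N})$, which is negligible.

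The main obstacle is verifying the derivative bounds for $TA(y)$ uniformly, including that $A(y)$ is smooth and comparable to $n/\sqrt{Ly}$ on the support. This is where $l\gg n^2G^{2-\epsilon}$ is used: it makes $n^2/L$ small, so the expansion of $\arcosh$ near $1$ holds. These computations mirror \cite[(5.3),(5.4)]{Frolsym2} and \cite[Lemma 10]{Frolsym2}, and we will cite them rather than redo them.
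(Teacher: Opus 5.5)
Your proposal is correct. In three of the four cases it follows the paper's proof exactly:
\begin{itemize}
\item for $\widehat{g_2}(-\alpha m/c^2)$, exponential decay of $K_0$ when $\alpha mL/c^2\gg T^{\epsilon}$;
\item in the complementary range, Lemma \ref{Lemma BKY} with $R=Y=Tn/\sqrt L$, $P=1$, using that $L\ll n^2T^{2-\epsilon}$;
\item the same application of Lemma \ref{Lemma BKY} for $\widehat{g_2}(\alpha m/c^2)$ when $m\ll T^{\epsilon}c^2/L$.
\end{itemize}

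The genuine difference is the range $m\gg T^{\epsilon}(Tnc/L)^2$.

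\textbf{The paper's route.} It never expands the Bessel function there. It integrates by parts $j$ times via the identity \eqref{Harcos est}. Each step trades a derivative of $f_2(x)\exp(-2iTA(x))$, of size $Tn/\sqrt L$, for a factor $(Ly)^{-1/2}$ gained by raising the Bessel index. This gives $\widehat{g_2}(y)\ll y^{1/4}\bigl(Tn/(L\sqrt y)\bigr)^{j}$.

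\textbf{Your route.} You insert the Hankel expansion and split into the phases $-TA(y)/\pi\pm\sqrt{\alpha mLy}/(\pi c)$. You then observe that the Bessel frequency $\sqrt{mL}/c$ exceeds $Tn/\sqrt L$ by a factor $T^{\epsilon/2}$, so neither phase has a stationary point, and Lemma \ref{Lemma BKY} applies with $R=Y=\sqrt{mL}/c$.

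\textbf{What each buys.} The paper's argument treats $J_0$ and $Y_0$ uniformly. It has no truncation remainder, and it gives explicit polynomial decay in $y$, which is convenient when summing over $m$ afterwards. Yours is exactly the framework the paper itself uses in Lemma \ref{lem g2hat asympt}. It makes transparent that the threshold $\sqrt m\asymp Tnc/L$ is precisely where the $-$ phase acquires a saddle point. The price is keeping track of two things:
\begin{itemize}
\item the Hankel remainder;
\item the amplitude growth $z^{1/4}$ coming from the factor $\sqrt{z}$ in $\Phi^{(+,+)}$.
\end{itemize}
You did not mention the second explicitly, but both are harmless, since $z\gg T^{2\epsilon}$ in this range and $R\asymp\sqrt z$.

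\textbf{A fix to one justification.} For $1\ll z\ll T^{\epsilon}$ the power series of $J_0$ and $Y_0$ is not a good tool for bounding derivatives, because it involves massive cancellation. Use instead
\[
\frac{d}{dy}B_0(2\sqrt{\kappa y})=-\sqrt{\kappa/y}\,B_1(2\sqrt{\kappa y})
\]
together with the standard recurrences. Each $y$-derivative then costs at most $\max(1,\sqrt\kappa)\ll T^{\epsilon/2}$ on $y\asymp1$, which is what the paper's appeal to the Bessel asymptotics amounts to.

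\textbf{A typo.} $\Phi^{(-,+)}$ carries $\Gamma^2(3/4)$, not $\Gamma^2(1/4)$; this is irrelevant to the bound.
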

\begin{proof}
It follows from \eqref{g2 def}, \eqref{phi hat+ to Phipm def0}, \eqref{phi hat- to Phipm def0}, \eqref{Phi++--def} and  \eqref{Phi+--+def} that
\begin{equation}\label{g2 hat- est1}
\widehat{g_2}\left(-y\right)\ll L^{1/4}\sqrt{y}
\int_0^{\infty}f_2(x)\exp(-2iTA(x))K_0(2\sqrt{xLy})dx,
\end{equation}
\begin{equation}\label{g2 hat+ est1}
\widehat{g_2}\left(y\right)\ll L^{1/4}\sqrt{y}
\int_0^{\infty}f_2(x)\exp(-2iTA(x))B_0(2\sqrt{xLy})dx,
\end{equation}
where $f_2(x)$ is defined in \eqref{g2 f2 def}, $A(x)$ is given by \eqref{Res(g2) Ay def}, and $B_0$ is either $Y_0$ or $J_0$.
Due to the exponential decay of the $K$-Bessel function, we obtain the estimate \eqref{g2hat- est0} when $m \gg c^2 T^{\epsilon} / L$. In the complementary range $m \ll c^2 T^{\epsilon} / L$, we bound both \eqref{g2 hat- est1} and \eqref{g2 hat+ est1} by applying Lemma \ref{Lemma BKY}. Note that the only structural difference between \eqref{g2 hat- est1}, \eqref{g2 hat+ est1} and \eqref{Res(g2) eq1} is the presence of the Bessel functions. Since $Ly \ll T^{\epsilon}$, it follows from \cite[(10.6.7), (10.7.3)--(10.7.5)]{HMF} and \cite[(10.29.5), (10.27.3), (10.30.2)]{HMF} that
\begin{equation}\label{g2 hat+- est1}
\frac{d^{j}}{dx^j}\left(B_0(2\sqrt{xLy})+K_0(2\sqrt{xLy})\right)\ll T^{\epsilon}.
\end{equation}
Therefore, as in the proof of Lemma \ref{lem:g2 VoronoiMT}, we apply Lemma \ref{Lemma BKY} with
\begin{equation}\label{BKYconditions 2}
R=Y=\frac{Tn}{\sqrt{L}},\, P=1,\, X=T^{\epsilon},\, V=T^{-2\epsilon},
\end{equation}
thus establishing \eqref{g2hat- est0} for all $m$, as well as \eqref{g2hat+ est0} in the range $m \ll c^2 T^{\epsilon}/L$.
It remains to establish \eqref{g2hat+ est0} in the range $m \gg T^{\epsilon}(Tnc/L)^2$. To this end, we utilize the following result (cf. \cite[Lemma~6.1]{Har}):
\begin{equation}\label{Harcos est}
\int_0^{\infty}F(x)B_0(a\sqrt{x})dx=\pm\left(\frac{2}{a}\right)^j\int_0^{\infty}F^{(j)}(x)
x^{j/2}B_j(a\sqrt{x})dx,
\end{equation}
where $F$ is a smooth compactly supported function. We show that (see \cite[Lemma 10]{Frolsym2})
\begin{multline}\label{g2 hat+ est2}
\widehat{g_2}\left(y\right)\ll \frac{L^{1/4}\sqrt{y}}{(Ly)^{j/2}}
\int_0^{\infty}\frac{d^{j}}{dx^j}\left(f_2(x)\exp(-2iTA(x))\right)x^{j/2}B_j(2\sqrt{xLy})dx\ll\\\ll
\frac{L^{1/4}\sqrt{y}}{(Ly)^{j/2+1/4}}\left(\frac{Tn}{\sqrt{L}}\right)^j\ll
y^{1/4}\left(\frac{Tn}{L\sqrt{y}}\right)^j\ll T^{-A},
\end{multline}
provided that $y\gg (T^{1+\epsilon}n/L)^2$.
\end{proof}
%%%%%%%%%%%%%%%%%%%%%%%%%%%%%%%%%%%%%%%%%%%%%%
It remains to analyze the behavior of $\widehat{g_2}\left(\alpha m/c^2\right)$ in the range $T^{\epsilon}c^2/L \ll m \ll T^{\epsilon}(Tnc/L)^2$. This analysis is carried out in the next lemma, which serves as a simplified variant of \cite[Lemma 10]{Frolsym2}.
%%%%%%%%%%%%%%%%%%%%%%%%%%%%%%%%%%%%%%%%%%%%%%%%
\begin{lem}\label{lem g2hat asympt}
There exist constants  $0<k_1<1$ and $k_2>16$ such that for $\sqrt{m}\le k_1\frac{Tnc}{L}$ and  $\sqrt{m}\ge k_2\frac{Tnc}{L}$  we have
\begin{equation*}\label{g2hat asympt00}
\widehat{g_2}\left(\frac{m}{c^2}\right)\ll T^{-A},
\end{equation*}
and for $k_1\frac{Tnc}{L}<\sqrt{m}<k_2\frac{Tnc}{L}$ we have
\begin{equation}\label{g2hat asympt0}
\widehat{g_2}\left(\frac{m}{c^2}\right)\ll\frac{T^{\epsilon}}{L^{1/4}}.
\end{equation}
\end{lem}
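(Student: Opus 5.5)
The plan is a stationary phase analysis of the oscillatory integral in \eqref{g2 hat+ est1}, with $y=m/c^2$. Write $B_0(2\sqrt{xLy})$ through its large-argument asymptotics. Lemma \ref{lem g2hat large n} already handles $m\ll T^{\epsilon}c^2/L$, so we may assume $xLy\gg T^{\epsilon}$. By \cite[(10.17.3),(10.17.4)]{HMF},
\[
B_0(2\sqrt{xLy})=\sum_{\pm}\frac{e^{\pm 2i\sqrt{xLy}}}{(xLy)^{1/4}}W_{\pm}(xLy),
\]
where each $W_\pm$ is a non-oscillating function with $x^jW_\pm^{(j)}(x)\ll 1$, up to an arbitrarily small error. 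Then $\widehat{g_2}(y)$ becomes a sum of two integrals of the form
\[
\frac{L^{1/4}\sqrt{y}}{(Ly)^{1/4}}\int_0^\infty \widetilde f_2(x)\exp\bigl(i\Phi_\pm(x)\bigr)dx,\qquad
\Phi_\pm(x)=-2TA(x)\pm 2\sqrt{xLy}.
\]
Here $\widetilde f_2$ is $f_2$ times the non-oscillating factor, and it is supported on $x\asymp 1$ because of $U$.

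Next I compute the derivatives. For $x\asymp1$ we have $n^2/L\ll G^{-2+\epsilon}$, so $A(x)=\arcosh\frac{x+n^2/L}{x-n^2/L}\approx 2\frac{n}{\sqrt{L}}x^{-1/2}$. Hence
\[
-2TA'(x)\approx 2\frac{Tn}{\sqrt L}x^{-3/2}\ (>0),\qquad \pm\frac{d}{dx}2\sqrt{xLy}=\pm\sqrt{Ly/x}.
\]
For the sign with no cancellation, $|\Phi'|\gg Tn/\sqrt{L}$, and Lemma \ref{Lemma BKY} with the parameters \eqref{BKYconditions 2} gives $T^{-A}$. For the other sign,
\[
\Phi'(x)\asymp \frac{Tn}{\sqrt L}x^{-3/2}-\sqrt{Ly}\,x^{-1/2}.
\]
Since $x\asymp1$ on the support, this derivative is bounded away from zero by $\gg\max(Tn/\sqrt L,\sqrt{Ly})$ unless $\sqrt{Ly}\asymp Tn/\sqrt L$. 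Substituting $y=m/c^2$, that condition reads $\sqrt m\asymp Tnc/L$. So I choose $k_1$ small and $k_2$ large in terms of the support of $U$ (the paper's $k_2>16$ is presumably from $\operatorname{supp}U\subset[1/2,\ldots]$ and the constant $2$). Outside $(k_1,k_2)$, Lemma \ref{Lemma BKY} with $R\asymp Y\asymp Tn/\sqrt L+\sqrt{Ly}$, $P=1$ and $X,V$ as before gives $T^{-A}$. This uses $Tn/\sqrt{L}\gg T^{\epsilon}$, which holds because $L\ll n^2T^{2-\epsilon}$. The higher derivatives $\Phi^{(i)}\ll R$ are clear because $x\asymp1$.

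In the transition range, the second-derivative test (or the standard stationary phase bound $\int \widetilde f_2 e^{i\Phi}\ll X/\sqrt{|\Phi''|}$) applies. Here $|\Phi''|\asymp Tn/\sqrt L$ near the stationary point, and $\widetilde f_2\ll T^{\epsilon}$ since $\exp(-G^2A^2)$ and $V$ are bounded and $y-n^2/L\asymp1$. This gives
\[
\widehat{g_2}\Bigl(\frac{m}{c^2}\Bigr)\ll T^{\epsilon}L^{1/4}\frac{\sqrt y}{(Ly)^{1/4}}\Bigl(\frac{Tn}{\sqrt L}\Bigr)^{-1/2}
=T^{\epsilon}\frac{y^{1/4}L^{1/4}}{(Tn)^{1/2}}.
\]
Inserting $\sqrt{y}\asymp Tn/L$ turns this into $T^{\epsilon}L^{-1/4}$, which is \eqref{g2hat asympt0}.

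The main obstacle is verifying the derivative bounds for $f_2$ and for the exact phase $A(x)$, rather than its leading approximation, uniformly in the parameters. The weight $\exp(-G^2A(x)^2)$ has derivatives of size $G^2A^2\ll T^{\epsilon}$ because $A\ll T^{\epsilon}/G$. The function $V(eL(x-n^2/L)/(4rn),T)$ is smooth in $x$ with $V=T^{-2\epsilon}$-type derivative bounds. I also need the stationary point to be nondegenerate, with $\Phi''$ of size $Tn/\sqrt{L}$, which holds after the approximation $A\approx 2n/\sqrt{Lx}$ with relative error $O(n^2/L)$. All of this parallels \cite[Lemma 10]{Frolsym2} and \cite[(5.3),(5.4)]{Frolsym2}, and I would verify it following those estimates.
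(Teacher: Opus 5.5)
Your proposal is correct and follows the paper's route. The paper uses the same Bessel asymptotics, producing phases $-2TA(x)\pm 2\sqrt{xLy}$; it applies Lemma \ref{Lemma BKY} to the sign with no saddle point, and for the other sign outside the window $k_1Tnc/L<\sqrt m<k_2Tnc/L$. Inside the window it uses a stationary-phase bound $y^{1/4}L^{1/4}(Tn)^{-1/2}\asymp L^{-1/4}$, which the paper simply delegates to \cite[Lemma 10]{Frolsym2}.

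Two minor points of care:
\begin{itemize}
\item For the $+$ sign, take $R=Y\asymp Tn/\sqrt L+\sqrt{Ly}$, as you already do for the $-$ sign, rather than \eqref{BKYconditions 2}.
\item In the transition range, localize to a neighbourhood of the stationary point $x_0$ before using the second-derivative bound, because $\Phi_-''$ vanishes at $3x_0$, which may lie in the support of $U$.
\end{itemize}
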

\begin{proof}
By virtue of the estimates established in Lemma \ref{lem g2hat large n}, we can henceforth restrict our attention to the range $T^{\epsilon}c^2/L \ll m \ll T^{\epsilon}(Tnc/L)^2$. Consequently, the argument of the Bessel functions is sufficiently large to allow for their representation via the standard asymptotic expansion \cite[Formula 8.451]{GR} (cf. \cite[(5.9)]{Frolsym2}), which yields
\begin{equation}\label{g2 hat+ est3}
\widehat{g_2}\left(y\right)=y^{1/4}\sum_{\pm}
\int_0^{\infty}f_2(x)\exp(-2iTh_{\pm}(x))W(Lxy)\frac{dx}{x^{1/4}}+O\left(\frac{y^{1/4}}{(Ly)^k}\right),
\end{equation}
where
\begin{equation}\label{g2 h def}
h_{\pm}(x)=-TA(x)\pm\sqrt{Lxy},
\end{equation}
and $W(z)$ is a smooth function such that $z^jW^{(j)}(z)\ll1$. Using \eqref{Res(g2) Ay def}, we obtain
\begin{equation}\label{hpm deriv}
h'_{\pm}(x)=\frac{TnL^{1/2}}{(Lx-n^2)x^{1/2}}\pm\frac{\sqrt{Ly}}{2\sqrt{x}}.
\end{equation}
Therefore, in the positive sign case, there is no saddle point, and
\begin{equation}\label{h+ deriv}
h'_{+}(x)\asymp\frac{Tn}{L^{1/2}}+\sqrt{Ly}\ll \frac{T^{1+\epsilon}n}{L^{1/2}},
\end{equation}
where the final inequality follows from
 $y=m/c^2\ll T^{\epsilon}(Tn/L)^2$. Accordingly,  apply Lemma \ref{Lemma BKY} with the parameter assignment
\begin{equation}\label{BKY Lem8.1 conditions}
X=T^{\epsilon},\,R=\frac{T^{1+\epsilon}n}{\sqrt{L}},\,Y=T^{\epsilon}\frac{Tn}{\sqrt{L}},\,P=1,\, V=T^{-\epsilon},
\end{equation}
which establishes that the contribution from the positive sign case is negligible. In the negative sign case, a saddle point exists, and proceeding precisely as in the proof of \cite[Lemma 10]{Frolsym2}, we obtain the estimate
\begin{equation*}
\widehat{g_2}\left(y\right)\ll\frac{T^{\epsilon}}{L^{1/4}}+\frac{L^{1/4}T^{\epsilon}}{Tn}\ll \frac{T^{\epsilon}}{L^{1/4}},
\end{equation*}
thus establishing \eqref{g2hat asympt0}.
\end{proof}
%%%%%%%%%%%%%%%%%%%%%%%%%%%%%%%%%%%%%%%
Since the expressions in \eqref{Sigma22 toSigma22j} and \eqref{Sigma22j def} possess precisely the same structure as the sums in \cite[(8.7)--(8.9)]{BF2momWA2026}, we proceed as in the proof of \cite[Lemma 8.2]{BF2momWA2026}. The crucial difference is that currently, by virtue of Lemma \ref{lem:g2 VoronoiMT}, the main term arising from the Voronoi summation formula is negligible. Therefore, applying Lemmas \ref{lem g2hat large n} and \ref{lem g2hat asympt}, we obtain the following analogue of the estimate \cite[(8.29)]{BF2momWA2026}:
\begin{multline}\label{Sigma22j est1}
\Sigma_{2,2}^{(j)}(r,n,L)\ll T^{\epsilon}
\sum_{\substack{c|4rn/e_2\\c\equiv \pm j\Mod{4}}}
\sum_{l\sim (Tnc/L)^2}
\frac{|\Zag_{l}(1/2)|}{L^{1/4}\sqrt{l}}c^{1/2+\epsilon}\sqrt{(l,n^2,c)}\ll\\\ll
\sum_{\substack{c|4rn/e_2\\c\equiv \pm j\Mod{4}}}\frac{T^{1+\epsilon}nc^{3/2+\epsilon}}{L^{5/4}}\ll
\frac{T^{1+\epsilon}n(rn/e_2)^{3/2+\epsilon}}{L^{5/4}}.
\end{multline}
Substituting \eqref{Sigma22j est1} to \eqref{Sigma22 toSigma22j}, we find that
\begin{multline}\label{Sigma22 est1}
\Sigma_{2,2}(r;h)\ll
G\sqrt{T}\sum_{e|r^2}\frac{1}{\sqrt{re_1}}\sum_{n\ll \frac{rT^{1+\epsilon}}{eG^2}}
\frac{T^{1+\epsilon}\sqrt{n}(rn/e_2)^{3/2+\epsilon}}{(nG)^{5/2-\epsilon}}\ll\\\ll
\frac{T^{2+\epsilon}}{G^{5/2}}\sum_{e|r^2}\frac{(r/e_2)^{3/2+\epsilon}}{\sqrt{ee_1}}
\ll\frac{T^{2+\epsilon}r^{3/2+\epsilon}}{G^{5/2}}.
\end{multline}
%%%%%%%%%%%%%%%%%%%%%%%%%%%%%%%%%%%
We now turn our attention to \eqref{Sigma21 eq2}. Proceeding in a manner analogous to our treatment of \eqref{Sigma22 eq2}, we obtain
\begin{equation}\label{Sigma21 toSigma21j}
\Sigma_{2,1}(r;h)=
\frac{1}{\sqrt{\pi}}\sum_{e|r^2}\frac{1}{\sqrt{re_1}}\sum_{n\ll \frac{rT^{-1+\epsilon}}{e}}
\sum_{n^2T^{2-\epsilon}\ll L\ll nrT^{1+\epsilon}/e}
\sum_{j=0}^2\Sigma_{2,1}^{(j)}(r,n,L),
\end{equation}
where
\begin{equation}\label{Sigma21j def}
\Sigma_{2,1}^{(j)}(r,n,L)=
\sum_{\substack{c|4rn/e_2\\c\equiv \pm j\Mod{4}}}
\mathop{{\sum}^*}_{a \Mod{c}}e\left(\frac{-an^2}{c}\right)
\sum_{l=-\infty}^{\infty}
\frac{\Zag_{l}(1/2)}{\sqrt{l}}g_3(l)e\left(\frac{al}{c}\right),
\end{equation}
\begin{equation}\label{g3 def}
g_3(l)=\frac{\sqrt{l}}{\sqrt{l^2-n^4}}U\left(\frac{l}{L}\right)
I\left(e\frac{l-n^2}{4rn}, 2\frac{l+n^2}{l-n^2}\right).
\end{equation}
The key structural difference between the pairs \eqref{Sigma21 toSigma21j}, \eqref{Sigma21j def} and \eqref{Sigma22 toSigma22j}, \eqref{Sigma22j def} is that the weight function $g_3(l)$ does not oscillate, unlike $g_2(l)$. Consequently, the analysis of the integral transforms of $g_3(l)$ in the Voronoi summation formula is performed in a slightly different manner, mirroring the computations in \cite[Lemma 8.1]{BF2momWA2026}.

%%%%%%%%%%%%%%%%%%%%%%%%%%%%%%%%%%%%%%%%%
\begin{lem}\label{lem:g3 est}
For $m \in \mathbb{Z}$, $L \gg n^2 T^{2-\epsilon}$, $\alpha \in \{\pi^2, 4\pi^2\}$, and any $A > 1$, it holds that
\begin{equation}\label{g3 est1}
\widehat{g_3}\left(\frac{\alpha m}{c^2}\right)\ll\frac{T^{1+\epsilon}G}{(L|m|/c^2)^{A}} \quad\hbox{if}\quad \frac{L|m|}{c^2}\gg T^{\epsilon},
\end{equation}
\begin{equation}\label{g3 est2}
\widehat{g_3}\left(\frac{\alpha m}{c^2}\right)\ll T^{1+\epsilon}G\sqrt{\frac{|m|}{c^2}}.
\end{equation}
\end{lem}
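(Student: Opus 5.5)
The plan is to treat $\widehat{g_3}$ exactly like the non-oscillatory weight in \cite[Lemma 8.1]{BF2momWA2026}. The inputs are the definitions \eqref{phi hat+ to Phipm def0}, \eqref{phi hat- to Phipm def0} with the kernels \eqref{Phi++--def}, \eqref{Phi+--+def}, together with the size and smoothness of $g_3$. Since $g_3$ is supported on $l>0$, only $\Phi^{(\pm,+)}$ enter. Write $y=\alpha m/c^2$ and substitute $x=Lu$, giving
\begin{equation*}
\widehat{g_3}(\pm|y|)=\int_0^\infty \frac{g_3(Lu)}{u}\,\Phi(L|y|u)\,du ,
\end{equation*}
with $\Phi$ being $\Phi^{(+,+)}$ or $\Phi^{(-,+)}$.

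The first step is to control $g_3$ and its derivatives. On the support, $L\gg n^2T^{2-\epsilon}$, so $x=2\frac{l+n^2}{l-n^2}$ satisfies $x-2\asymp n^2/L\ll T^{-2+\epsilon}$. Hence \eqref{I(m,x)x=2 est} gives $I\ll T^{1+\epsilon}G$, the logarithm being harmless. Together with $\sqrt{l}/\sqrt{l^2-n^4}\asymp L^{-1/2}$, this yields $g_3(Lu)\ll T^{1+\epsilon}G L^{-1/2}$.

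For derivatives in $u$, go back to the representation \eqref{eq:integralI}--\eqref{I_t(x)def}. In $I_t(x)$ with $x$ near $2$, the map $u\mapsto x$ is smooth. The dependence of $V$ on $e(l-n^2)/(4rn)$ is also smooth, with $y\partial_y V$ bounded by $T^\epsilon$ by \eqref{approx.fun.eq.Vdef}. So I expect $u^j\partial_u^j g_3(Lu)\ll T^{1+\epsilon}GL^{-1/2}$, i.e.\ $g_3$ is flat on scale $L$ up to $T^\epsilon$ losses. This is the main obstacle. The bound \eqref{I(m,x)x=2 est} only controls size, and the $\log(T\sqrt{x-2})$ singularity must not worsen under differentiation. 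I would write $x-2=4n^2/(l-n^2)$, note that $(x-2)\partial_x$ acts boundedly on $\log(T\sqrt{x-2})$-type functions, and use that $\partial_u$ produces $(x-2)\partial_x$ up to bounded factors.

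For \eqref{g3 est1}, assume $L|m|/c^2\gg T^\epsilon$. For the $K_0$ kernel, exponential decay gives the bound immediately. For the $J_0,Y_0$ kernels, apply \eqref{Harcos est} repeatedly. Each application gains a factor $(L|y|)^{-1/2}$ and costs a derivative, which costs $O(T^\epsilon)$ by the flatness above. Using $B_j(z)\ll z^{-1/2}$ and choosing $j$ large in terms of $A$ gives $T^{1+\epsilon}G(L|m|/c^2)^{-A}$. Here the prefactor $\sqrt{L}\cdot L^{-1/2}$ is absorbed.

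For \eqref{g3 est2}, use the small-argument bound $\Phi(z)\ll \sqrt{z}\,(1+|\log z|)$ for both kernel types, together with $\Phi(z)\ll z^{1/4}$ for $z\ge1$. Then
\begin{equation*}
\widehat{g_3}(y)\ll \int_{u\asymp1}T^{1+\epsilon}GL^{-1/2}\sqrt{L|y|}\,du\ll T^{1+\epsilon}G\sqrt{|m|/c^2}.
\end{equation*}
The logarithm is absorbed into $T^\epsilon$ in the range $L|y|\ll T^\epsilon$; in the complementary range, \eqref{g3 est1} is stronger.
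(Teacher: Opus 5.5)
Your outline matches the paper on the routine parts: the $K_0$ kernel via exponential decay and \eqref{I(m,x)x=2 est}, the bound \eqref{g3 est2} via small-argument Bessel estimates, and \eqref{g3 est1} for the $J_0,Y_0$ kernels by iterating \eqref{Harcos est}. However, the step that carries the content of the lemma is only asserted. That step is the flatness bound $u^j\partial_u^j g_3(Lu)\ll_j T^{j\epsilon}\,T^{1+\epsilon}GL^{-1/2}$.

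The estimate \eqref{I(m,x)x=2 est} is a pure size bound and says nothing about derivatives. Your remark that $(x-2)\partial_x$ acts boundedly on $\log(T\sqrt{x-2})$-type functions presupposes that $I(m,x)$ is such a function in a derivative-stable sense, and that is exactly what has to be proved. Near $x=2$, $I_t(x)$ is a Bessel-type function of $t\sqrt{x-2}$. It is logarithmic when $t\sqrt{x-2}\ll 1$ and oscillates with frequency $\asymp t$ in $\sqrt{x-2}$ beyond that, all the way up to $t\sqrt{x-2}\ll T^{\epsilon/2}$. You need derivative bounds uniform in $t\asymp T$, and these rely on genuine cancellation: termwise estimation of the local expansion at $4/x^2=1$ loses a factor of exponential size in $t\sqrt{x-2}$.

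Going back to \eqref{I_t(x)def} does not supply these bounds either. On vertical lines the integrand there is only of size $|\Im w|^{-1}$, because the leading phases cancel exactly at $x=2$, which is where the logarithm comes from. So inserting $w^j$ through $(x\partial_x)^j$ destroys convergence. In any case $x\partial_x$ does not produce the factors $x-2$ you need.

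The missing ingredient is what the paper uses. Rewrite $I$ via \eqref{integralIgeq2} as a $t$-integral of ${}_2F_1(1/4+it,3/4+it;1+2it;X)$ with $X=\bigl(\frac{u-n^2/L}{u+n^2/L}\bigr)^2$. Then exploit that $Y_t(X)=X^{1/2+it}(1-X)^{1/2}{}_2F_1(\ldots;X)$ solves the second-order equation \eqref{Y dif eq}.
\begin{itemize}
\item Since $1-X\asymp n^2/L\ll T^{\epsilon-2}$ and $t\asymp T$ on the effective support of $h$, one has $t^2(1-X)\ll T^{\epsilon}$.
\item Hence the coefficient in \eqref{Y dif eq} is $\ll T^{\epsilon}(1-X)^{-2}$, and each $X$-derivative of it costs a further factor $(1-X)^{-1}$.
\item Using the equation to express $Y_t^{(k)}$, $k\ge 2$, through $Y_t$ and $Y_t'$, each $X$-derivative of $Y_t$ costs at most $T^{\epsilon}(1-X)^{-1}$.
\item Since $\partial_u X\asymp n^2/L\asymp 1-X$, each $u$-derivative then costs only $T^{\epsilon}$.
\end{itemize}
This is exactly the input needed for the repeated use of \eqref{Harcos est}, after which the paper concludes as in Lemma 8.1 of the weight-aspect paper it cites. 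Without this, or an equivalent uniform Bessel-type expansion valid for derivatives, the $J_0,Y_0$ case of \eqref{g3 est1} is not established.

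A smaller bookkeeping point: the true cost per $u$-derivative is about $\max(1,Tn/\sqrt{L})\le T^{\epsilon/2}$, while each step of \eqref{Harcos est} gains $(L|y|)^{-1/2}$. So the threshold exponent in \eqref{g3 est1} must be taken strictly larger than the $\epsilon$ in $L\gg n^2T^{2-\epsilon}$. With the accounting you wrote (cost $T^{\epsilon}$ per step, gain $T^{-\epsilon/2}$), the iteration would not close.
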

\begin{proof}
Let $y: = \alpha m / c^2$. It follows from \eqref{g3 def}, \eqref{phi hat+ to Phipm def0}, \eqref{phi hat- to Phipm def0}, \eqref{Phi++--def} and  \eqref{Phi+--+def} that
\begin{equation}\label{g3 hat est1}
\widehat{g_3}\left(y\right)\ll \sqrt{y}
\int_0^{\infty}g_3(x)B_0(2\sqrt{xy})\frac{dx}{\sqrt{x}},
\end{equation}
where $B_0$ denotes the $K_0$, $Y_0$, or $J_0$ Bessel function. In the case where $B_0 = K_0$ (which corresponds to $y < 0$), we have
\begin{equation}\label{g3 hat est2}
\widehat{g_3}\left(-y\right)\ll \sqrt{y}
\int_0^{\infty}
\frac{U\left(x\right)}{\sqrt{x^2-n^4/L^2}}
I\left(e\frac{xL-n^2}{4rn}, 2\frac{x+n^2/L}{x-n^2/L}\right)
K_0(2\sqrt{xyL})dx.
\end{equation}
Thus, owing to the exponential decay of the $K$-Bessel function in the range $yL \gg T^{\epsilon}$, combining this behavior with the estimate \eqref{I(m,x)x=2 est} yields \eqref{g3 est1}. In the complementary range $yL \ll T^{\epsilon}$, applying \cite[(10.30.3)]{HMF} and \eqref{I(m,x)x=2 est}, we obtain
\begin{equation}\label{g3 hat est3}
\widehat{g_3}\left(-y\right)\ll T^{1+\epsilon}G\sqrt{y}.
\end{equation}
Similarly using \cite[(10.7.1)]{HMF} we get \eqref{g3 est2} in the case of $y>0$.
It remains to prove \eqref{g3 est1} in the range $y > 0$. In this case, we apply the identity \eqref{Harcos est}. To simplify the analysis of the derivatives of the function $I$ defined in \eqref{g3 def}, we first transform the right-hand side of \eqref{g3 hat est1} by utilizing \eqref{integralIgeq2}:
\begin{multline}\label{g3 hat est4}
\widehat{g_3}\left(y\right)\ll \sqrt{y}
\int_{-\infty}^{\infty}\frac{th(t)}{\cosh(\pi t)}
\frac{\Gamma(1/4+it)\Gamma(3/4+it)}{\Gamma(1+2it)}\sin\left( \pi(1/4-it)\right)\\ \times
\int_0^{\infty}\frac{U\left(x\right)}{\sqrt{x^2-n^4/L^2}}
V\left(e\frac{xL-n^2}{4rn},t\right)
\left(\frac{x-n^2/L}{x+n^2/L}\right)^{2it}\\ \times
{}_2F_{1}\left(\frac{1}{4}+it,\frac{3}{4}+it,1+2it;\frac{(x-n^2/L)^2}{(x+n^2/L)^2} \right)
B_0(2\sqrt{xyL})dxdt.
\end{multline}
To estimate the higher derivatives of the functions appearing in the integrand over $x$ in \eqref{g3 hat est4}, we exploit the fact that the hypergeometric function satisfies a second-order linear differential equation. More precisely, according to \cite[Section~2.7.2, formulas~(7)--(9)]{BE}, the function

\begin{equation}\label{Y def}
Y_t(x)=x^{1/2+it}(1-x)^{1/2}{}_2F_{1}\left(\frac{1}{4}+it,\frac{3}{4}+it,1+2it;x\right)
\end{equation}
satisfies the differential equation
\begin{equation}\label{Y dif eq}
Y_t''+\left(\frac{1+4t^2}{4x^2}+\frac{1}{4(1-x)^2}+\frac{4t^2+5/4}{4x(1-x)}\right)Y_t=0.
\end{equation}
Using \eqref{Y def}, we rewrite \eqref{g3 hat est4} as
\begin{equation}\label{g3 hat est6}
\widehat{g_3}\left(y\right)\ll \sqrt{\frac{L|y|}{n^2}}
\int_{-\infty}^{\infty}\frac{th(t)}{|t|^{1/2}}\int_0^{\infty}F(x)
Y_t\left(\frac{(x-n^2/L)^2}{(x+n^2/L)^2} \right)
B_0(2\sqrt{xyL})dxdt,
\end{equation}
where
\begin{equation}\label{g3 F def}
F(x):=V\left(e\frac{xL-n^2}{4rn},t\right)\frac{U\left(x\right)(x+n^2/L)^{3/2}}{(x-n^2/L)^{3/2}\sqrt{x}}.
\end{equation}
Applying \eqref{Harcos est}, we obtain
\begin{equation}\label{g3 hat est7}
\widehat{g_3}\left(y\right)\ll \frac{\sqrt{L|y|/n^2}}{(L|y|)^{j/2}}
\int_{-\infty}^{\infty}\frac{th(t)}{|t|^{1/2}}\int_0^{\infty}\frac{d^j}{dx^j}\left(F(x)
Y_t\left(\frac{(x-n^2/L)^2}{(x+n^2/L)^2} \right)\right)
x^{j/2}B_j(2\sqrt{xyL})dxdt.
\end{equation}
To estimate the derivatives of $Y_t$, we analyze the expression within the brackets in \eqref{Y dif eq}. Note that the argument of the hypergeometric function in \eqref{g3 hat est4} can be bounded as follows:
\begin{equation}\label{Y arg}
0<1-\frac{(x-n^2/L)^2}{(x+n^2/L)^2}=\frac{4xn^2/L}{(x+n^2/L)^2}\ll\frac{n^2}{L}\ll T^{\epsilon-2}.
\end{equation}
Consequently, the expression within the brackets in \eqref{Y dif eq} is bounded by $T^{\epsilon} / (1-x)^2$. Given that \eqref{g3 hat est7} coincides with \cite[(8.21)]{BF2momWA2026} and \eqref{Y dif eq} shares a highly similar structure with \cite[(8.19)]{BF2momWA2026}, the remainder of the proof of \eqref{g3 est1} proceeds identically to that of \cite[Lemma 8.1]{BF2momWA2026}.
\end{proof}
%%%%%%%%%%%%%%%%%%%%%%%%%%%%%%%%%%%%%%%%
Proceeding as in \cite[Lemmas 8.2--8.4]{BF2momWA2026} and utilizing Lemma \ref{lem:g3 est}, we obtain
\begin{multline}\label{Sigma21 asympt1}
\Sigma_{2,1}(r;h)=\frac{1}{2\sqrt{\pi}}
\sum_{e|r^2}\frac{e_2}{\sqrt{re}}\sum_{n\ll rT^{-1+\epsilon}e^{-1}}
\sum_{q|rn/e_2}\frac{\ups_q(n^2)}{\sqrt{q}}
\sum_{n^2T^{2-\epsilon}\ll L\ll nrT^{1+\epsilon}/e}\int_0^{\infty}g_3(y)\\\times
\left(\log\frac{2y}{\pi (4q)^2}-\frac{\pi}{2}+3\gamma\right)\frac{dy}{\sqrt{y}}+\\
+O\Bigl(\sum_{e|r^2}\frac{1}{\sqrt{re_1}}\sum_{n\ll \frac{rT^{-1+\epsilon}}{e}}
\sum_{n^2T^{2-\epsilon}\ll L\ll nrT^{1+\epsilon}/e}\sum_{c|4rn/e_2}
\sum_{|l|\ll c^2T^{\epsilon}/L}\frac{|\Zag_{l}(1/2)|}{\sqrt{l}}\\\times
T^{1+\epsilon}G\sqrt{\frac{|l|}{c^2}}c^{1/2+\epsilon}\sqrt{(l,n^2,c)}
\Bigr).
\end{multline}
Estimating the error term reveals that it is bounded by
\begin{multline}\label{Sigma21 asympt2}
\sum_{e|r^2}\frac{T^{1+\epsilon}G}{\sqrt{re_1}}\sum_{n\ll \frac{rT^{-1+\epsilon}}{e}}
\sum_{n^2T^{2-\epsilon}\ll L\ll nrT^{1+\epsilon}/e}\sum_{c|4rn/e_2}\frac{c^{3/2+\epsilon}}{L}\ll\\\ll
\sum_{e|r^2}\frac{T^{1+\epsilon}G}{\sqrt{re_1}}\sum_{n\ll \frac{rT^{-1+\epsilon}}{e}}
\frac{(rn/e_2)^{3/2+\epsilon}}{n^2T^2}\ll
\sum_{e|r^2}\frac{T^{1+\epsilon}G}{\sqrt{re_1}}\frac{(r/e_2)^{3/2+\epsilon}(r/e)^{1/2}}{T^{5/2}}\ll\frac{T^{1+\epsilon}Gr^{3/2}}{T^{5/2}}, 
\end{multline}
and therefore,
\begin{multline}\label{Sigma21 asympt3}
\Sigma_{2,1}(r;h)=\frac{1}{2\sqrt{\pi}}
\sum_{e|r^2}\frac{e_2}{\sqrt{re}}\sum_{n\ll rT^{-1+\epsilon}e^{-1}}
\sum_{q|rn/e_2}\frac{\ups_q(n^2)}{\sqrt{q}}\\\times
\sum_{n^2T^{2-\epsilon}\ll L\ll nrT^{1+\epsilon}/e}\int_0^{\infty}g_3(y)
\left(\log\frac{y}{8\pi q^2}-\frac{\pi}{2}+3\gamma\right)\frac{dy}{\sqrt{y}}+
+O\left(\frac{T^{1+\epsilon}Gr^{3/2}}{T^{5/2}}\right).
\end{multline}
Since $G \ll T^{1-\epsilon}$, we deduce from \eqref{SIN=Sigma21+Sigma22}, \eqref{Sigma22 est1}, and \eqref{Sigma21 asympt3} that
\begin{multline}\label{SIN asympt}
\SIN(r;h)=\frac{1}{2\sqrt{\pi}}
\sum_{e|r^2}\frac{e_2}{\sqrt{re}}\sum_{n\ll rT^{-1+\epsilon}e^{-1}}
\sum_{q|rn/e_2}\frac{\ups_q(n^2)}{\sqrt{q}}\\\times
\sum_{n^2T^{2-\epsilon}\ll L\ll nrT^{1+\epsilon}/e}\int_0^{\infty}g_3(y)
\left(\log\frac{y}{8\pi q^2}-\frac{\pi}{2}+3\gamma\right)\frac{dy}{\sqrt{y}}+
+O\left(\frac{T^{2+\epsilon}r^{3/2+\epsilon}}{G^{5/2}}\right).
\end{multline}
To simplify the evaluation of the main term, we proceed as in \cite[Section 10]{BF2momWA2026} and replace $\log y$ by $\log(y-n^2)$. By \eqref{I(m,x)x=2 est} and \cite[(10.4)]{BF2momWA2026}, the error introduced by this substitution is bounded by
\begin{equation}\label{error log change}
\sum_{e|r^2}\frac{e_2}{\sqrt{re}}\sum_{n\ll rT^{-1+\epsilon}e^{-1}}
\sum_{q|rn/e_2}\frac{|\ups_q(n^2)|}{\sqrt{q}}
\frac{T^{1+\epsilon}G}{T^2}\ll \frac{T^{\epsilon}Gr^{1/2}}{T^2}\ll \frac{T^{2+\epsilon}r^{3/2+\epsilon}}{G^{5/2}}.
\end{equation}
Next, we extend the summations over $n$ and $L$ in \eqref{SIN asympt} to the complete ranges. First, we complete the sum over $n$, which is straightforward since the sum over $L$ becomes empty for $n \gg r T^{-1+\epsilon} e^{-1}$. We then enlarge the summation over $L$ to the range $L \gg n^2 T^{2-\epsilon}$ by leveraging the rapid decay of \eqref{Vestimate}. Finally, for $L \ll n^2 G^{2-\epsilon}$, the main term is negligible by virtue of \eqref{2mom SIN I eq1}, and for $n^2 G^{2-\epsilon} \ll L \ll n^2 T^{2-\epsilon}$, it is negligible by virtue of Lemma \ref{lem:g2 VoronoiMT}.
As a result,
\begin{equation}\label{SIN asympt2}
\SIN(r;h)=\frac{1}{2\sqrt{\pi}}
\sum_{e|r^2}\frac{e_2}{\sqrt{re}}\sum_{n=1}^{\infty}
\sum_{q|rn/e_2}\frac{\ups_q(n^2)}{\sqrt{q}}I_{>}(r,n,e)
+O\left(\frac{T^{2+\epsilon}r^{3/2+\epsilon}}{G^{5/2}}\right),
\end{equation}
where (cf. \eqref{g3 def})
\begin{equation}\label{I>def}
I_{>}(r,n,e)=
\int_{n^2}^{\infty}\left(\log\frac{y-n^2}{8\pi q^2}-\frac{\pi}{2}+3\gamma\right)
I\left(e\frac{y-n^2}{4rn}, 2\frac{y+n^2}{y-n^2}\right)\frac{dy}{\sqrt{y^2-n^4}}.
\end{equation}
Note that \eqref{I>def} is an analogue of \cite[(10.7)]{BF2momWA2026}.   Using \eqref{I(m,x) def} and \eqref{eq:integralI},  we show that
\begin{multline}\label{I>def2}
I_{>}(r,n,e)=\frac{2i}{\pi}\int_{-\infty}^{\infty}\frac{th(t)}{\cosh(\pi t)}V\left(e\frac{y-n^2}{4rn},t\right)
\int_{n^2}^{\infty}\left(\log\frac{y-n^2}{8\pi q^2}-\frac{\pi}{2}+3\gamma\right)\\\times
I_t\left(2\frac{y+n^2}{y-n^2}\right)\frac{dy}{\sqrt{y^2-n^4}}dt.
\end{multline}
Applying the change of variables $y = n^2 x$ and utilizing \eqref{approx.fun.eq.Vdef}, we obtain
\begin{equation}\label{I>(r,n,e) to I>(t,z)}
I_{>}(r,n,e)=\frac{2i}{\pi}\int_{-\infty}^{\infty}\frac{th(t)}{\cosh(\pi t)}
\frac{1}{2\pi i}\int_{(a)}\frac{L_{\infty}(1/2+z,t)}{L_{\infty}(1/2,t)}\zeta(1+2z)G(z)
\left(\frac{en}{4r}\right)^{-z}
I_{>}(t,z)\frac{dz}{z},
\end{equation}
\begin{equation}\label{I>(t,z)def}
I_{>}(t,z)=
\int_{1}^{\infty}\left(\log\frac{(x-1)n^2}{8\pi q^2}-\frac{\pi}{2}+3\gamma\right)
I_t\left(2\frac{x+1}{x-1}\right)\frac{dx}{(x-1)^{1/2+z}(x+1)^{1/2}}.
\end{equation}
Let us introduce the function
\begin{equation}\label{I>0(t,z)def}
I_{>,0}(t,z):=\int_{1}^{\infty}I_t\left(2\frac{x+1}{x-1}\right)\frac{dx}{(x-1)^{1/2+z}(x+1)^{1/2}},
\end{equation}
so that
\begin{equation}\label{I>(t,z) to I>0(t,z)}
I_{>}(t,z)=\left(\log\frac{n^2}{8\pi q^2}-\frac{\pi}{2}+3\gamma-\frac{\partial}{\partial z}\right)I_{>,0}(t,z).
\end{equation}
%%%%%%%%%%%%%%%%%%%%
\begin{rem}\label{rem:it=k-1/2}
Note that \eqref{I_t(x)def} shares a close structural similarity with \cite[(10.8)]{BF2momWA2026}. Specifically, substituting $k-1/2$ for $it$ in \eqref{I_t(x)def} yields \cite[(10.8)]{BF2momWA2026}. Consequently, if we substitute $k-1/2$ for $it$ in \eqref{I>0(t,z)def} and multiply the resulting expression by $(-1)^k \sqrt{\pi}$, we obtain \cite[(10.14)]{BF2momWA2026}.
\end{rem}
%%%%%%%%%%%%%%%%
This observation allows us to establish the following result:
\begin{equation}\label{I>0(t,z) eq0}
I_{>,0}(t,z)=\frac{\sqrt{2}\Gamma^2(z)\Gamma(1/2-z+2it)}{\sqrt{\pi}\Gamma(1/2+z+2it)}\sin(\pi(1/4-it)),\,0<\Re{z}<\Re(1/2+2it),
\end{equation}
which can be proved directly from \eqref{I>0(t,z)def} and \eqref{I_t(x)def} in an identical manner to the proof of \cite[Lemma 10.1]{BF2momWA2026}. For subsequent computations, it is convenient to rewrite \eqref{I>(r,n,e) to I>(t,z)} as
\begin{multline}\label{I>(r,n,e) to I>(t,z)v2}
I_{>}(r,n,e)=\frac{2i}{\pi}\int_{0}^{\infty}\frac{th(t)}{\cosh(\pi t)}
\frac{1}{2\pi i}\int_{(a)}\frac{L_{\infty}(1/2+z,t)}{L_{\infty}(1/2,t)}\zeta(1+2z)G(z)\\\times
\left(\frac{en}{4r}\right)^{-z}
\left(I_{>}(t,z)-I_{>}(-t,z)\right)\frac{dz}{z}.
\end{multline}
It follows from  \eqref{I>0(t,z) eq0} and \cite[(5.11.13)]{HMF} that for $t>0$ 
\begin{multline}\label{I>0(t,z)-I>0(-t,z)}
I_{>,0}(t,z)-I_{>,0}(-t,z)=\\=\sqrt{\frac{2}{\pi}}\Gamma^2(z)\left(\frac{\sin(\pi(1/4-it))}{(1/2+2it)^{2z}}-
\frac{\sin(\pi(1/4+it))}{(1/2-2it)^{2z}}\right)\left(1+O\left(\frac{1+|z|}{1+|t|}\right)\right)=\\=
\sqrt{\frac{2}{\pi}}\frac{\Gamma^2(z)}{(2t)^{2z}}
\left(\sin(\pi(1/4-it))e^{-\pi iz}-\sin(\pi(1/4+it))e^{\pi iz}\right)\left(1+O\left(\frac{1+|z|}{1+|t|}\right)\right)=\\=
\sqrt{\frac{2}{\pi}}\frac{\Gamma^2(z)}{2i(2t)^{2z}}
\left(\sqrt{2}(\cos(\pi z)+\sin(\pi z))e^{\pi t}+\sqrt{2}(\sin(\pi z)-\cos(\pi z))e^{-\pi t}\right)\\\times\left(1+O\left(\frac{1+|z|}{1+|t|}\right)\right)=
\frac{\Gamma^2(z)e^{\pi t}}{i\sqrt{\pi}(2t)^{2z}}\left(\cos(\pi z)+\sin(\pi z)\right)\left(1+O\left(\frac{1+|z|}{1+|t|}\right)\right).
\end{multline}

%%%%%%%%%%%%%%%%%%%%%%%%%%%%%%%%%%%%%%%%%%%%%%%%%%%%%%%%%%%%%%%%%%%%%%%%%%%%%%%%%%%%%%%%%%%
%%%%%%%%%%%%%%%%%%%%%%%%%%%%%%%%%%%%%%%%%%%%%%%%%%%%%%%%%%%%%%%%%%%%%%%%%%%%%%%%%%%%%%%%%%%
\section{Evaluation of $\SZE(r;h)$}\label{sec:SZE}
According to \cite[Lemma 4.3]{BF2mom}, the contribution of the summands satisfying $\frac{ne_2}{mr} \gg T^{-2+\epsilon}$ is negligible. Therefore, we immediately obtain the following analogue of \eqref{Sigma21 eq1}:
\begin{equation}\label{SZE=Sigma11+}
\SZE(r;h)=\Sigma_{1,1}(r;h)+O(T^{-A}),
\end{equation}
\begin{equation}\label{Sigma11 eq1}
\Sigma_{1,1}(r;h)=\frac{4}{\sqrt{\pi}}\sum_{e|r^2}\frac{\sqrt{r}}{\sqrt{e}}\sum_{n\ll \frac{rT^{-1+\epsilon}}{e}}
\sum_{\substack{nT^{2-\epsilon}\ll q\ll rT^{1+\epsilon}/e\\q\equiv -n\Mod{4r/e_2}}}
\frac{\Zag_{-qn}(1/2)}{\sqrt{q^2-n^2}}I\left(e\frac{q+n}{4r}, 2\frac{q-n}{q+n}\right).
\end{equation}
Proceeding in a manner analogous to Section \ref{sec:SIN} (or following the derivation of \cite[(8.10)]{BF2momWA2026}), we obtain (cf. \eqref{Sigma21 toSigma21j}--\eqref{g3 def})
\begin{equation}\label{Sigma11 toSigma11j}
\Sigma_{1,1}(r;h)=
\frac{1}{\sqrt{\pi}}\sum_{e|r^2}\frac{1}{\sqrt{re_1}}\sum_{n\ll \frac{rT^{-1+\epsilon}}{e}}
\sum_{n^2T^{2-\epsilon}\ll L\ll nrT^{1+\epsilon}/e}
\sum_{j=0}^2\Sigma_{1,1}^{(j)}(r,n,L),
\end{equation}
where
\begin{equation}\label{Sigma11j def}
\Sigma_{1,1}^{(j)}(r,n,L)=
\sum_{\substack{c|4rn/e_2\\c\equiv \pm j\Mod{4}}}
\mathop{{\sum}^*}_{a \Mod{c}}e\left(\frac{-an^2)}{c}\right)
\sum_{l=-\infty}^{\infty}
\frac{\Zag_{l}(1/2)}{\sqrt{|l|}}g_1(l)e\left(\frac{al}{c}\right),
\end{equation}
\begin{equation}\label{g1 def}
g_1(l)=\frac{\sqrt{|l|}}{\sqrt{l^2-n^4}}U\left(\frac{-l}{L}\right)
I\left(e\frac{-l+n^2}{4rn}, 2\frac{-l-n^2}{-l+n^2}\right).
\end{equation}
We now establish an analogue of Lemma \ref{lem:g3 est}.
%%%%%%%%%%%%%%%%%%%
\begin{lem}\label{lem:g1 est}
For $m \in \mathbb{Z}$, $L \gg n^2 T^{2-\epsilon}$, $\alpha \in \{\pi^2, 4\pi^2\}$, and any $A > 1$, it holds that
\begin{equation}\label{g1 est1}
\widehat{g_1}\left(\frac{\alpha m}{c^2}\right)\ll\frac{T^{1+\epsilon}G}{(L|m|/c^2)^{A}} \quad\hbox{if}\quad \frac{L|m|}{c^2}\gg T^{\epsilon},
\end{equation}
\begin{equation}\label{g1 est2}
\widehat{g_1}\left(\frac{\alpha m}{c^2}\right)\ll T^{1+\epsilon}G\sqrt{\frac{|m|}{c^2}}.
\end{equation}
\end{lem}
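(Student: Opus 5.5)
The plan is to copy the proof of Lemma \ref{lem:g3 est} almost verbatim, with the roles of $I(x;h)$ for $x\ge 2$ and for $0<x<2$ exchanged. Since $g_1$ is supported on $l<0$, write $l=-Lx$ with $x\asymp 1$. The argument of $I$ is then
\[
2\frac{x-n^2/L}{x+n^2/L}<2,
\]
and it lies within $O(n^2/L)\ll T^{\epsilon-2}$ of $2$.

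The first step is a pointwise bound for $I$ in this range. It should be the analogue of \eqref{I(m,x)x=2 est}:
\[
I\left(me_1e_2,x\right)\ll T^{1+\epsilon}G|\log(T\sqrt{2-x})|, \qquad 0<2-x\ll T^{-2+\epsilon}.
\]
I would get this from \eqref{integralIleq2}, using the behaviour of ${}_2F_1(1/4+it,1/4-it,1/2;x^2/4)$ as $x^2/4\to1^-$. Here $c-a-b=0$, so there is a logarithmic singularity. Via \cite[Lemma 4.3]{BF2mom} (or the connection formula $z\mapsto 1-z$), this gives the bound $\sqrt{t}\,\log(t\sqrt{2-x})$ times $\Gamma$-factors. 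Combined with $\Gamma(1/4+it)\Gamma(1/4-it)\cos(\pi(1/4+it))/\cosh(\pi t)\ll t^{-1/2}$, this yields the displayed bound after integrating against $th(t)V$.

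With the pointwise bound in hand, the trivially-estimated cases follow exactly as before. For $y<0$ in the transform, $\widehat{g_1}$ involves $K_0(2\sqrt{xyL})$. Exponential decay when $L|y|\gg T^\epsilon$, together with the pointwise bound (the logarithm is integrable in $x$), gives \eqref{g1 est1}. The small-argument bound $K_0(z)\ll|\log z|$ together with the factor $\sqrt{y}$ gives \eqref{g1 est2}. For $y>0$, the bound \eqref{g1 est2} follows in the same way from $J_0,Y_0\ll |\log z|$ for small $z$ and $\ll z^{-1/2}$ for large $z$. These kernels are $\Phi^{(-,-)}$ with $J_0,Y_0$ and $\Phi^{(-,+)}$ with $K_0$, so all the relevant Bessel functions appear.

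The main obstacle is \eqref{g1 est1} in the oscillatory range. Here I would repeat the derivative-transfer argument. Insert the integral representation \eqref{integralIleq2}. Then express the hypergeometric factor through a solution of a second-order ODE, namely the analogue of \eqref{Y def}, e.g.
\[
\tilde Y_t(w)=w^{1/4}(1-w)^{1/2}{}_2F_1(1/4+it,1/4-it,1/2;w), \qquad w=x^2/4 .
\]
From the hypergeometric equation (\cite[Section 2.7.2]{BE}) this satisfies an equation $\tilde Y_t''+Q_t(w)\tilde Y_t=0$. The coefficient $Q_t$ has a $t^2$-term that is regular at $w=1$ and a $\frac{1}{4(1-w)^2}$ singular part. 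Because $1-w\ll n^2/L\ll T^{\epsilon-2}$ on the support, $Q_t\ll T^\epsilon/(1-w)^2$, just as after \eqref{Y arg}. Consequently each $x$-derivative of $F\cdot\tilde Y_t$ costs at most a factor $T^{\epsilon}L/n^2$ in the rescaled variable (measured against the pointwise size). Applying \eqref{Harcos est} $j$ times then gains $(L|y|)^{-j/2}$ per derivative. As in the proof of \cite[Lemma 8.1]{BF2momWA2026}, this must be organised so that the singular scale $n^2/L$ is absorbed. The device there is to change variables to $u=(x-n^2/L)/(x+n^2/L)$-type coordinates, in which the weight $F$ is smooth at scale $1$ and the ODE controls $\tilde Y_t$ at scale $1$ up to logarithms. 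I expect verifying this absorption to be the delicate point. I would check it by the same bookkeeping as for \eqref{g3 hat est7}, with the $K_0$ kernel handled identically, since \eqref{Harcos est} holds for it with $K_j$.
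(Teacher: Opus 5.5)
Your outline follows the paper's proof: a logarithmic pointwise bound for $I$ just below $x=2$, trivial Bessel bounds for \eqref{g1 est2} and for the $K_0$ kernel in \eqref{g1 est1}, and, for the oscillatory kernel, the function $Z_t(w)=w^{1/4}(1-w)^{1/2}{}_2F_1(1/4+it,1/4-it;1/2;w)$ (your $\tilde Y_t$ is exactly the paper's $Z_t$), the bound $T^{\epsilon}/(1-w)^2$ for the coefficient in \eqref{Z dif eq}, and \eqref{Harcos est}, with the rest deferred to \cite[Lemma 8.1]{BF2momWA2026}, as in the paper. The gap is in the one genuinely new input, the pointwise bound \eqref{I(m,x)<2 est2}. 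It must hold uniformly for $0<2-x\ll T^{-2+\epsilon}$, i.e.\ for $X:=t\sqrt{2-x}$ as large as $T^{\epsilon/2}$; this happens when $L\asymp n^2T^{2-\epsilon}$. The $z\mapsto 1-z$ connection formula writes ${}_2F_1(1/4+it,1/4-it;1/2;w)$ as $\Gamma(1/2)/|\Gamma(1/4+it)|^2$ times a series in $1-w$. Its coefficients are $(a)_k(b)_k/(k!)^2\approx t^{2k}/(k!)^2$ and $t^2(1-w)\approx X^2$, so the series is essentially the power series of $2K_0(2X)$. Estimating it termwise gives roughly $e^{2X}$, which is acceptable only for $X\ll\epsilon\log T$. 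On $\log T\ll X\ll T^{\epsilon/2}$ the logarithmic bound depends on massive cancellation that your argument does not capture. Nor does \cite[Lemma 4.3]{BF2mom} supply it: in this paper it is invoked only for the negligibility of the terms with $2-x\gg T^{-2+\epsilon}$.

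What is missing is the uniform asymptotic from \cite[Lemmas 4.1--4.2]{BF2mom} that the paper uses. With $a=\arccos(2w-1)$, it gives a main term of size $e^{\pi t}\,\bigl|\Gamma(3/4+it)/\Gamma(1/4+it)\bigr|\,K_0(at)$ plus a term carrying $e^{-\pi t}I_0(at)$. This also exposes a bookkeeping error in your sketch. The prefactor $\Gamma(1/4+it)\Gamma(1/4-it)\cos(\pi(1/4+it))/\cosh(\pi t)$ is $\asymp t^{-1/2}e^{-\pi t}$, not merely $\ll t^{-1/2}$. That factor $e^{-\pi t}$ is exactly what cancels the $e^{\pi t}$ growth of ${}_2F_1$ (already $\Gamma(1/2)/|\Gamma(1/4+it)|^2\asymp t^{1/2}e^{\pi t}$). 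It also makes the $I_0$ term negligible, since $I_0(at)\le e^{\pi t}$. After the cancellation one gets $\int th(t)|K_0(t\arccos(x^2/2-1))V|\,dt\ll T^{1+\epsilon}G|\log(2-x)|$ uniformly, because $K_0(u)\ll 1+|\log u|$ for all $u>0$. With this in hand the rest of your plan goes through.

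Smaller slips:
\begin{itemize}
\item Since $g_1$ lives on $l<0$, the kernels are the other way round from what you wrote: $y>0$ gives $\Phi^{(+,-)}\propto K_0$, and $y<0$ gives $\Phi^{(-,-)}$ with $J_0,Y_0$. This is harmless because you treat both.
\item The $t^2$-term $-t^2/(w(1-w))$ in \eqref{Z dif eq} is not regular at $w=1$; it is $\ll T^{\epsilon}/(1-w)^2$ precisely because $t^2(1-w)\ll T^{\epsilon}$.
\item The scale $n^2/L$ is absorbed in the original variable $x$: there $dw/dx\asymp n^2/L$ offsets the factor $1/(1-w)$ lost per $w$-derivative of $Z_t$. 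In the coordinate $u=(x-n^2/L)/(x+n^2/L)$ it is $F$ that varies on scale $n^2/L$, so your description of that device is inverted.
\end{itemize}
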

\begin{proof}
We first establish an estimate analogous to \eqref{I(m,x)x=2 est} in the range $0 < 2-x \ll t^{-2+\epsilon}$. From \eqref{integralIleq2} and \eqref{I(m,x) def}, it follows that
\begin{equation}\label{I(m,x)<2 est1}
I\left(me_1e_2,x\right)\ll \int_{0}^{\infty}\frac{t^{1/2}h(t)}{\cosh(\pi t)}x^{1/2}
\Bigl|{}_2F_{1}\left(1/4+it,1/4-it,1/2;\frac{x^2}{4} \right)V(me_1e_2,t)\Bigr|dt.
\end{equation}
Let $a:=\arccos(2x-1).$ It follows from \cite[Lemmas 4.1--4.2]{BF2mom} that for $t > 0$, the leading term in the asymptotic formula for the hypergeometric function in \eqref{integralIleq2} is given by
\begin{multline}\label{2F1(1/4,1/4,1/2) asympt1}
{}_2F_{1}\left(1/4+it,1/4-it,1/2;x \right)\sim\frac{e^{\pi i/2}}{\sqrt{2\pi}}\frac{\Gamma(3/4+it)}{\Gamma(1/4+it)}\frac{\sqrt{\arccos(2x-1)}}{(1-x)^{1/4}}\\\times
\left(e^{-\pi i/4-\pi t}K_0(-at)-e^{\pi i/4+\pi t}K_0(at)\right)\sim
\frac{e^{\pi i/2}}{\sqrt{2\pi}}\frac{\Gamma(3/4+it)}{\Gamma(1/4+it)}\frac{\sqrt{\arccos(2x-1)}}{(1-x)^{1/4}}\\\times
\left(-\pi e^{\pi i/4-\pi t}I_0(at)-e^{\pi i/4+\pi t}K_0(at)\right).
\end{multline}
Since \eqref{I(m,x)<2 est1} contains an additional factor of $e^{-\pi t}$, the contribution from the component of \eqref{2F1(1/4,1/4,1/2) asympt1} involving $I_0(at)$ is negligible. Therefore, we obtain
\begin{multline}\label{2F1(1/4,1/4,1/2) asympt2}
{}_2F_{1}\left(1/4+it,1/4-it,1/2;x\right)\sim
\frac{e^{-\pi i/4+\pi t}}{\sqrt{2\pi}}\frac{\Gamma(3/4+it)}{\Gamma(1/4+it)}\frac{\sqrt{\arccos(2x-1)}}{(1-x)^{1/4}}\\\times
K_0(t\arccos(2x-1)).
\end{multline}
Substituting \eqref{2F1(1/4,1/4,1/2) asympt2} into \eqref{I(m,x)<2 est1}, we prove for $0<2-x\ll t^{-2+\epsilon}$ the estimate (cf. \cite[(10.30.3)]{HMF})
\begin{equation}\label{I(m,x)<2 est2}
I\left(me_1e_2,x\right)\ll \int_{0}^{\infty}th(t)
\Bigl|K_0(t\arccos(x^2/2-1))V(me_1e_2,t)\Bigr|dt\ll T^{1+\epsilon}G|\log(2-x)|.
\end{equation}
By applying this estimate, we establish \eqref{g1 est2} and \eqref{g1 est1} in the range $m < 0$ in an identical manner to the proof of Lemma \ref{lem:g3 est}. To establish \eqref{g1 est1} for $m > 0$, we proceed analogously (cf. \eqref{g1 def}, \eqref{phi hat+ to Phipm def0}, \eqref{Phi++--def}, \eqref{I(m,x) def}, and \eqref{integralIleq2}), first obtaining
\begin{multline}\label{g1 hat est1}
\widehat{g_1}\left(y\right)\ll \sqrt{y}
\int_{0}^{\infty}\frac{th(t)}{\cosh(\pi t)}
\Gamma(1/4+it)\Gamma(1/4-it)\cos\left( \pi(1/4-it)\right)
\int_0^{\infty}\frac{U\left(x\right)}{\sqrt{x^2-n^4/L^2}}\\ \times
V\left(e\frac{xL+n^2}{4rn},t\right)
\left(\frac{x-n^2/L}{x+n^2/L}\right)^{1/2}{}_2F_{1}\left(\frac{1}{4}+it,\frac{1}{4}-it,\frac{1}{2};\frac{(x-n^2/L)^2}{(x+n^2/L)^2} \right)
B_0(2\sqrt{xyL})dxdt.
\end{multline}
Then, using \cite[Section~2.7.2, formulas~(7)-(9)]{BE}, we show  that the function
\begin{equation}\label{Z def}
Z_t(x)=x^{1/4}(1-x)^{1/2}{}_2F_{1}\left(\frac{1}{4}+it,\frac{1}{4}-it,\frac{1}{2};x\right)
\end{equation}
satisfies the differential equation
\begin{equation}\label{Z dif eq}
Z_t''+\left(\frac{3}{16x^2}+\frac{1}{4(1-x)^2}+\frac{3/4-4t^2}{4x(1-x)}\right)Z_t=0.
\end{equation}
Applying \eqref{Z def}, we rewrite \eqref{g1 hat est1} as
\begin{equation}\label{g1 hat est2}
\widehat{g_1}\left(y\right)\ll \sqrt{\frac{Ly}{n^2}}
\int_{0}^{\infty}\frac{t^{1/2}h(t)}{\cosh(\pi t)}\int_0^{\infty}F(x)
Z_t\left(\frac{(x-n^2/L)^2}{(x+n^2/L)^2} \right)
B_0(2\sqrt{xyL})dxdt,
\end{equation}
where
\begin{equation}\label{g1 F def}
F(x):=V\left(e\frac{xL+n^2}{4rn},t\right)\frac{U\left(x\right)(x+n^2/L)^{1/2}}{(x-n^2/L)^{1/2}\sqrt{x}}.
\end{equation}
Then, according to \eqref{Harcos est}, we have
\begin{equation}\label{g1 hat est3}
\widehat{g_1}\left(y\right)\ll \frac{\sqrt{Ly/n^2}}{(Ly)^{j/2}}
\int_{0}^{\infty}\frac{t^{1/2}h(t)}{\cosh(\pi t)}\int_0^{\infty}\frac{d^j}{dx^j}\left(F(x)
Z_t\left(\frac{(x-n^2/L)^2}{(x+n^2/L)^2} \right)\right)
x^{j/2}B_j(2\sqrt{xyL})dxdt.
\end{equation}
Once again, to estimate the derivatives of $Z_t$, we analyze the expression within the brackets in \eqref{Z dif eq}. Since the argument of the hypergeometric function in \eqref{g1 hat est3} can be bounded as in \eqref{Y arg}, the term within the brackets in \eqref{Z dif eq} is bounded by $T^{\epsilon} / (1-x)^2$. Therefore, the remainder of the proof proceeds identically to that of \cite[Lemma 8.1]{BF2momWA2026}.

\end{proof}
%%%%%%%%%%%%%%%%%%%%%%%%%%%%%%%%%%
Since the estimates in Lemma \ref{lem:g1 est} coincide with those in Lemma \ref{lem:g3 est}, we obtain the following analogue of \eqref{Sigma21 asympt3}:
\begin{multline}\label{Sigma11 asympt1}
\Sigma_{1,1}(r;h)=\frac{1}{2\sqrt{\pi}}
\sum_{e|r^2}\frac{e_2}{\sqrt{re}}\sum_{n\ll rT^{-1+\epsilon}e^{-1}}
\sum_{q|rn/e_2}\frac{\ups_q(n^2)}{\sqrt{q}}\\\times
\sum_{n^2T^{2-\epsilon}\ll L\ll nrT^{1+\epsilon}/e}\int_0^{\infty}g_1(-y)
\left(\log\frac{y}{8\pi q^2}+\frac{\pi}{2}+3\gamma\right)\frac{dy}{\sqrt{y}}+
+O\left(\frac{T^{1+\epsilon}Gr^{3/2}}{T^{5/2}}\right).
\end{multline}
Proceeding in a manner analogous to Section \ref{sec:SIN}, we obtain the following analogue of \eqref{SIN asympt2} (cf. \eqref{SZE=Sigma11+}):
\begin{equation}\label{SZE asympt2}
\SZE(r;h)=\frac{1}{2\sqrt{\pi}}
\sum_{e|r^2}\frac{e_2}{\sqrt{re}}\sum_{n=1}^{\infty}
\sum_{q|rn/e_2}\frac{\ups_q(n^2)}{\sqrt{q}}I_{<}(r,n,e)
+O\left(\frac{T^{1+\epsilon}Gr^{3/2}}{T^{5/2}}\right),
\end{equation}
where (recall \eqref{g1 def})
\begin{equation}\label{I<def}
I_{<}(r,n,e)=
\int_{n^2}^{\infty}\left(\log\frac{y+n^2}{8\pi q^2}+\frac{\pi}{2}+3\gamma\right)
I\left(e\frac{y+n^2}{4rn}, 2\frac{y-n^2}{y+n^2}\right)\frac{dy}{\sqrt{y^2-n^4}}.
\end{equation}
Similarly to the proof of \eqref{I>(r,n,e) to I>(t,z)}, we show that
\begin{equation}\label{I<(r,n,e) to I<(t,z)}
I_{<}(r,n,e)=\frac{2i}{\pi}\int_{-\infty}^{\infty}\frac{th(t)}{\cosh(\pi t)}
\frac{1}{2\pi i}\int_{(a)}\frac{L_{\infty}(1/2+z,t)}{L_{\infty}(1/2,t)}\zeta(1+2z)G(z)
\left(\frac{en}{4r}\right)^{-z}
I_{<}(t,z)\frac{dz}{z},
\end{equation}
where
\begin{equation}\label{I<(t,z) to I<0(t,z)}
I_{<}(t,z)=\left(\log\frac{n^2}{8\pi q^2}+\frac{\pi}{2}+3\gamma-\frac{\partial}{\partial z}\right)I_{<,0}(t,z),
\end{equation}
and
\begin{equation}\label{I<0(t,z)def}
I_{<,0}(t,z):=\int_{1}^{\infty}I_t\left(2\frac{x-1}{x+1}\right)\frac{dx}{(x-1)^{1/2}(x+1)^{1/2+z}}.
\end{equation}
Using \eqref{I<0(t,z)def} and \eqref{I_t(x)def}, and proceeding in an identical manner to the proof of \cite[Lemma 10.2]{BF2momWA2026}, we obtain the following analogue of \cite[(10.31)]{BF2momWA2026} for $0 < \Re(z) < \Re(1/2 + 2it)$ (cf. also Remark \ref{rem:it=k-1/2}):
\begin{multline}\label{I<0(t,z) eq0}
I_{<,0}(t,z)=\sqrt{\frac{2}{\pi}}
\frac{\Gamma^2(z)\Gamma(1/2-z+2it)}{\Gamma(1/2+z+2it)}\frac{\sin(\pi(1/2+z-2it))\sin(\pi(1/4-it))}{\sin(\pi(1/2+2it))}+\\+
\frac{\pi 2^{-3/2-z}(1-z)}{\Gamma(7/4+it)\Gamma(7/4-it)\sin(\pi(3/4-it))}
\GenHyG{3}{2}{1-z/2,3/2-z/2,1}{7/4+it,7/4-it}{1}.
\end{multline}
Finally, we wish to obtain an asymptotic formula for $I_{<,0}(t,z)$. To this end, we first note (cf. \eqref{integralIleq2} and \eqref{eq:integralI}) that for $0 < x < 2$
\begin{equation}\label{It(x)x<2}
I_t(x)=\frac{x^{1/2}}{\pi^{1/2}}
\frac{\Gamma(1/4+it)\Gamma(1/4-it)}{\Gamma(1/2)}\cos\left( \pi(1/4+it)\right)
{}_2F_{1}\left(1/4+it,1/4-it,1/2;\frac{x^2}{4} \right).
\end{equation}
Substituting \eqref{It(x)x<2} into \eqref{I<0(t,z)def}, we have
\begin{multline}\label{I<0(t,z)eq1}
I_{<,0}(t,z)=\frac{2^{1/2}}{\pi}\Gamma(1/4+it)\Gamma(1/4-it)\cos\left( \pi(1/4+it)\right)\\\times
\int_{1}^{\infty}
{}_2F_{1}\left(1/4+it,1/4-it,1/2;\frac{(x-1)^2}{(x+1)^2} \right)
\frac{dx}{(x+1)^{1+z}}.
\end{multline}
Applying the change of variables $x = \cot^2 y$, we obtain
\begin{multline}\label{I<0(t,z)eq2}
I_{<,0}(t,z)=\frac{2^{3/2}}{\pi}\Gamma(1/4+it)\Gamma(1/4-it)\cos\left( \pi(1/4+it)\right)\\\times
\int_{0}^{\pi/4}
{}_2F_{1}\left(1/4+it,1/4-it,1/2;\cos^2(2y)\right)
\frac{\cos(y)dy}{\sin^{1-2z}(y)}.
\end{multline}
Suppose that $t>0$. Using \eqref{2F1(1/4,1/4,1/2) asympt2}, we infer that
\begin{multline}\label{I<0(t,z)eq3}
I_{<,0}(t,z)=\frac{2^{3/2}}{\pi^{3/2}}\Gamma(1/4-it)\Gamma(3/4+it)\cos\left( \pi(1/4+it)\right)e^{-\pi i/4+\pi t}
\\\times
\int_{0}^{\pi/4}
K_0(4ty)\frac{\sqrt{y}\cos^{1/2}(y)dy}{\sin^{3/2-2z}(y)}\left(1+O(t^{-1})\right).
\end{multline}
Due to the exponential decay of the $K$-Bessel function, we can first truncate the integral over $y$ at the point $t^{-1+\epsilon}$. Following this truncation, we approximate $\cos y \sim 1$ and $\sin y \sim y$, which yields
\begin{equation}\label{I<0(t,z)eq4}
I_{<,0}(t,z)=\frac{2^{3/2}}{\pi^{1/2}}\frac{\cos\left( \pi(1/4+it)\right)}{\sin\left( \pi(3/4+it)\right)}e^{-\pi i/4+\pi t}
\int_{0}^{\pi/4}
K_0(4ty)y^{2z-1}dy\left(1+O\left(\frac{1+|z|}{1+t}\right)\right).
\end{equation}
Using \cite[(6.561.16)]{GR}, we obtain
\begin{equation}\label{I<0(t,z)eq5}
I_{<,0}(t,z)=\frac{\Gamma^2(z)}{\sqrt{2\pi}(2t)^{2z}}\frac{\cos\left( \pi(1/4+it)\right)}{\sin\left( \pi(3/4+it)\right)}e^{-\pi i/4+\pi t}
\left(1+O\left(\frac{1+|z|}{1+t}\right)\right).
\end{equation}
Since the right-hand side of \eqref{I<0(t,z)eq1} becomes even after removing the factor $\cos\pi(1/4+it)$, we have
\begin{multline}\label{I<0(t,z)eq6}
I_{<,0}(t,z)-I_{<,0}(-t,z)=\\=
\frac{\Gamma^2(z)}{\sqrt{2\pi}(2t)^{2z}}\frac{\cos\left( \pi(1/4+it)\right)-\cos\left( \pi(1/4-it)\right)}{\sin\left( \pi(3/4+it)\right)}e^{-\pi i/4+\pi t}
\left(1+O\left(\frac{1+|z|}{1+t}\right)\right).
\end{multline}
Then the equality
\begin{equation*}
\frac{e^{-\pi i/4}}{\sqrt{2\pi}}\frac{\cos\left( \pi(1/4+it)\right)-\cos\left( \pi(1/4-it)\right)}{\sin\left( \pi(3/4+it)\right)}=
\frac{e^{-\pi i/4}}{\sqrt{2\pi}}(1-i)(1+O(e^{-2\pi t}))=\frac{-i}{\sqrt{\pi}}(1+O(e^{-2\pi t}))
\end{equation*}
shows that
\begin{equation}\label{I<0(t,z)eq7}
I_{<,0}(t,z)-I_{<,0}(-t,z)=\frac{-i\Gamma^2(z)}{\sqrt{\pi}(2t)^{2z}}e^{\pi t}
\left(1+O\left(\frac{1+|z|}{1+t}\right)\right).
\end{equation}

%%%%%%%%%%%%%%%%%%%%%%%%%%%%%%%%%%%%%%%%%%%%%%%%%%%%%%%%%%%%%%%%%%%%%%%%%%%%%%%%%%%%%%%%%%%
\section{Proof of Theorems \ref{thm:2mom average}  and \ref{thm:nonvan}}\label{sec:Thm 2mom}
It follows from \eqref{M2 to M1 eq3}, \eqref{2mom MTn est}, \eqref{2mom MT2 eq4}, \eqref{SIN asympt2}, and \eqref{SZE asympt2} that
\begin{multline}\label{2mom AF1}
\M_2(r;h(\cdot))=\MT_2(r;h)+\SZE(r;h)+\SIN(r;h)+\\+O\left(T^{\epsilon}\max(G,T^{2/3})+\frac{T^{2+\epsilon}r^{3/2+\epsilon}}{G^{5/2}}\right),
\end{multline}
where $\MT_2(r;h)$ is given by \eqref{2mom MT2 eq4}, and
\begin{equation}\label{SIN+SZE asympt1}
\SZE(r;h)+\SIN(r;h)=\frac{1}{2\sqrt{\pi}}
\sum_{e|r^2}\frac{e_2}{\sqrt{re}}\sum_{n=1}^{\infty}
\sum_{q|rn/e_2}\frac{\ups_q(n^2)}{\sqrt{q}}\left(I_{>}(r,n,e)+I_{<}(r,n,e)\right).
\end{equation}
It remains to verify that the first three summands on the right-hand side of \eqref{2mom AF1} are equal to the main term in \eqref{2momAF0}. To this end, we must first evaluate \eqref{SIN+SZE asympt1}. We shift the line of $t$-integration in \eqref{I>(r,n,e) to I>(t,z)} and \eqref{I<(r,n,e) to I<(t,z)} to $\Im(t) = -1/4-\delta$. This enables us to move the line of $z$-integration to $\Re(z) = 1+\epsilon$ (guaranteeing the absolute convergence of the sums over $n$ and $q$) and allows the application of \eqref{I>0(t,z) eq0} and \eqref{I<0(t,z) eq0}. After evaluating the sums over $n$ and $q$, we return the line of $t$-integration to $\Im(t) = 0$. Note that substituting \eqref{I>(r,n,e) to I>(t,z)} and \eqref{I<(r,n,e) to I<(t,z)} into \eqref{SIN+SZE asympt1}, combined with \eqref{I>(t,z) to I>0(t,z)} and \eqref{I<(t,z) to I<0(t,z)}, yields a sum over $n$ and $q$ identical to that in \cite[(10.33)]{BF2momWA2026}. Therefore, applying \cite[(10.54)]{BF2momWA2026}, we obtain the following analogue of \cite[(10.55)]{BF2momWA2026}:
\begin{multline}\label{SIN+SZE asympt2}
\SZE(r;h)+\SIN(r;h)=\frac{1}{2\sqrt{\pi}}
\sum_{e|r^2}\frac{e_2}{\sqrt{re}}
\sum_{d|\frac{r}{e_2}}\sum_{m|\frac{r}{de_2}}\frac{\mu(m)}{dm}\frac{2i}{\pi}\int_{-\infty}^{\infty}\frac{th(t)}{\cosh(\pi t)}\\\times
\frac{1}{2\pi i}\int_{(a)}\frac{L_{\infty}(1/2+z,t)}{L_{\infty}(1/2,t)}G(z)
\zeta(z)\zeta(2z)\G(m,d,z,0)\left(\frac{me}{4r}\right)^{-z}
\Hf(t,z)\frac{dz}{z}.
\end{multline}
 Note that due to our choice of the function $G(z)$ in \eqref{Gdef}, the integrand in \eqref{SIN+SZE asympt2} is holomorphic at both $z=1$ and $z=1/2$. Here $a = \epsilon$, $\G(m,d,z,u)$ is given by \cite[(10.39)]{BF2momWA2026}, and
\begin{multline}\label{Hf def}
\Hf(t,z)=
\frac{\pi}{2}\left(I_{<,0}(t,z)-I_{>,0}(t,z)\right)+\\+
\Bigl(2\frac{d}{du}\log\G(m,d,z,u)\Biggl|_{u=0}-\log(8\pi)+3\gamma-2\log d-2\frac{\zeta'(z)}{\zeta(z)}-
\frac{\partial}{\partial z}\Bigr)\left(I_{<,0}(k,z)+I_{>,0}(k,z)\right).
\end{multline}
In view of \eqref{I>0(t,z)-I>0(-t,z)} and \eqref{I<0(t,z)eq7}, we rewrite \eqref{SIN+SZE asympt2} as
\begin{multline}\label{SIN+SZE asympt3}
\SZE(r;h)+\SIN(r;h)=
\sum_{e|r^2}\frac{e_2}{\sqrt{re}}
\sum_{d|\frac{r}{e_2}}\sum_{m|\frac{r}{de_2}}\frac{\mu(m)}{dm}\frac{2i}{\pi}\int_{0}^{\infty}\frac{th(t)}{\cosh(\pi t)}\\\times
\frac{1}{2\pi i}\int_{(a)}\frac{L_{\infty}(1/2+z,t)}{L_{\infty}(1/2,t)}G(z)
\zeta(z)\zeta(2z)\G(m,d,z,0)\left(\frac{me}{4r}\right)^{-z}
\left(\Hf(t,z)-\Hf(-t,z)\right)\frac{dz}{z}.
\end{multline}
Using \eqref{I>0(t,z)-I>0(-t,z)}, \eqref{I<0(t,z)eq7}, and \cite[(10.74)]{BF2momWA2026}, we  obtain the following analogue of \cite[(10.76),(10.77)]{BF2momWA2026}:
\begin{equation}\label{Hf to Hf2}
\Hf(t,z)-\Hf(-t,z)=
\frac{-i\Gamma^2(z)}{\sqrt{\pi}(2t)^{2z}}e^{\pi t}
\left(1+\sin(\pi z)+\cos(\pi z)\right)\Hf_2(k,z)\left(1+O\left(\frac{1+|z|}{1+t}\right)\right),
\end{equation}
where
\begin{multline}\label{Hf2 def}
\Hf_2(t,z)=
\frac{\pi}{2}\frac{1+\sin(\pi z)-3\cos(\pi z)}{1+\sin(\pi z)+\cos(\pi z)}+
2\frac{d}{du}\log\G(m,d,z,u)\Biggl|_{u=0}-\log(8\pi)+\\+3\gamma-2\log d+2\frac{\zeta'(1-z)}{\zeta(1-z)}-2\log(\pi)+
\psi\left(\frac{z}{2}\right)+\psi\left(\frac{1-z}{2}\right)-2\psi\left(z\right)+2\log(2t).
\end{multline}
Applying \cite[(10.78)]{BF2momWA2026} and the elementary identity
\begin{equation*}
\frac{1+\sin(\pi z)-3\cos(\pi z)}{1+\sin(\pi z)+\cos(\pi z)}-2\frac{\sin(\pi z/2)}{\cos(\pi z/2)}=-1,
\end{equation*}
we obtain
\begin{multline}\label{Hf2 def2}
\Hf_2(t,z)=
\log\frac{t^2}{8\pi^3d^2}+2\frac{d}{du}\log\G(m,d,z,u)\Biggl|_{u=0}+3\gamma+2\frac{\zeta'(1-z)}{\zeta(1-z)}-\frac{\pi}{2}=\\=
\Cc_1(-z)-2\log d+2\frac{d}{du}\log\G(m,d,z,u)\Biggl|_{u=0},
\end{multline}
where $\Cc_1(z)$ is defined in \eqref{C1 def}.
Using \eqref{Vapproximation}, \eqref{Hf to Hf2}, and \eqref{Hf2 def2}, we rewrite \eqref{SIN+SZE asympt3} as follows:
\begin{multline}\label{SIN+SZE asympt4}
\SZE(r;h)+\SIN(r;h)=\frac{1}{\pi^2}
\sum_{e|r^2}\frac{e_2}{\sqrt{re}}
\sum_{d|\frac{r}{e_2}}\sum_{m|\frac{r}{de_2}}\frac{\mu(m)}{dm}\frac{1}{\pi^2}\int_{0}^{\infty}\frac{th(t)e^{\pi t}}{\cosh(\pi t)}\\\times
\frac{1}{2\pi i}\int_{(a)}G(z)
\pi^{-3z/2}\zeta(z)\zeta(2z)\frac{\Gamma\left(1/4+z/2\right)}{\Gamma\left(1/4\right)}\Gamma^2(z)
\left(1+\sin(\pi z)+\cos(\pi z)\right)\\\times
\G(m,d,z,0)\left(\frac{tme}{r}\right)^{-z}\Hf_2(t,z)\left(1+O\left(\frac{|z|}{1+|t|}\right)\right)
\frac{dz}{z}.
\end{multline}
Following the computational steps detailed after \cite[(10.80)]{BF2momWA2026}, we obtain
\begin{multline}\label{zetazeta eq1}
\zeta(z)\zeta(2z)\Gamma^2(z)\Gamma\left(1/4+z/2\right)=\\
=\frac{\pi^{3z}\zeta(1-z)\zeta(1-2z)\Gamma\left(1/4-z/2\right)}{1+\cos(\pi z)-\sin(\pi z)}
\frac{\Gamma\left(3/4-z/2\right)\Gamma\left(1/4+z/2\right)}{\Gamma\left(1/4-z/2\right)\Gamma\left(3/4+z/2\right)}=\\
=\frac{\pi^{3z}\zeta(1-z)\zeta(1-2z)\Gamma\left(1/4-z/2\right)}{1+\cos(\pi z)-\sin(\pi z)}
\frac{\sin\left(\pi(1/4-z/2)\right)}{\sin\left(\pi(1/4+z/2)\right)}=\\=
\frac{\pi^{3z}\zeta(1-z)\zeta(1-2z)\Gamma\left(1/4-z/2\right)}{1+\cos(\pi z)+\sin(\pi z)}.
\end{multline}
This allows us to rewrite \eqref{SIN+SZE asympt4} in the form
\begin{multline}\label{SIN+SZE asympt5}
\SZE(r;h)+\SIN(r;h)=
\sum_{e|r^2}\frac{e_2}{\sqrt{re}}
\sum_{d|\frac{r}{e_2}}\sum_{m|\frac{r}{de_2}}\frac{\mu(m)}{dm}\frac{2}{\pi^2}\int_{0}^{\infty}th(t)
\frac{1}{2\pi i}\int_{(a)}G(z)
\pi^{3z/2}\zeta(1-z)\zeta(1-2z)\\\times\frac{\Gamma\left(1/4-z/2\right)}{\Gamma\left(1/4\right)}
\G(m,d,z,0)\left(\frac{tme}{r}\right)^{-z}\Hf_2(t,z)\left(1+O\left(\frac{1+|z|}{1+|t|}\right)\right)
\frac{dz}{z}.
\end{multline}
Utilizing the notation introduced in \cite[(10.82)]{BF2momWA2026},
\begin{multline}\label{SIN+SZE asympt6}
\SZE(r;h)+\SIN(r;h)=\frac{2}{\pi^2\sqrt{r}}
\int_{0}^{\infty}th(t)
\frac{1}{2\pi i}\int_{(a)}G(z)
\pi^{3z/2}\zeta(1-z)\zeta(1-2z)\\\times\frac{\Gamma\left(1/4-z/2\right)}{\Gamma\left(1/4\right)}
\ES(r,k,z)\left(1+O\left(\frac{1+|z|}{1+|t|}\right)\right)
\frac{dz}{z}.
\end{multline}
Applying \cite[(10.109)]{BF2momWA2026}, we obtain
\begin{multline}\label{SIN+SZE asympt7}
\SZE(r;h)+\SIN(r;h)=\frac{2}{\pi^2\sqrt{r}}
\int_{0}^{\infty}th(t)
\frac{1}{2\pi i}\int_{(a)}G(z)
\pi^{3z/2}\zeta(1-z)\zeta(1-2z)\\\times\frac{\Gamma\left(1/4-z/2\right)}{\Gamma\left(1/4\right)}
\MAP(-z,r)\left(\frac{t}{r}\right)^{-z}\left(1+O\left(\frac{1+|z|}{1+|t|}\right)\right)
\frac{dz}{z},
\end{multline}
where $\MAP(z,r)$ is from \eqref{MT sum e def}. Now it follows from \eqref{2mom MT2 eq4} and \eqref{SIN+SZE asympt7} that
\begin{multline}\label{MT2+SIN+SZE asympt1}
\MT_2(r;h)+\SZE(r;h)+\SIN(r;h)=\frac{2}{\pi^2\sqrt{r}}
\int_{0}^{\infty}th(t)
\frac{1}{2\pi i}\int_{(a)}
\left(H(z)+H(-z)\right)\frac{dz}{z}+\\+O\left(\frac{G(rT)^{\epsilon}}{\sqrt{r}}\right),
\end{multline}
where
\begin{equation}\label{MT2+SZE+SIN Hdef}
H(z)=\pi^{-3z/2}G(z)\frac{\Gamma\left(1/4+z/2\right)}{\Gamma\left(1/4\right)} \zeta(1+2z)\zeta(1+z)\MAP(z,r)\left(\frac{t}{r}\right)^{z}.
\end{equation}
Since the integral in \eqref{MT2+SIN+SZE asympt1} is equal to half of the residue at $z = 0$, we obtain, as in \cite[(11.3)]{BF2momWA2026}, that
\begin{equation}\label{MT2+SIN+SZE asympt2}
\MT_2(r;h)+\SZE(r;h)+\SIN(r;h)=\frac{2}{\pi^2\sqrt{r}}
\int_{0}^{\infty}th(t)\res_{z=0}\frac{H(z)}{z}dt+O\left(\frac{G(rT)^{\epsilon}}{\sqrt{r}}\right).
\end{equation}
Substituting \eqref{MT2+SIN+SZE asympt2} into \eqref{2mom AF1} yields
\begin{equation}\label{2mom AF2}
\M_2(r;h(\cdot))=\frac{2}{\pi^2\sqrt{r}}\int_{0}^{\infty}th(t)\res_{z=0}\frac{H(z)}{z}dt
+O\left(T^{\epsilon}\max(G,T^{2/3})+\frac{T^{2+\epsilon}r^{3/2+\epsilon}}{G^{5/2}}\right).
\end{equation}
It remains to evaluate the residue in \eqref{2mom AF2}. This evaluation can be carried out in an identical manner to the analysis at the end of \cite[Section 11]{BF2momWA2026}, since the only differences are as follows: compared to \cite[(11.2)]{BF2momWA2026}, our expression \eqref{MT2+SZE+SIN Hdef} contains an additional factor of $G(z)$, the argument of the Gamma function in \eqref{MT2+SZE+SIN Hdef} involves $1/4$ instead of $3/4$, and the constant in \eqref{C1 def} contains $-\pi/2$ instead of $\pi/2$. These modifications do not alter the structural form of the main term; they merely shift the coefficients of the underlying polynomials. Consequently, this establishes \eqref{2momAF0}.

The proof of Theorem \ref{thm:nonvan} proceeds exactly as the proof of \cite[Theorem 1.1]{BF2mom} and relies on Theorem \ref{main thm 3mom} (cf. \cite[Section 6]{BF2mom}). The admissibility condition on $\beta$ arises from the requirement that the error term in \eqref{2momAF0} for $r = T^{2\Delta}$ must be strictly smaller than the main term; that is,

$$\frac{T^{2+\epsilon}r^{3/2}}{G^{5/2}}=T^{2+3\Delta-5\beta/2+\epsilon}<\frac{TG}{\sqrt{r}}=T^{1+\beta-\Delta}.$$
This yields $4\Delta<7\beta/2-1$.

%%%%%%%%%%%%%%%%%%%%%%%%%%%%%%%%%%%%%%%%%%%%%%%%%%%%%%%%%%%%%%%%%%%%%%%%%%%%%%%%%%%%%%%%%%%
\section{Proof of Theorem \ref{main thm 3mom}}\label{sec:3mom}
To prove Theorem \ref{main thm 3mom}, we represent one of the three $L$-functions using the approximate functional equation \eqref{approx.func.eq.}, which yields
\begin{equation}\label{M3 to M2}
\M_3(1,1/2)=2\sum_{r\ll T^{1+\epsilon}}\frac{1}{r^{1/2}}\M_2(r;h(\cdot)V(r,\cdot))+O(T^{-A}),
\end{equation}
where $\M_2(m;\cdot)$  is given by \eqref{2mom def}.  Next, we substitute \eqref{2mom AF2} into \eqref{M3 to M2}. 
Summing the error terms, we find that the total contribution is bounded by
\begin{equation}\label{M3 to M2 error}
\sum_{r\ll T^{1+\epsilon}}\frac{1}{r^{1/2}}\left(T^{\epsilon}\max(G,T^{2/3})+\frac{T^{2+\epsilon}r^{3/2+\epsilon}}{G^{5/2}}\right)\ll
\frac{T^{4+\epsilon}}{G^{5/2}}+T^{1/2+\epsilon}\max(G,T^{2/3})\ll\frac{T^{4+\epsilon}}{G^{5/2}}
\end{equation}
for $G\ll T^{1-\epsilon}$. Therefore,
\begin{equation}\label{M3 to M2 eq2}
\M_3(1,1/2)=\MT_3(1;h)+O\left(\frac{T^{4+\epsilon}}{G^{5/2}}\right),
\end{equation}
where
\begin{equation}\label{MT3 def}
\MT_3(1;h)=\frac{4}{\pi^2}\sum_{r=1}^{\infty}\frac{1}{r}
\int_{0}^{\infty}th(t)V(r,t)\res_{z=0}\frac{H(z)}{z}dt.
\end{equation}
We do not compute the precise form of the polynomial in \eqref{3momAF0}, but merely show that its degree is precisely six. To evaluate the main term, we express the residue in \eqref{MT3 def} via Cauchy's integral formula and utilize \eqref{MT2+SZE+SIN Hdef}, \eqref{MT sum e def}, and \eqref{C1 def}, which yields
\begin{multline}\label{MT3 eq1}
\MT_3(1;h)=\frac{4}{\pi^2}\int_{0}^{\infty}th(t)
\sum_{e_1=1}^{\infty}\sum_{e_2=1}^{\infty}\frac{|\mu(e_1)|}{e_1^{3/2}e_2}
\sum_{r=1}^{\infty}\frac{V(re_1e_2,t)}{r}\frac{1}{2\pi i}\int_{C_{\epsilon}}\pi^{-3z/2}G(z)\\\times
\frac{\Gamma\left(1/4+z/2\right)}{\Gamma\left(1/4\right)}\left(\frac{t}{e_1e_2}\right)^{z}
\Biggl( 2\zeta(1+2z)\zeta'(1+z)+\\+\zeta(1+2z)\zeta(1+z)\left(2\log(t)-\log(8\pi^3)+3\gamma-\frac{\pi}{2}-2\log(re_1)\right)\Biggr)\frac{dz}{z}dt,
\end{multline}
where $C_{\epsilon}$ is a circle of radius $\epsilon$ with the center at $z=0$. Let
\begin{equation}\label{GV def}
G_V(z):=\pi^{-3z/2}G(z)\frac{\Gamma\left(1/4+z/2\right)}{\Gamma\left(1/4\right)},\quad
\Cc_2(t):=\log\frac{t^2}{8\pi^3}+3\gamma-\frac{\pi}{2}.
\end{equation}
Substituting \eqref{Vapproximation} into \eqref{MT3 eq1}, we obtain, up to an error term of $O(G^{1+\epsilon})$ that is strictly smaller than the one in \eqref{M3 to M2 eq2},
\begin{multline}\label{MT3 eq2}
\MT_3(1;h)=\frac{4}{\pi^2}\int_{0}^{\infty}th(t)
\frac{1}{2\pi i}\int_{(a)}\frac{t^v}{v}G_V(v)
\zeta(1+2v)\frac{1}{2\pi i}\int_{C_{\epsilon}}\frac{t^z}{z}G_V(z)\\\times
\Biggl(\left(\zeta(1+2z)\zeta(1+z)\Cc_2(t)+ 2\zeta(1+2z)\zeta'(1+z)\right)
\sum_{e_1,e_2,r=1}^{\infty}\frac{|\mu(e_1)|}{e_1^{3/2+v+z}e_2^{1+v+z}r^{1+v}}+\\
+2\zeta(1+2z)\zeta(1+z)\frac{d}{du}\sum_{e_1,e_2,r=1}^{\infty}\frac{|\mu(e_1)|}{e_1^{3/2+v+z+u}e_2^{1+v+z}r^{1+v+u}}\Biggl|_{u=0}
\Biggr)dvdzdt.
\end{multline}
Note that
\begin{equation}\label{e1e2r sum1}
\sum_{e_1,e_2,r=1}^{\infty}\frac{|\mu(e_1)|}{e_1^{3/2+v+z}e_2^{1+v+z}r^{1+v}}=\zeta(1+v+z)\zeta(1+v)\frac{\zeta(3/2+v+z)}{\zeta(3+2v+2z)},
\end{equation}
\begin{equation}\label{e1e2r sum2}
\frac{d}{du}\sum_{e_1,e_2,r=1}^{\infty}\frac{|\mu(e_1)|}{e_1^{3/2+v+z+u}e_2^{1+v+z}r^{1+v+u}}\Biggl|_{u=0}=
\zeta(1+v+z)\frac{d}{du}\zeta(1+v+u)\frac{\zeta(3/2+v+z+u)}{\zeta(3+2v+2z+2u)}\Biggl|_{u=0}.
\end{equation}
Substituting \eqref{e1e2r sum1}, \eqref{e1e2r sum2} into \eqref{MT3 eq2}, and evaluating the $u$-derivative, we find that
\begin{multline}\label{MT3 eq3}
\MT_3(1;h)=\frac{4}{\pi^2}\int_{0}^{\infty}th(t)
\frac{1}{(2\pi i)^2}\int_{C_{\epsilon}}\frac{t^z}{z}G_V(z)\int_{(a)}\frac{t^v}{v}G_V(v)
\zeta(1+2v)\\\times
\Biggl(
2\zeta(1+2z)\zeta'(1+z)\zeta(1+v+z)\zeta(1+v)\frac{\zeta(3/2+v+z)}{\zeta(3+2v+2z)}+\\+
\Cc_2(t)\zeta(1+2z)\zeta(1+z)\zeta(1+v+z)\zeta(1+v)\frac{\zeta(3/2+v+z)}{\zeta(3+2v+2z)}+\\
+2\zeta(1+2z)\zeta(1+z)\zeta(1+v+z)\Bigl(
\zeta(1+v)\frac{\zeta'(3/2+v+z)}{\zeta(3+2v+2z)}-\\-2\zeta(1+v)\zeta'(3+2v+2z)\frac{\zeta(3/2+v+z)}{\zeta^2(3+2v+2z)}+
\zeta'(1+v)\frac{\zeta(3/2+v+z)}{\zeta(3+2v+2z)}
\Bigr)
\Biggr)dvdzdt.
\end{multline}
In the $v$-integral, we shift the line of integration to $\Re(v) = -1/2+\epsilon$, crossing the poles at $v = 0$ and $v = -z$. Bounding the new integral trivially, we obtain a contribution of $O(T^{1/2+\epsilon}G)$, which is dominated by the error term in \eqref{M3 to M2 eq2}. The residue at $v = -z$ then gives rise to the following expression:
\begin{multline}\label{MT3 res v=-z}
-\frac{4}{\pi^2}\int_{0}^{\infty}th(t)
\frac{1}{2\pi i}\int_{C_{\epsilon}}\frac{1}{z^2}G^2_V(z)\zeta(1-2z)
\Biggl(2\zeta(1+2z)\zeta'(1+z)\zeta(1-z)\frac{\zeta(3/2)}{\zeta(3)}+\\+
\Cc_2(t)\zeta(1+2z)\zeta(1+z)\zeta(1-z)\frac{\zeta(3/2)}{\zeta(3)}+
2\zeta(1+2z)\zeta(1+z)\Bigl(
\zeta(1-z)\frac{\zeta'(3/2)}{\zeta(3)}-\\-2\zeta(1-z)\zeta'(3)\frac{\zeta(3/2)}{\zeta^2(3)}+
\zeta'(1-z)\frac{\zeta(3/2)}{\zeta(3)}
\Bigr)
\Biggr)dzdt=\frac{4}{\pi^2}\int_{0}^{\infty}th(t)P_1(\log t)dt,
\end{multline}
where the final assertion follows from the fact that within the $z$-integrand in \eqref{MT3 res v=-z}, there are no powers of $t^z$, and the entire $t$-dependence is contained within $\Cc_2(t)$, which is a polynomial in $\log t$ of degree one (cf. \eqref{GV def}). It remains to evaluate the residue at $v = 0$ of the integrand in \eqref{MT3 eq3}. Let
\begin{equation}\label{F1 def}
F_1(z,v)=t^vG_V(v)\zeta(1+v+z)\frac{\zeta(3/2+v+z)}{\zeta(3+2v+2z)},
\end{equation}
\begin{equation}\label{F2 def}
F_2(z,v)=t^vG_V(v)\zeta(1+v+z)\left(
\frac{\zeta'(3/2+v+z)}{\zeta(3+2v+2z)}-2\zeta'(3+2v+2z)\frac{\zeta(3/2+v+z)}{\zeta^2(3+2v+2z)}
\right).
\end{equation}
We have
\begin{equation}\label{zetazeta/v}
\frac{\zeta(1+v)\zeta(1+2v)}{v}=\frac{1}{2v^3}+\frac{3\gamma}{2v^2}+\frac{\gamma^2-5\gamma_1/2}{v}+O(1),
\end{equation}
\begin{equation}\label{zetazeta'/v}
\frac{\zeta'(1+v)\zeta(1+2v)}{v}=
-\frac{1}{2v^4}-\frac{\gamma}{v^3}+\frac{3\gamma_1}{2v^2}-\frac{\gamma\gamma_1+3\gamma_2/2}{v}+O(1),
\end{equation}
Using \eqref{F1 def} and \eqref{zetazeta/v}, we obtain
\begin{multline}\label{Residue1 v=0}
\res_{v=0}\frac{\zeta(1+2v)\zeta(1+v)}{v}G_V(v)t^v\zeta(1+v+z)\frac{\zeta(3/2+v+z)}{\zeta(3+2v+2z)}=\\=
\res_{v=0}\frac{\zeta(1+2v)\zeta(1+v)}{v}F_1(z,v)=
\frac{1}{4}F_1''(z,0)+\frac{3\gamma}{2}F_1'(z,0)+(\gamma^2-5\gamma_1/2)F_1(z,0).
\end{multline}
Applying \eqref{F2 def} and \eqref{zetazeta/v}, we prove that
\begin{multline}\label{Residue3 v=0}
\res_{v=0}\frac{\zeta(1+v)\zeta(1+2v)}{v}t^vG_V(v)\zeta(1+v+z)\\\times
\left(\frac{\zeta'(3/2+v+z)}{\zeta(3+2v+2z)}-2\zeta'(3+2v+2z)\frac{\zeta(3/2+v+z)}{\zeta^2(3+2v+2z)}\right)=
\res_{v=0}\frac{\zeta(1+v)\zeta(1+2v)}{v}F_2(z,v)=\\=
\frac{1}{4}F_2''(z,0)+\frac{3\gamma}{2}F_2'(z,0)+(\gamma^2-5\gamma_1/2)F_2(z,0).
\end{multline}
Combining \eqref{F1 def} and \eqref{zetazeta'/v} yields
\begin{multline}\label{Residue4 v=0}
\res_{v=0}\frac{\zeta'(1+v)\zeta(1+2v)}{v}t^vG_V(v)\zeta(1+v+z)\frac{\zeta(3/2+v+z)}{\zeta(3+2v+2z)}=\\=
\res_{v=0}\frac{\zeta'(1+v)\zeta(1+2v)}{v}F_1(z,v)=
\frac{-1}{12}F_1'''(z,0)-\frac{\gamma}{2}F_1''(z,0)+\\+\frac{3\gamma_1}{2}F_1'(z,0)-(\gamma\gamma_1+3\gamma_2/2)F_1(z,0).
\end{multline}
It follows from \eqref{MT3 eq3}, \eqref{Residue1 v=0}, \eqref{Residue3 v=0}, and \eqref{Residue4 v=0} that the contribution of the residue at $v=0$ to the main term is
\begin{multline}\label{MT3 res v=0 eq1}
\frac{4}{\pi^2}\int_{0}^{\infty}th(t)
\frac{1}{2\pi i}\int_{C_{\epsilon}}\frac{t^z}{z}G_V(z)
\Biggl(
\left(2\zeta(1+2z)\zeta'(1+z)+\Cc_2(t)\zeta(1+2z)\zeta(1+z)\right)\\\times
\left(\frac{1}{4}F_1''(z,0)+\frac{3\gamma}{2}F_1'(z,0)+(\gamma^2-5\gamma_1/2)F_1(z,0)\right)+
2\zeta(1+2z)\zeta(1+z)\\\times\Bigl(
\frac{1}{4}F_2''(z,0)+\frac{3\gamma}{2}F_2'(z,0)+(\gamma^2-5\gamma_1/2)F_2(z,0)-
\frac{1}{12}F_1'''(z,0)-\frac{\gamma}{2}F_1''(z,0)+\\+
\frac{3\gamma_1}{2}F_1'(z,0)-(\gamma\gamma_1+3\gamma_2/2)F_1(z,0)\Bigr)
\Biggr)dzdt.
\end{multline}
It remains to show that the residue at $z = 0$ of the integrand in \eqref{MT3 res v=0 eq1} is equal to $P_6(\log t)$. To simplify the presentation, we consider only the summands in \eqref{MT3 res v=0 eq1} that give rise to the leading power of $\log t$. In doing so (cf. \eqref{F1 def}), we can replace
\begin{equation}\label{F1 simpl1.1}
\frac{1}{4}F_1''(z,0)+\frac{3\gamma}{2}F_1'(z,0)+(\gamma^2-5\gamma_1/2)F_1(z,0)
\end{equation}
by
\begin{equation}\label{F1 simpl1.2}
\left(\zeta''(1+z)+\zeta(1+z)\log^2t+2\zeta'(1+z)\log t\right)\frac{\zeta(3/2+z)}{4\zeta(3+2z)}.
\end{equation}
Based on \eqref{F2 def}, we can replace
\begin{equation}\label{F2 simpl1}
\frac{1}{4}F_2''(z,0)+\frac{3\gamma}{2}F_2'(z,0)+(\gamma^2-5\gamma_1/2)F_2(z,0)
\end{equation}
by
\begin{equation}\label{F2 simpl2}
\frac{1}{4}\left(\zeta''(1+z)+\zeta(1+z)\log^2t+2\zeta'(1+z)\log t\right)
\left(\frac{\zeta'(3/2+z)}{\zeta(3+2z)}-2\zeta'(3+2z)\frac{\zeta(3/2+z)}{\zeta^2(3+2z)}\right).
\end{equation}
Finally, using \eqref{F1 def}, we can replace
\begin{equation}\label{F1 simpl2.1}
-\frac{1}{12}F_1'''(z,0)-\frac{\gamma}{2}F_1''(z,0)+\\+
\frac{3\gamma_1}{2}F_1'(z,0)-(\gamma\gamma_1+3\gamma_2/2)F_1(z,0)
\end{equation}
by
\begin{equation}\label{F1 simpl2.2}
-\left(\zeta'''(1+z)+3\zeta''(1+z)\log t+3\zeta'(1+z)\log^2t+\zeta(1+z)\log^3t\right)\frac{\zeta(3/2+z)}{12\zeta(3+2z)}.
\end{equation}
As a result, transforming \eqref{MT3 res v=0 eq1} in this manner and taking into account \eqref{MT3 res v=-z}, we obtain (cf. \eqref{MT3 eq3})
\begin{multline}\label{MT3 res v=0 eq2}
\MT_3(1;h)=\frac{4}{\pi^2}\int_{0}^{\infty}th(t)
\frac{1}{2\pi i}\int_{C_{\epsilon}}\frac{t^z}{z}G_V(z)
\Biggl(
\left(2\zeta(1+2z)\zeta'(1+z)+\Cc_2(t)\zeta(1+2z)\zeta(1+z)\right)\\\times
\left(\zeta''(1+z)+\zeta(1+z)\log^2t+2\zeta'(1+z)\log t\right)\frac{\zeta(3/2+z)}{4\zeta(3+2z)}+
2\zeta(1+2z)\zeta(1+z)\\\times\Bigl(
\frac{1}{4}\left(\zeta''(1+z)+\zeta(1+z)\log^2t+2\zeta'(1+z)\log t\right)
\left(\frac{\zeta'(3/2+z)}{\zeta(3+2z)}-2\zeta'(3+2z)\frac{\zeta(3/2+z)}{\zeta^2(3+2z)}\right)-\\
-\left(\zeta'''(1+z)+3\zeta''(1+z)\log t+3\zeta'(1+z)\log^2t+\zeta(1+z)\log^3t\right)\frac{\zeta(3/2+z)}{12\zeta(3+2z)}\Bigr)
\Biggr)dzdt+\\+
\frac{4}{\pi^2}\int_{0}^{\infty}th(t)P_5(\log t)dt+O(T^{1/2+\epsilon}G).
\end{multline}
It remains to compute the residue at the point $z = 0$. Since we are seeking only the leading power of $\log t$, we can replace $\Cc_2(t)$ by $2 \log t$ (cf. \eqref{GV def}). Furthermore, this term dominates the expression containing the brackets, i.e.
$$\left(\frac{\zeta'(3/2+z)}{\zeta(3+2z)}-2\zeta'(3+2z)\frac{\zeta(3/2+z)}{\zeta^2(3+2z)}\right).$$
Accordingly,
\begin{multline}\label{MT3 res v=0 eq3}
\MT_3(1;h)=\frac{2}{\pi^2}\int_{0}^{\infty}th(t)
\frac{1}{2\pi i}\int_{C_{\epsilon}}\frac{t^z}{z}G_V(z)
\Biggl(
\left(\zeta(1+2z)\zeta'(1+z)+\zeta(1+2z)\zeta(1+z)\log(t)\right)\\\times
\left(\zeta''(1+z)+\zeta(1+z)\log^2t+2\zeta'(1+z)\log t\right)\frac{\zeta(3/2+z)}{\zeta(3+2z)}-
\zeta(1+2z)\zeta(1+z)\\\times
\left(\zeta'''(1+z)+3\zeta''(1+z)\log t+3\zeta'(1+z)\log^2t+\zeta(1+z)\log^3t\right)\frac{\zeta(3/2+z)}{3\zeta(3+2z)}
\Biggr)dzdt+\\+
\frac{4}{\pi^2}\int_{0}^{\infty}th(t)P_5(\log t)dt+O(T^{1/2+\epsilon}G).
\end{multline}
A straightforward calculation leads to
\begin{multline}\label{MT3 res z=0 eq1}
\frac{1}{z}
\left(\zeta(1+2z)\zeta'(1+z)+\zeta(1+2z)\zeta(1+z)\log(t)\right)\\\times
\left(\zeta''(1+z)+\zeta(1+z)\log^2t+2\zeta'(1+z)\log t\right)
=\sum_{j=1}^{7}\frac{a_{-j}}{z^j}+O(1),
\end{multline}
\begin{multline}\label{MT3 res z=0 eq2}
\frac{-1}{3z}\zeta(1+2z)\zeta(1+z)
\left(\zeta'''(1+z)+3\zeta''(1+z)\log t+3\zeta'(1+z)\log^2t+\zeta(1+z)\log^3t\right)
=\\=\sum_{j=1}^{7}\frac{b_{-j}}{z^j}+O(1),
\end{multline}
where
\begin{equation}\label{MT3 res z=0 eq1a}
a_{-7}=-1,\,a_{-6}=2\log t-2\gamma,\, a_{-5}=-\frac{3}{2}\log^2t+P_1(\log t),\,
a_{-4}=\frac{\log^3t}{2}+P_2(\log t),
\end{equation}
\begin{equation*}
a_{-3},a_{-2},a_{-1}\ll\log^3 t,
\end{equation*}
\begin{equation}\label{MT3 res z=0 eq2b}
b_{-7}=1,\,b_{-6}=-\log t+3\gamma,\, b_{-5}=\frac{\log^2t}{2}+P_1(\log t),\,
b_{-4}=\frac{-\log^3t}{6}+P_2(\log t),
\end{equation}
\begin{equation*}
b_{-3},b_{-2},b_{-1}\ll\log^3 t.
\end{equation*}
Therefore, \eqref{MT3 res v=0 eq3} is equal to
\begin{multline}\label{MT3 res v=0 eq4}
\MT_3(1;h)=\frac{2}{\pi^2}\int_{0}^{\infty}th(t)
\res_{z=0}\Biggl(t^zG_V(z)\frac{\zeta(3/2+z)}{\zeta(3+2z)}\\\times
\Biggl(
\frac{\log t}{z^6}-\frac{\log^2 t}{z^5}+\frac{\log^3t}{3z^4}+\sum_{j=1}^{3}\frac{p_{3,j}(\log t)}{z^j}+O(1)
\Biggr)
\Biggr)dt+\\+
\int_{0}^{\infty}th(t)P_5(\log t)dt+O(T^{1/2+\epsilon}G).
\end{multline}
Expanding $t^z = 1 + \sum_{j=1}^5 \frac{\log^j t}{j!} z^j + O(z^6)$, and noting that $G_V(0) = 1$, we obtain
\begin{multline}\label{MT3 res v=0 eq5}
\MT_3(1;h)=\frac{2}{\pi^2}\frac{\zeta(3/2)}{\zeta(3)}\int_{0}^{\infty}th(t)
\left(\frac{1}{5!}-\frac{1}{4!}+\frac{1}{3\cdot3!}\right)\log^6tdt+
\int_{0}^{\infty}th(t)P_5(\log t)dt=\\=
\frac{2}{45\pi^2}\frac{\zeta(3/2)}{\zeta(3)}\int_{0}^{\infty}th(t)\left(\log^6t+P_5(\log t)\right)dt+O(T^{1/2+\epsilon}G).
\end{multline}
Substituting \eqref{MT3 res v=0 eq5} into \eqref{M3 to M2 eq2} yields \eqref{3momAF0}, which completes the proof of Theorem \ref{main thm 3mom}.

%%%%%%%%%%%%%%%%%%%%%%%%%%%%%%%%%%%%%%%%%%%%%%%%%%%%%%%%%%%%%%%%%%%%%%%%%%%%%%%%%%%%%%%%%%%

%%%%%%%%%%%%%%%%%%%%%%%%%%%%%%%%%%%%%%%%%%%%%%%%%%%%%%%%%%%%%%%%%%%%%%%%%%%%%%%%%%%%%%%%%%%

%%%%%%%%%%%%%%%%%%%%%%%%%%%%%%%%%%%%%%%%%%%%%%%%%%%%%%%%%%%%%%%%%%%%%%%%%%%%%%%%%%%%%%%%%%%

%%%%%%%%%%%%%%%%%%%%%%%%%%%%%%%%%%%%%%%%%%%%%%%%%%%%%%%%%%%%%%%%%%%%%%%%%%%%%%%%%%%%%%%%%%%

\end{document}